\def\ps@pprintTitle{%
 \let\@oddhead\@empty
 \let\@evenhead\@empty
 \def\@oddfoot{\centerline{\thepage}}%
 \let\@evenfoot\@oddfoot}
\newsavebox{\foobox}
\newcolumntype{M}[1]{>{\centering\arraybackslash}m{#1}}
\numberwithin{equation}{section}
\theoremstyle{plain}
\newtheorem{thm}{\protect\theoremname}[section]
\theoremstyle{plain*}
\newtheorem*{thm*}{\protect\theoremname}
\theoremstyle{plain}
\newtheorem{lem}[thm]{\protect\lemmaname}
\theoremstyle{plain*}
\newtheorem*{lem*}{\protect\lemmaname}  
  \theoremstyle{plain}
  \newtheorem{prop}[thm]{\protect\propositionname}
    \theoremstyle{plain*}
  \newtheorem*{prop*}{\protect\propositionname}
\theoremstyle{remark}
\theoremstyle{remark*}
\newtheorem*{question*}{Question} 
\theoremstyle{remark}
\newtheorem{rem}[thm]{\protect\remarkname}
\theoremstyle{remark*}
\newtheorem*{rem*}{\protect\remarkname}
\theoremstyle{remark}
\newtheorem{example}[thm]{Example}
\theoremstyle{remark*}
\newtheorem*{example*}{\protect\examplename}
\theoremstyle{plain}
\newtheorem{cor}[thm]{\protect\corollaryname}
\providecommand{\corollaryname}{Corollary}
\theoremstyle{definition}
\newtheorem{defn}[thm]{Definition}  
\theoremstyle{plain} 
\newcommand{\thistheoremname}{}
\newtheorem{genericthm}[thm]{\thistheoremname}
\newtheorem*{genericthm*}{\thistheoremname}
\newenvironment{namedthm*}[1]
  {\renewcommand{\thistheoremname}{#1}%
   \begin{genericthm*}}
  {\end{genericthm*}}
 \providecommand{\lemmaname}{Lemma}
  \providecommand{\propositionname}{Proposition}
  \providecommand{\remarkname}{Remark}
\providecommand{\theoremname}{Theorem}
\newcommand{\rlim}[1]{\mathop { \mathcal {R}\text{\text{\rm{-lim}}}}_{#1}}
\newcommand{\N}{\mathbb{N}}
\newcommand{\Z}{\mathbb{Z}}
\title{Strongly mixing systems are almost strongly mixing of all orders}
\date{}
\author{V.\ Bergelson \and R.\ Zelada}
\begin{document}
\maketitle
\begin{abstract}
We prove that any strongly mixing action of a countable abelian group on a probability space has higher order mixing properties. This is achieved via introducing and utilizing $\mathcal R$-limits,  a notion of convergence which is based on the classical Ramsey Theorem. $\mathcal R$-limits  are intrinsically connected with a new combinatorial notion of largeness which is similar to but has stronger properties than the classical notions of  uniform density one and IP$^*$.
While the main goal of this paper is  to establish a \emph{universal} property of  strongly mixing actions of countable abelian groups, our results,  when applied to  $\Z$-actions, offer a new  way of dealing with strongly mixing transformations. In particular, we obtain several new characterizations  of strong mixing for $\Z$-actions, including a result which can be viewed as the analogue  of the weak mixing  of all orders property established  by Furstenberg in the  course of his proof of Szemer{\'e}di's theorem.
We also demonstrate the versatility of $\mathcal R$-limits by obtaining new characterizations of  higher order weak and  mild mixing for actions of countable abelian groups.
\end{abstract}

\textbf{Keywords:} Ergodic theory, Mixing of higher orders, Ramsey Theory.  
\tableofcontents
\section{Introduction}
Let $G=(G,+)$ be a countable discrete abelian group and let $(T_g)_{g\in G}$ be a measure preserving $G$-action on a separable probability space $(X,\mathcal A,\mu)$. We will call the quadruple $(X,\mathcal A,\mu, (T_g)_{g\in G})$ a measure preserving system.  A measure preserving system $(X,\mathcal A,\mu, (T_g)_{g\in G})$ is strongly mixing (or 2-mixing) if for any $A_0,A_1\in\mathcal A$, one has
\begin{equation}
    \lim_{g\rightarrow\infty} \mu(A_0\cap T_g A_1)=\mu(A_0)\mu(A_1).
\end{equation}
The goal of this paper is to obtain new results about higher order mixing properties of stronlgy mixing actions of abelian groups. These results are motivated by the following classical problem going back to Rohlin (who formulated it   for $\Z$-actions, see \cite{rokhlin1949endomorphisms}).
\begin{namedthm*}{Rohlin's Problem}\label{11.Rohlin'sProblem}
Assume that a measure preserving system $(X,\mathcal A,\mu, (T_g)_{g\in G})$ is strongly mixing. Is it true that given any $\ell\geq 2$ the system $(X,\mathcal A,\mu, (T_g)_{g\in G})$ is $(\ell+1)$-mixing, meaning that for  any $A_0,...,A_\ell\in\mathcal A$ and any sequences 
 $(g^{(1)}_k)_{k\in\N}$,...,$(g^{(\ell)}_k)_{k\in\N}$ in $G$ satisfying:
\begin{enumerate}[(i)]
\item For any $j\in\{1,...,\ell\}$
\begin{equation}\label{1.SequenceGoingToInfty}
\lim_{k\rightarrow\infty}g^{(j)}_k=\infty
\end{equation}
\item For any distinct $i,j\in\{1,...,\ell\}$,
\begin{equation}\label{1.DifferenccesGoingToInfty}
\lim_{k\rightarrow\infty}(g^{(j)}_k-g^{(i)}_k)=\infty. 
\end{equation}
\end{enumerate}
one has
\begin{equation}\label{1.RightLimitEquation}
    \lim_{k\rightarrow\infty}\mu(A_0\cap T_{g^{(1)}_k}A_1\cap\cdots\cap T_{g^{(\ell)}_k}A_\ell)=\prod_{j=0}^\ell\mu(A_j).
\end{equation}
\end{namedthm*}

While for $\Z$-actions Rohlin's problem is still unsolved,\footnote{
The notable classes of $\Z$-actions for which it is known that 2-mixing implies mixing of all orders include
ergodic automorphisms of compact groups \cite{rokhlin1949endomorphisms}, mixing transformations with singular spectrum \cite{host1991mixing}, and mixing actions of finite rank \cite{kalikow1984twofold},  \cite{ryzhikov1993joinings}. 
It is also known that some natural actions of various locally compact groups posses the property of mixing of all orders (see, for example, \cite{marcus1978horocycle}, \cite{mozes1992mixing}, \cite{ryzhikov2000rokhlin}, \cite{fayad2016multiple}).
}
an example  for $\Z^2$-actions, due to Ledrappier, shows that, in general, mixing does not imply mixing of higher orders \cite{ledrappier1978champ}. (The reader is referred to  \cite{schmidt1995AlgebraicDynamicsBook} for more Ledrappier-type examples for $\Z^d$-actions). More precisely, Ledrappier provided an example of a strongly mixing system $(\Gamma,\mathcal B,\mu,(T^nS^m)_{(n,m)\in\Z^2})$, where $\Gamma$ is a compact abelian group, $\mathcal B$ is the $\sigma$-algebra of Borel sets in $\Gamma$, $\mu$ is the normalized Haar measure on $\Gamma$ and $T,S:\Gamma\rightarrow \Gamma$ are commuting automorphisms with the property that for some measurable set $A\subseteq \Gamma$, 
$$\mu(A\cap T^{2^n}A\cap S^{2^n}A)\centernot{ \xrightarrow[n\to\infty]{}} \mu^3(A).$$

The analysis of Ledrappier's example undertaken in \cite{arenas2008ledrappier} reveals that Ledrappier's system is "almost mixing of all orders" in the sense that, for any $\ell\in\N$, if the sequences $(g_k^{(1)})_{k\in\N}$,...,$(g_k^{(\ell)})_{k\in\N}$ in $\Z^2$ satisfy \eqref{1.SequenceGoingToInfty} and \eqref{1.DifferenccesGoingToInfty} and, in addition, the $\ell$-tuples $(g_k^{(1)},...,g_k^{(\ell)})$  avoid certain rather rarefied subsets of $\Z^{2\ell}$, the equation \eqref{1.RightLimitEquation} holds for any $A_0,...,A_\ell\in\mathcal B$ (see \cite[Theorem 3.3]{arenas2008ledrappier}). The results obtained in \cite{arenas2008ledrappier} were extended in \cite{arenas2019almost} to a rather large family of systems of algebraic origin.\\ 

In view of the results obtained in \cite{arenas2008ledrappier} and \cite{arenas2019almost},  one might wonder if it could possibly be true that, similarly to the case of Ledrappier's system,  \textit{any} strongly mixing action $(X,\mathcal A,\mu, (T_g)_{g\in G})$ of an abelian group $G$ is, in some sense,  almost mixing of all orders. The goal of this paper is to establish  a result that can be interpreted as a positive answer to this question.\\

At this point, we would like to mention that in the special case when $G=\Z$ our main theorem (\cref{1.MainResult} below) has corollaries (\cref{1.ZDiagonalResult} and \cref{1.GlobalSigma*resultForZ}) which provide new non-trivial characterizations of the notion of strong mixing in terms of the largeness of sets of the form
\begin{equation}
R^{a_1,...,a_\ell}_\epsilon(A_0,...,A_\ell)=\{n\in\Z\,|\,|\mu(A_0\cap T^{a_1n}A_1\cap\cdots\cap T^{a_\ell n}A_\ell)-\prod_{j=0}^\ell\mu(A_j)|<\epsilon\}
\end{equation}
and
\begin{equation}\label{11.MixingSetForZ}
    R_\epsilon(A_0,...,A_\ell)=\{(n_1,...,n_\ell)\in \Z^\ell\,|\,|\mu(A_0\cap T^{n_1}A_1\cap\cdots \cap T^{n_\ell}A_\ell)-\prod_{j=0}^\ell \mu( A_j)|<\epsilon\}.
\end{equation}
So, if similarly to the case of more general group actions, Rohlin's problem will turn out to have a negative answer for $G=\Z$, our results can still be interpreted as a confirmation of a weaker version of Rohlin's question.\\

Let $(X,\mathcal A,\mu,(T_g)_{g\in G})$ be a measure preserving system. Let $\ell\in\N$ and   $\epsilon>0$. For any  $A_0,...,A_\ell\in\mathcal A$ consider the set
\begin{equation}\label{11.MixingSet}
    R_\epsilon(A_0,...,A_\ell)=\{(g_1,...,g_\ell)\in G^\ell\,|\,|\mu(A_0\cap T_{g_1}A_1\cap\cdots \cap T_{g_\ell}A_\ell)-\prod_{j=0}^\ell \mu( A_j)|<\epsilon\}.
\end{equation}
Clearly, the higher is the degree of multiple mixing of the system $(X,\mathcal A,\mu, (T_g)_{g\in G})$, the more massive should the set $R_\epsilon(A_0,...,A_\ell)$ be as a subset of $G^\ell$. While, for $\ell=1$, the strong mixing property of $(X,\mathcal A,\mu, (T_g)_{g\in G})$  implies that the set $R_\epsilon(A_0,A_1)$ is cofinite, this is no longer the case for $\ell\geq 2$ even if our system $(X,\mathcal A,\mu,(T_g)_{g\in G})$ is mixing of all orders. For example, for any 3-mixing system, if $\epsilon>0$ is small enough, the set
$$R_\epsilon(A_0,A_1,A_2)=\{(g_1,g_2)\in G^2\,|\,|\mu(A_0\cap T_{g_1}A_1\cap T_{g_2}A_2)-\mu(A_0)\mu(A_1)\mu(A_2)|<\epsilon\}$$
cannot contain  pairs $(g_1,g_2)$ which are too close to the "lines" $\{(g,g)\,|\,g\in G\}$, $\{(g,0)\,|\,g\in G\}$ and $\{(0,g)\,|\,g\in G\}$.\\

In what follows we will show that for \textit{any} mixing system $(X,\mathcal A,\mu, (T_g)_{g\in G})$, the subsets of $G^\ell$ which are of the form $\mathcal R_\epsilon(A_0,...,A_\ell)$ posses a strong ubiquity 
property which we will call $\tilde\Sigma_\ell^*$ and which is quite a bit stronger than the properties of largeness associated with weakly and mildly mixing systems.  In other words, we will show that for any strongly mixing system the complement of any set of the form  $R_\epsilon (A_0,...,A_\ell)$ is very "small", giving meaning to the claim that  $(X,\mathcal A,\mu,(T_g)_{g\in G})$ is "almost strongly mixing" of all orders.
This will be achieved with the  help of \textit{$\mathcal R$-limits}, a new notion of convergence  which is based on a classical combinatorial result due to Ramsey and, as we will see, is adequate for dealing with strongly mixing systems. (In particular, we will show that the  $\tilde\Sigma_\ell^*$ property of the sets $R_\epsilon(A_0,...,A_\ell)$ implies the strong mixing of $(X,\mathcal A,\mu,(T_g)_{g\in G}$)).\\

We would like to remark that while the results that we obtain are not as sharp as those obtained in \cite{arenas2008ledrappier} and \cite{arenas2019almost}, they have the advantage of being applicable to \textit{any} strongly mixing system $(X,\mathcal A,\mu,(T_g)_{g\in G})$, where $G$ is a countable abelian group.  Moreover, as will be demonstrated in Section 6, 
the versatility of $\mathcal R$-limits allows one  to obtain new and
recover some old results pertaining to multiple recurrence properties of weakly and mildly mixing
actions of countable abelian groups. We would also like to mention that, as will be seen in Section 3, the utilization of $\mathcal R$-limits brings to life many new equivalent characterizations of strong mixing (some of which bear a strong analogy with the familiar characterizations of weak mixing via convergence in density and mild mixing via IP-convergence). \\

Before introducing the mentioned above notion of largeness for subsets of $G^\ell$, we define a related and somewhat simpler notion in $G$. 
\begin{defn}
Let $m\in\N$, let $(G,+)$ be a countable abelian group, and let $E\subseteq G$.
\begin{enumerate}
    \item We say that $E$ is a $\Sigma_m$ set if it is of the form 
    $$\{g_{k_1}^{(1)}+\cdots+g_{k_m}^{(m)}\,|\,k_1<\cdots<k_m\}$$
    where for each $j\in\{1,...,m\}$, $(g_k^{(j)})_{k\in\N}$ is a sequence in $G$ which satisfies $\lim_{k\rightarrow\infty}g_k^{(j)}=\infty$.
    \item We say that $E$ is a $\Sigma_m^*$ set if it has a non-trivial intersection with every $\Sigma_m$ set. 
\end{enumerate}
\end{defn}
\begin{rem}\label{1.TheCofiniteRemark}
\begin{enumerate}[(a)]
    \item Note that a subset of $G$  is $\Sigma_1^*$ if and only if it is cofinite. On the other hand, for any $m\geq 2$, a $\Sigma_m^*$ set does not need to be cofinite. Moreover, one can show that for each $m\geq 2$, there exists a $\Sigma_{m}^*$ set which fails to be  a $\Sigma_{n}^*$ set for each $n<m$ \cite{BerZel-Hindmanesch}. 
    \item The notion  of $\Sigma_m^*$ is similar to (but much stronger than)  the notion of IP$^*$ which has an intrinsic connection to \textit{mild} mixing and which plays an instrumental role in IP ergodic theory and in Ramsey theory (see, for example, \cite{FBook}, \cite{FKIPSzemerediLong} and \cite{berMcCuIPPolySzemeredi}). The connection between these two notions will be discussed in detail in Section 5. 
\end{enumerate}
\end{rem}

Since the sets  $R_\epsilon(A_0,...,A_\ell)$ are, by definition, subsets of $G^\ell$,  the defined above notion of $\Sigma_m^*$  has to be "upgraded"  to the subsets of the cartesian  power $G^\ell$  in order to be useful in the study of the assymptotic behavior of the \textit{multiparameter} expressions of the form 
\begin{equation}\label{1.MultiParameterExpression}
{\mu(A_0\cap T_{g_1}A_1\cap\cdots \cap T_{g_\ell}A_\ell)},\,g_1,...,g_\ell\in G.
\end{equation}
 However, it is worth noting that the family of  $\Sigma_m^*$ sets is quite adequate for dealing with "diagonal" multicorrelation sequences. In the case $G=\Z$, such  diagonal sequences have  the form
\begin{equation}\label{1.SiingleParameterell}
\mu(A_0\cap T^{a_1n}A_1\cap\cdots\cap T^{a_\ell n}A_\ell),
\end{equation}
where $a_1,...,a_\ell\in\Z$, and play an instrumental role in Furstenberg's ergodic approach to Sz{\'e}meredi's theorem (\cite{furstenberg1977Szemeredi},\cite{FBook}). 
For example, our main result (\cref{1.MainResult}), while dealing with the multiparameter expressions \eqref{1.MultiParameterExpression},
has strong  corollaries of "diagonal" nature. The following theorem (which is a version of   \cref{4.InjectiveDiagonalResult} below) is an example of a new result of this kind. Note the appearance of $\Sigma_\ell^*$ sets in the formulation. 
\begin{thm}\label{1.FinitelyGeneratedDiagonal}
Let $(G,+)$ be a countable abelian group, let $(X,\mathcal A,\mu,(T_g)_{g\in G})$ be a strongly mixing  system, and let the homomorphisms $\phi_1,...,\phi_\ell:G\rightarrow G$ be such that for any $j\in\{1,...,\ell\}$, $\ker(\phi_j)$ is finite and for any $i\neq j$, $\ker(\phi_j-\phi_i)$ is also finite. Then   for any $A_0,...,A_\ell\in\mathcal A$ and any $\epsilon>0$ the set
\begin{equation}
R_\epsilon^{\phi_1,...,\phi_\ell}(A_0,...,A_\ell)
=\{g\in G\,|\,|\mu(A_0\cap T_{\phi_1(g)}A_1\cap \cdots \cap T_{\phi_\ell(g)}A_\ell)-\prod_{j=0}^\ell\mu(A_j)|<\epsilon\}
\end{equation}
is $\Sigma_\ell^*$.
\end{thm}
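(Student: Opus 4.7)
The plan is to derive this theorem from \cref{1.MainResult}---the multiparameter assertion that $R_\epsilon(A_0,\ldots,A_\ell) \subseteq G^\ell$ is $\tilde\Sigma_\ell^*$---by pulling back along the diagonal homomorphism $\Phi\colon G \to G^\ell$ defined by $\Phi(g) = (\phi_1(g),\ldots,\phi_\ell(g))$. Since, by the very definitions of the sets involved, $R_\epsilon^{\phi_1,\ldots,\phi_\ell}(A_0,\ldots,A_\ell) = \Phi^{-1}\bigl(R_\epsilon(A_0,\ldots,A_\ell)\bigr)$, it is enough to show that for every $\Sigma_\ell$ set
$$E = \{g_{k_1}^{(1)} + \cdots + g_{k_\ell}^{(\ell)} \,:\, k_1 < \cdots < k_\ell\} \subseteq G$$
(with $g_k^{(j)} \to \infty$ for each $j$), the image $\Phi(E) \subseteq G^\ell$ is a $\tilde\Sigma_\ell$ set; the $\tilde\Sigma_\ell^*$ property of $R_\epsilon(A_0,\ldots,A_\ell)$ will then supply a $g \in E$ with $\Phi(g) \in R_\epsilon(A_0,\ldots,A_\ell)$, which is exactly what is needed.

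To verify that $\Phi(E)$ is $\tilde\Sigma_\ell$, I would exploit the fact that each $\phi_j$ is a homomorphism: writing $\vec h_k^{(i)} := (\phi_1(g_k^{(i)}),\ldots,\phi_\ell(g_k^{(i)})) \in G^\ell$, we obtain
$$\Phi(g_{k_1}^{(1)} + \cdots + g_{k_\ell}^{(\ell)}) = \vec h_{k_1}^{(1)} + \cdots + \vec h_{k_\ell}^{(\ell)},$$
so $\Phi(E)$ already has the correct combinatorial shape of a sum over strictly increasing indices drawn from $\ell$ sequences in $G^\ell$. What remains is to check that each sequence $(\vec h_k^{(i)})_{k\in\N}$ meets the multiparameter divergence requirements built into the definition of $\tilde\Sigma_\ell$, i.e., the $G^\ell$-analogs of Rohlin's conditions \eqref{1.SequenceGoingToInfty}--\eqref{1.DifferenccesGoingToInfty}. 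This is precisely where the two kernel hypotheses enter: finiteness of $\ker\phi_j$ forces $\phi_j(g_k^{(i)}) \to \infty$ in $G$ as $k \to \infty$ (preimages of finite sets under a homomorphism with finite kernel are finite), yielding coordinatewise divergence of $\vec h_k^{(i)}$; while finiteness of $\ker(\phi_j - \phi_i)$ forces $(\phi_j - \phi_i)(g_k^{(r)}) \to \infty$, which furnishes divergence of all pairwise coordinate differences and thereby ensures that $\Phi(E)$ keeps clear of the thickened diagonals and coordinate axes identified after \eqref{11.MixingSet} as the obstructions to higher-order mixing.

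The substantive work in this proof is thus not algebraic but definitional bookkeeping: one must confirm that the two finite-kernel hypotheses in the statement match exactly the divergence conditions encoded into $\tilde\Sigma_\ell$, rather than a stronger or subtly weaker set of requirements. Granted this matching, the theorem is a clean specialization of \cref{1.MainResult}. As a sanity check, for $\ell = 1$ the statement becomes ``$R_\epsilon^{\phi_1}(A_0,A_1)$ is $\Sigma_1^*$,'' i.e.\ cofinite by \cref{1.TheCofiniteRemark}; and indeed $\Phi = \phi_1$ with finite kernel pulls back cofinite sets to cofinite sets, so the claim reduces to the very definition of strong mixing.
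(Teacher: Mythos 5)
Your proposal is correct and coincides in substance with the paper's argument: the pullback step you describe is exactly \cref{5.3.UsefulInSection4} (the finite kernels of $\phi_t$ and $\phi_t-\phi_{t'}$ make the image sequences non-degenerated with coordinates growing apart, so the image of a $\Sigma_\ell$ set is $\tilde\Sigma_\ell$), and combining this with the $\tilde\Sigma_\ell^*$ property of $R_\epsilon(A_0,\ldots,A_\ell)$ from \cref{3.MainResult}, (iii), is precisely the alternative proof the paper records in \cref{4.CombinatorialRemark}. The paper's primary proof of \cref{4.InjectiveDiagonalResult} uses the same finite-kernel observation but routes it through clause (ii) of \cref{3.MainResult} ($\mathcal R$-limits along an infinite set $S$) rather than clause (iii); the two routes are interchangeable here.
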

When $G$ is finitely generated, \cref{1.FinitelyGeneratedDiagonal} has a stronger version (\cref{4.FinitelyGeneratedEquivalence}), which in the case $G=\Z$ can be formulated as follows.
\begin{thm}\label{1.ZDiagonalResult}
Let $(X,\mathcal A,\mu, T)$ be a measure preserving system, let $\ell\in\N$, and let $a_1,...,a_\ell$  be  distinct non-zero integers. Then $T$ is  strongly mixing if and only if for any $A_0,...,A_\ell\in\mathcal A$ and any $\epsilon>0$, the set 
\begin{equation}\label{1.ZDiagonalEqInLemma}
R^{a_1,...,a_\ell}_\epsilon(A_0,...,A_\ell)=\{n\in\Z\,|\,|\mu(A_0\cap T^{a_1n}A_1\cap\cdots\cap T^{a_\ell n}A_\ell)-\prod_{j=0}^\ell\mu(A_j)|<\epsilon\}
\end{equation}
is $\Sigma_\ell^*$.\footnote{For a related result see \cite[Theorem 1.11]{BerZel-ItteratedDifferenceDiophantine}. See also \cite{KuangYeDeltaMixing}.}
\end{thm}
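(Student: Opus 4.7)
The plan is to establish the two directions of the equivalence separately. The forward direction is an immediate corollary of an earlier theorem in the paper; the backward direction is the substantive new content.

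Forward ($\Rightarrow$). I would apply \cref{1.FinitelyGeneratedDiagonal} with $G=\Z$ and the homomorphisms $\phi_j:\Z\to\Z$ defined by $\phi_j(n)=a_j n$ for $j=1,\ldots,\ell$. The required kernel hypotheses hold trivially: since each $a_j$ is nonzero, $\ker\phi_j=\{0\}$ is finite, and since the $a_j$ are pairwise distinct, for $i\neq j$ the map $\phi_j-\phi_i$ is multiplication by $a_j-a_i\neq 0$, whose kernel is again $\{0\}$. The conclusion of \cref{1.FinitelyGeneratedDiagonal} is precisely that $R^{a_1,\ldots,a_\ell}_\epsilon(A_0,\ldots,A_\ell)$ is $\Sigma_\ell^*$ for every $A_0,\ldots,A_\ell\in\mathcal A$ and every $\epsilon>0$.

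Backward ($\Leftarrow$). Suppose, for the sake of contradiction, that the $\Sigma_\ell^*$ hypothesis holds uniformly in $(A_0,\ldots,A_\ell,\epsilon)$ but $T$ is not strongly mixing. Then there exist $A,B\in\mathcal A$, $\delta>0$, and a sequence $m_k\to\infty$ in $\Z$ with $|\mu(A\cap T^{m_k}B)-\mu(A)\mu(B)|\geq\delta$ for every $k$. By passing to a subsequence along which $m_k$ lies in a single residue class $r$ modulo $a_1$ and replacing $B$ by $T^r B$ (using $T$-invariance of $\mu$), I reduce to the case $a_1\mid m_k$; writing $n_k=m_k/a_1$ gives $|\mu(A\cap T^{a_1 n_k}B)-\mu(A)\mu(B)|\geq\delta$ along $n_k\to\infty$. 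The specific test tuple $A_0=A$, $A_1=B$, $A_j=X$ for $j\geq 2$ then places the infinite set $\{n_k\}$ in the complement of $R^{a_1,\ldots,a_\ell}_{\delta/2}(A,B,X,\ldots,X)$, and it remains to upgrade $\{n_k\}$ to a genuine $\Sigma_\ell$ subset of this (or of an analogous) bad set, directly contradicting the hypothesis.

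This upgrade is the main technical obstacle: a single infinite sequence need not contain a $\Sigma_\ell$ set for $\ell\geq 2$, so the data from one pair $(A,B)$ is insufficient on its own. To produce the required $\Sigma_\ell$ bad set I would combine three ingredients: (i) the full quantification of the hypothesis over all $(\ell+1)$-tuples, enriched by all $T$-translates $T^c A_i$ via $T$-invariance of $\mu$, which yields an abundance of simultaneous $\Sigma_\ell^*$ conditions; (ii) the filter property of $\Sigma_\ell^*$ (closure under finite intersections), provable by applying classical Ramsey's theorem to $2$-colorings of increasing $\ell$-tuples---the ``bad color'' is excluded by the $\Sigma_\ell^*$ assumption, forcing a monochromatic sub-$\Sigma_\ell$ set inside one of the original $\Sigma_\ell^*$ sets; and (iii) the $\mathcal R$-limit framework developed earlier in the paper, under which the hypothesis translates into $\mathcal R$-limit convergence of $\mu(A\cap T^{a_1 n}B)$ to $\mu(A)\mu(B)$. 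A Bolzano--Weierstrass extraction on the bounded sequence $(\mu(A\cap T^{a_1 n}B))_n$, combined with the above Ramsey-type mechanism, produces a $\Sigma_\ell$ subset of the complement on which the $2$-point correlation stays bounded away from $\mu(A)\mu(B)$, yielding the contradiction. This construction is the diagonal specialization---available for $\Z$ precisely because $\Z$ is finitely generated---of the principle indicated in the introduction that the $\tilde\Sigma_\ell^*$ property of the multiparameter $R_\epsilon$ implies strong mixing, and it is the technical heart of \cref{4.FinitelyGeneratedEquivalence}.
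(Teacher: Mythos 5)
Your forward direction is correct and is exactly the paper's route: \cref{1.FinitelyGeneratedDiagonal} (equivalently \cref{4.InjectiveDiagonalResult}) applied to $\phi_j(n)=a_jn$, whose kernel hypotheses are trivially verified. The problem is the backward direction, and the gap sits precisely at the spot you yourself flag as ``the main technical obstacle.'' Having produced an infinite set $\{n_k\}$ in the complement of $R^{a_1,\ldots,a_\ell}_{\delta/2}(A,B,X,\ldots,X)$, you must exhibit a $\Sigma_\ell$ set in that complement, and none of your three ingredients accomplishes this. Ingredient (ii), the filter property of $\Sigma_\ell^*$ sets, is true (and provable by Ramsey as you say), but it runs in the wrong direction: it lets you intersect $\Sigma_\ell^*$ sets, not manufacture a bad $\Sigma_\ell$ set inside a prescribed infinite set. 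Ingredients (i) and (iii) together only yield the reformulation that along every $\Sigma_\ell$ set of $\ell$-fold sums $g_{k_1}+\cdots+g_{k_\ell}$ the correlations $\mu(A_0\cap T^{a_1 g_\alpha}A_1)$ $\mathcal R$-converge to $\mu(A_0)\mu(A_1)$; the real difficulty is passing from this information about \emph{$\ell$-fold sums} back to information about \emph{individual} group elements $g_k$, and no Bolzano--Weierstrass extraction on the scalar sequence $(\mu(A\cap T^{a_1n}B))_n$ provides that passage, because the correlation sequence carries no multiplicative structure relating $\mu(A\cap T^{g+h}B)$ to $\mu(A\cap T^{g}B)$ and $\mu(A\cap T^{h}B)$. (Recall also \cref{1.TheCofiniteRemark}: for $\ell\ge 2$ a $\Sigma_\ell^*$ set need not be cofinite, so an infinite bad set is not, by itself, in tension with the hypothesis.)

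The paper closes exactly this gap with a spectral argument, the implication (iv)$\implies$(i) of \cref{3.MainResult}: for $f$ of mean zero one represents $g\mapsto\int_X\overline f\,T_gf\,\mathrm{d}\mu$ by a measure $\rho$ on $\hat G$ via Bochner's theorem, extracts a subsequence along which $\phi_{g_{k_j}}\to K$ and $\rlim{}\phi_{g_\alpha}\to H$ weakly in $L^2(\rho)$, shows the hypothesis forces $H=0$, and then uses the multiplicativity of characters to get $H=K^\ell$, hence $K=0$, hence $T_{g_{k_j}}f\to 0$ weakly. It is this identity $H=K^\ell$ --- available for characters but not for correlation sequences --- that converts control of $\ell$-fold sums into control of single elements; your sketch contains no substitute for it. (For the record, the paper's actual derivation of the backward direction of the present theorem goes through \cref{4.quasiSurjectiveDiagonalResult}: plugging $X$ into all but two slots shows each $R^{a_2-a_1}_\epsilon(A_1,A_2)$ is $\Sigma_\ell^*$, the spectral implication of \cref{3.MainResult} then gives strong mixing of $(T^{(a_2-a_1)n})_{n\in\Z}$, and the finite index of $(a_2-a_1)\Z$ upgrades this to strong mixing of $T$ --- a finite-index step of the same flavor as your residue-class reduction, but resting on the spectral core you are missing.)
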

\begin{rem}
One can view  \cref{1.ZDiagonalResult} as a strongly mixing analogue of two theorems due to Furstenberg  which  pertain to weak and mild mixing (see Theorems 4.11 and 9.27 in \cite{FBook}). 
The first of these two theorems states that the assumption that $(X,\mathcal A,\mu, T)$ is weakly mixing, implies (and is implied by the fact) that the sets $R^{a_1,...,a_\ell}_\epsilon(A_0,...,A_\ell)$ defined in \eqref{1.ZDiagonalEqInLemma} have uniform density one. The second one states that the assumption that  $(X,\mathcal A,\mu,T)$ is mildly mixing implies (and is implied by) the IP$^*$ property of the  sets $R^{a_1,...,a_\ell}_\epsilon(A_0,...,A_\ell)$. These theorems are instrumental for the proofs of ergodic  Szemer{\'e}di \cite{furstenberg1977Szemeredi} and IP-Szemer{\'e}di \cite{FKIPSzemerediLong} theorems. 
\end{rem}
Note that, for $\ell=1$, both diagonal (see \eqref{1.SiingleParameterell}) and multiparameter (see \eqref{1.MultiParameterExpression}) multicorrelation sequences  reduce to the classical expression $\mu(A_0\cap T_{g}A_1)$. The following theorem (which is a very special case of stronger results to be established in this paper) shows that, even in the rather degenerated case $\ell=1$, $\Sigma_m^*$ sets provide a new characterization for the notion of strong mixing for actions of abelian groups. 
\begin{thm}\label{1.MixingCharacteriazation}
Let $(G,+)$ be a countable abelian group and let $(X,\mathcal A,\mu, (T_g)_{g\in G})$ be a measure preserving system. The following statements are equivalent:
\begin{enumerate}[(i)]
    \item $(T_g)_{g\in G}$ is strongly mixing.
    \item For any $m\in\N$, any $\epsilon>0$ and any $A_0,A_1\in\mathcal A$, the set $$R_\epsilon(A_0,A_1)=\{g\in G\,|\,|\mu(A_0\cap T_g A_1)-\mu(A_0)\mu(A_1)|<\epsilon\}$$
    is $\Sigma_m^*$ in $G$.
    \item There exists an $m\in\N$ such that for any $\epsilon>0$ and any $A_0,A_1\in\mathcal A$, the set $R_\epsilon(A_0,A_1)$ is $\Sigma_m^*$ in $G$.
\end{enumerate}
\end{thm}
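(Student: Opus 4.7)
My plan is to prove the cyclic chain $(i) \Rightarrow (ii) \Rightarrow (iii) \Rightarrow (i)$. For $(i) \Rightarrow (ii)$, strong mixing makes every $R_\epsilon(A_0, A_1)$ cofinite in $G$; it then suffices to verify that every $\Sigma_m$ set is infinite, since cofinite sets meet every infinite set. Given sequences $(g_k^{(j)})_{k \in \N}$ ($j=1,\dots,m$) each tending to infinity, fix $k_1 = 1 < k_2 = 2 < \cdots < k_{m-1} = m-1$ and let $k_m$ range over integers larger than $m-1$; the sums $g_1^{(1)} + \cdots + g_{m-1}^{(m-1)} + g_{k_m}^{(m)}$ form a subsequence of the given $\Sigma_m$ set tending to infinity in $G$, so the $\Sigma_m$ set is infinite. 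The implication $(ii) \Rightarrow (iii)$ is tautological.

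For the substantive direction $(iii) \Rightarrow (i)$, I would argue by contrapositive. Suppose the system is not strongly mixing, so there exist $A_0, A_1 \in \mathcal A$, $\epsilon_0 > 0$, and a sequence $(h_k)_{k\in\N}$ with $h_k \to \infty$ in $G$ and $|c(h_k)| \geq \epsilon_0$, where $c(g) := \mu(A_0 \cap T_g A_1) - \mu(A_0)\mu(A_1)$. For the fixed $m$ of $(iii)$, my goal is to construct a $\Sigma_m$ set disjoint from some $R_{\epsilon'}(A_0', A_1')$, contradicting $(iii)$. Inductively, I build for each $n$ from $1$ up to $m$ a $\Sigma_n$ set $E_n$ disjoint from $R_{\epsilon_n}(A_0^{(n)}, A_1^{(n)})$ for appropriate $A_j^{(n)}$ and $\epsilon_n > 0$. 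The base case $n = 1$ uses $\{h_k\}$ directly with $A_j^{(0)} = A_j$, $\epsilon_1 = \epsilon_0$. For the inductive step, after passing to a subsequence of an adjoined sequence $(g_k^{(n)})$ tending to infinity, arrange that $U_{g_k^{(n)}} 1_{A_1^{(n-1)}} \to f$ weakly in $L^2(\mu)$, with necessarily $\int f\,d\mu = \mu(A_1^{(n-1)})$. Then for each $s \in E_{n-1}$,
$$\mu\bigl(A_0^{(n-1)} \cap T_{s + g_k^{(n)}} A_1^{(n-1)}\bigr) \;=\; \bigl\langle U_{-s} 1_{A_0^{(n-1)}},\, U_{g_k^{(n)}} 1_{A_1^{(n-1)}} \bigr\rangle \;\xrightarrow[k\to\infty]{}\; \bigl\langle 1_{A_0^{(n-1)}},\, U_s f\bigr\rangle.$$
Approximating $f - \mu(A_1^{(n-1)})$ in $L^2$ by a finite signed combination of indicator functions and applying pigeonhole, one extracts a single measurable set $A_1^{(n)}$ and an $\epsilon_n > 0$ such that the sums $\{s + g_k^{(n)}\}$ (ranging over $s \in E_{n-1}$ and $k$ larger than the indices defining $s$) form a $\Sigma_n$ set $E_n$ disjoint from $R_{\epsilon_n}(A_0^{(n-1)}, A_1^{(n)})$.

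The hardest step is guaranteeing that the correlation failure survives the passage through weak limits. If the weak limit $f$ were the constant $\mu(A_1^{(n-1)})$ — the situation of ``strong mixing along the chosen sequence'' — then the right-hand side above would collapse to the trivial value $\mu(A_0^{(n-1)})\mu(A_1^{(n-1)})$ for every $s$, and the separation would vanish. Avoiding this requires choosing $(g_k^{(n)})$ not arbitrarily, but as a subsequence of a failure sequence for the updated pair $(A_0^{(n-1)}, A_1^{(n-1)})$, and carrying out the $L^2$ extraction simultaneously and coherently over all $s \in E_{n-1}$ so that the quantitative failure of mixing propagates to the larger sums. This coordinated multi-index subsequence selection is precisely the role of the $\mathcal R$-limit framework introduced in the following sections of the paper: the classical Ramsey theorem supplies, for any bounded multi-parameter correlation sequence, a single infinite sub-family of $\N$ along which all relevant sub-sums of the indexing sequences converge, with the common limit inheriting the non-triviality of $|c(h_k)| \geq \epsilon_0$. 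Once the induction reaches $n = m$, the resulting $\Sigma_m$ set contradicts $(iii)$ and the proof is complete.
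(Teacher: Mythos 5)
Your easy directions are fine: under strong mixing each $R_\epsilon(A_0,A_1)$ is cofinite, and every $\Sigma_m$ set contains elements outside any prescribed finite set (fix $k_1<\cdots<k_{m-1}$ and let $k_m\to\infty$), so cofinite sets are $\Sigma_m^*$ for every $m$; and (ii)$\Rightarrow$(iii) is tautological. The genuine gap is in (iii)$\Rightarrow$(i), at exactly the point you flag but do not resolve. Your inductive hypothesis controls the pairing $\langle U_{-s}\mathbbm 1_{A_0^{(n-1)}},\mathbbm 1_{A_1^{(n-1)}}\rangle$ for $s\in E_{n-1}$, whereas what your step needs to keep away from the product value is $\langle U_{-s}\mathbbm 1_{A_0^{(n-1)}},f\rangle$, where $f$ is the weak limit of $U_{g_k^{(n)}}\mathbbm 1_{A_1^{(n-1)}}$; nothing transfers the deviation from the first pairing to the second, and taking $(g_k^{(n)})$ to be a "failure sequence for the updated pair" does not do it. The appeal to Ramsey's theorem/$\mathcal R$-limits cannot close this: Ramsey-type extraction guarantees that iterated limits \emph{exist}, but says nothing about their \emph{values}, and the whole content of the implication is that the limit stays non-trivial. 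In operator terms, if $V$ is a weak-operator limit of the $U_{g_k}$ and $f_0=\mathbbm 1_{A_1}-\mu(A_1)$ satisfies $Vf_0\neq 0$, your induction would need $V^mf_0\neq 0$, which fails for general contractions; some structural input beyond compactness and Ramsey is indispensable.

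That input is precisely what the paper's proof of the corresponding implication ((iv)$\Rightarrow$(i) of Theorem 3.1, from which the present theorem is derived) supplies via Bochner's theorem: the correlation sequence $\int\overline f\,T_gf\,\mathrm d\mu$ is represented by a spectral measure $\rho$ on $\hat G$; along a suitable subsequence the characters $\phi_{g_k}$ converge weakly in $L^2(\rho)$ to some $K$, and, because characters are multiplicative, the $\mathcal R$-limit of $\phi_{g_\alpha}$ along $\ell$-fold sums is the iterated limit $K^\ell$. The hypothesis that all $R_\epsilon(A_0,A_1)$ are $\Sigma_\ell^*$ forces this limit to vanish, and $K^\ell=0$ gives $K=0$ pointwise, hence strong mixing; contrapositively, $K\neq 0$ gives $K^m\neq 0$, i.e.\ on the cyclic subspace of $f_0$ the limit operator $V$ acts as multiplication by $K$, so $V^mf_0\neq 0$ and the non-triviality genuinely propagates to $m$-fold sums, which is what produces a $\Sigma_m$ set avoiding some $R_\epsilon(B_0,B_1)$ (after an approximation-by-indicators and a further Ramsey/pigeonhole step to fix one pair of sets). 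Without this spectral/multiplicative mechanism, or a substitute for it, your induction does not close, so the proposal as written does not prove (iii)$\Rightarrow$(i).
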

We are moving now to define the modified versions of $\Sigma_m$ and $\Sigma_m^*$ sets which will be instrumental in our dealing with the multiple mixing properties of strongly mixing systems.
\begin{defn}
Let $(G,+)$ be a countable abelian group and let $(g_k)_{k\in\N}$ and $(h_k)_{k\in\N}$ be two sequences in $G$. We say that $(g_k)_{k\in\N}$ and $(h_k)_{k\in\N}$ \textbf{grow apart} if $\lim_{k\rightarrow\infty}(g_k-h_k)=\infty$. 
\end{defn}
\begin{defn}
Let  $(G,+)$ be a countable abelian group, let $d\in\N$ and let $(\textbf g_k)_{k\in\N}=(g_{k,1},...,g_{k,d})_{k\in\N}$ be a sequence in $G^d$. We say that $(\textbf g_k)_{k\in\N}$ is \textbf{non-degenerated} if for each $j\in\{1,...,d\}$, $${\lim_{k\rightarrow\infty}g_{k,j}=\infty}.$$
\end{defn}
\begin{defn}\label{1.SigmaTildeDefn}
Let $d,m\in\N$ and  let $(G,+)$ be a countable abelian group.
\begin{enumerate}
    \item We say that $E\subseteq G^d$ is a $\tilde\Sigma_m$ set if it is of the form 
    $$\{\textbf g_{k_1}^{(1)}+\cdots+\textbf g_{k_m}^{(m)}\,|\,k_1<\cdots<k_m\}$$
    where for each $j\in\{1,...,m\}$, $(\textbf g_k^{(j)})_{k\in\N}=(g_{k,1}^{(j)},...,g_{k,d}^{(j)})_{k\in\N}$ is a non-degenerated  sequence in $G^d$ and for any distinct  $t,t'\in\{1,...,d\}$ the sequences $(g_{k,t}^{(j)})_{k\in\N}$ and $(g_{k,t'}^{(j)})_{k\in\N}$ are growing apart. (Note that if $d=1$, then $E\subseteq G$ is a $\Sigma_m$ set if and only if it is a $\tilde\Sigma_m$ set.) 
    \item We say that $E\subseteq G^d$ is a $\tilde \Sigma_m^*$ set if it has a non-trivial intersection with every $\tilde \Sigma_m$ set in $G^d$. 
\end{enumerate}
\end{defn}
\begin{rem}
The main difference between $\tilde\Sigma_m$ sets and $\Sigma_m$ sets is that $\tilde\Sigma_m$ sets are  subsets of cartesian powers of $G$ and have the built-in feature which guarantees that, asymptotically, the elements of $\tilde\Sigma_m$ sets stay away from "degenerated" subsets such as, for example, the following subsets of $G^3$:  $\{(g,g,g)\,|\,g\in G\}$, $\{(g,2g,0)\,|\,g\in G\}$ and  $\{(g,g,h)\,|\,g,h\in G\}$. 
\end{rem}
The following theorem, which is a corollary of \cref{1.MainResult} below, demonstrates the relevance of $\tilde\Sigma_m$ sets for dealing with mixing of higher orders.
\begin{thm}\label{1.GlobalSigma*result}
Let $(G,+)$ be a countable abelian group and let $(X,\mathcal A,\mu, (T_g)_{g\in G})$ be a measure preserving system. The following statements are equivalent:
\begin{enumerate}[(i)]
    \item $(T_g)_{g\in G}$ is strongly mixing.
    \item For any $\ell\in\N$, any $A_0,...,A_\ell\in\mathcal A$ and any $\epsilon>0$, the set 
     \vspace{-0.5cm}
     $$R_\epsilon(A_0,...,A_\ell)=\{(g_1,...,g_\ell)\in G^\ell\,|\,|\mu(A_0\cap T_{g_1}A_1\cap\cdots \cap T_{g_\ell}A_\ell)-\prod_{j=0}^\ell \mu( A_j)|<\epsilon\} \vspace{-0.5cm}$$
    is $\tilde\Sigma_\ell^*$ in $G^\ell$.
    \item There exists an $\ell\in\N$ such that for  any $A_0,...,A_\ell\in\mathcal A$ and any $\epsilon>0$, the set $R_\epsilon(A_0,...,A_\ell)$ is $\tilde\Sigma_\ell^*$ in $G^\ell$.
\end{enumerate}
\end{thm}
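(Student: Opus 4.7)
The plan is to establish the cycle $(i)\Rightarrow(ii)\Rightarrow(iii)\Rightarrow(i)$. The middle implication is immediate (take any $\ell\in\N$), so only the two non-trivial directions require work. For $(i)\Rightarrow(ii)$ I would invoke the main result of the paper, \cref{1.MainResult}, which asserts that in a strongly mixing system the relevant $\mathcal R$-limit of the multicorrelation $\mu(A_0\cap T_{g_1}A_1\cap\cdots\cap T_{g_\ell}A_\ell)$, taken along the Ramsey-type combinatorial structure underlying $\tilde\Sigma_\ell$ sets, equals $\prod_{j=0}^\ell\mu(A_j)$. Unwinding the definition of $\mathcal R$-convergence, this is equivalent to saying that every $\tilde\Sigma_\ell$ subset of $G^\ell$ contains a point at which this multicorrelation lies within $\epsilon$ of $\prod_{j=0}^\ell\mu(A_j)$; in other words, $R_\epsilon(A_0,\ldots,A_\ell)$ meets every $\tilde\Sigma_\ell$ set, which is by definition the $\tilde\Sigma_\ell^*$ property.

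The harder direction $(iii)\Rightarrow(i)$ would proceed by reducing to the one-dimensional characterization of strong mixing supplied by \cref{1.MixingCharacteriazation}. Fix the $\ell$ witnessing $(iii)$. Given $A_0,A_1\in\mathcal A$ and $\epsilon>0$, apply $(iii)$ with $A_2=\cdots=A_\ell=X$; since $T_g X=X$, the resulting set is the cylinder $R_\epsilon(A_0,A_1)\times G^{\ell-1}$, which is therefore $\tilde\Sigma_\ell^*$ in $G^\ell$. The central step is to pull this back to the assertion that $R_\epsilon(A_0,A_1)\subseteq G$ is $\Sigma_\ell^*$, after which \cref{1.MixingCharacteriazation} delivers strong mixing.

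To accomplish this pull-back, start with an arbitrary $\Sigma_\ell$ set $E=\{g_{k_1}^{(1)}+\cdots+g_{k_\ell}^{(\ell)}\,|\,k_1<\cdots<k_\ell\}\subseteq G$ and build a $\tilde\Sigma_\ell$ set $\tilde E\subseteq G^\ell$ whose projection onto the first coordinate is contained in $E$. Choose auxiliary sequences $h_k^{(1)}:=0,h_k^{(2)},\ldots,h_k^{(\ell)}$ in $G$ so that $h_k^{(t)}-h_k^{(t')}\to\infty$ for distinct $t,t'\in\{1,\ldots,\ell\}$ and $g_k^{(j)}+h_k^{(t)}\to\infty$ for all $j$ and $t$; since $G$ is countable, a diagonal extraction arranges this, thinning the $g_k^{(j)}$'s if necessary (which only replaces $E$ by a sub-$\Sigma_\ell$ set). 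Setting $\mathbf g_k^{(j)}=(g_k^{(j)}+h_k^{(1)},\ldots,g_k^{(j)}+h_k^{(\ell)})$ makes each $\mathbf g_k^{(j)}$ non-degenerated with growing-apart coordinates, so the sums $\mathbf g_{k_1}^{(1)}+\cdots+\mathbf g_{k_\ell}^{(\ell)}$ form a bona fide $\tilde\Sigma_\ell$ set $\tilde E$ whose first coordinate lies in $E$. By hypothesis $\tilde E$ meets $R_\epsilon(A_0,A_1)\times G^{\ell-1}$, yielding the required element of $E\cap R_\epsilon(A_0,A_1)$.

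The principal obstacle I foresee lies in $(i)\Rightarrow(ii)$: this is not really a free-standing combinatorial argument but a translation of the $\mathcal R$-limit formalism developed earlier in the paper into the language of $\tilde\Sigma_\ell^*$ sets, and the point of setting up $\mathcal R$-limits in the first place is precisely so that this translation is tautological. The substantive work therefore lies upstream, in \cref{1.MainResult} itself; the only genuine subtlety in the argument above is the diagonal extraction ensuring $g_k^{(j)}+h_k^{(t)}\to\infty$, which must be executed with some care when $G$ carries substantial torsion but is otherwise routine.
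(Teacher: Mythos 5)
Your proposal is correct and takes essentially the paper's route: the theorem is obtained there as a corollary of \cref{1.MainResult} (= \cref{3.MainResult}), and your pull-back of a $\Sigma_\ell$ set in $G$ to a $\tilde\Sigma_\ell$ set in $G^\ell$ via the auxiliary shifts $h_k^{(t)}$, applied to $R_\epsilon(A_0,A_1,X,\ldots,X)$, is exactly the argument used in the paper for the implication (iii)$\implies$(iv) of \cref{3.MainResult}. The only (cosmetic) difference is that for (iii)$\implies$(i) you could simply invoke \cref{1.MainResult} (iii)$\implies$(i) for the witnessing $\ell$, rather than re-deriving the $\Sigma_\ell^*$ statement and routing through \cref{1.MixingCharacteriazation}.
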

We take the liberty of stating explicitly the following special case of  \cref{1.GlobalSigma*result} to stress the applicability of the aparatus developed in this paper to $\Z$-actions.
\begin{cor}\label{1.GlobalSigma*resultForZ}
Let $(X,\mathcal A,\mu, T)$ be a measure preserving system. The following statements are equivalent:
\begin{enumerate}[(i)]
    \item $T$ is strongly mixing.
    \item For any $\ell\in\N$, any $A_0,...,A_\ell\in\mathcal A$ and any $\epsilon>0$, the set 
     \vspace{-0.5cm}
     $$R_\epsilon(A_0,...,A_\ell)=\{(n_1,...,n_\ell)\in \Z^\ell\,|\,|\mu(A_0\cap T^{n_1}A_1\cap\cdots \cap T^{n_\ell}A_\ell)-\prod_{j=0}^\ell \mu( A_j)|<\epsilon\} \vspace{-0.5cm}$$
    is $\tilde\Sigma_\ell^*$ in $\Z^\ell$.
    \item There exists an $\ell\in\N$ such that for  any $A_0,...,A_\ell\in\mathcal A$ and any $\epsilon>0$, the set $R_\epsilon(A_0,...,A_\ell)$ is $\tilde\Sigma_\ell^*$ in $\Z^\ell$.
\end{enumerate}
\end{cor}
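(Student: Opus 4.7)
The plan is to derive this corollary directly from \cref{1.GlobalSigma*result} by specializing to $G=\Z$. Since $\Z$ is a countable discrete abelian group, the hypotheses of \cref{1.GlobalSigma*result} are satisfied by the measure preserving system $(X,\mathcal A,\mu,(T^n)_{n\in\Z})$ associated to the single transformation $T$. Under this identification, strong mixing of $T$ (in the sense of the corollary) coincides verbatim with strong mixing of the $\Z$-action $(T^n)_{n\in\Z}$ (in the sense of \cref{1.GlobalSigma*result}). Likewise, the multicorrelation sets
\[
R_\epsilon(A_0,\ldots,A_\ell)=\{(n_1,\ldots,n_\ell)\in\Z^\ell\,:\,|\mu(A_0\cap T^{n_1}A_1\cap\cdots\cap T^{n_\ell}A_\ell)-\prod_{j=0}^\ell\mu(A_j)|<\epsilon\}
\]
appearing in the corollary are the sets $R_\epsilon(A_0,\ldots,A_\ell)\subseteq G^\ell$ of \cref{1.GlobalSigma*result} after substituting $G=\Z$ and $T_n=T^n$, and the notion of a $\tilde\Sigma_\ell^*$ subset of $\Z^\ell$ is precisely the instance of the general notion introduced in \cref{1.SigmaTildeDefn} with $G=\Z$.

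With these identifications in place, the implications (i)$\Rightarrow$(ii) and (iii)$\Rightarrow$(i) in the corollary are exactly the corresponding implications of \cref{1.GlobalSigma*result} applied to the group $\Z$, while (ii)$\Rightarrow$(iii) is immediate by choosing any particular $\ell\in\N$ (for example, $\ell=1$). Thus the three-way equivalence is inherited verbatim.

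There is essentially no genuine obstacle here: the corollary is formulated as an explicit special case of \cref{1.GlobalSigma*result}, and the only work is to note that each of the three notions in play (strong mixing, the set $R_\epsilon(A_0,\ldots,A_\ell)$, and the $\tilde\Sigma_\ell^*$ family) restricts compatibly from an arbitrary countable abelian $G$ to $G=\Z$. These checks are purely notational. The substantive content is entirely contained in \cref{1.GlobalSigma*result}, whose proof (via $\mathcal R$-limits) handles the case $G=\Z$ along with all other countable abelian groups simultaneously; once that theorem is in hand, the present corollary requires no further argument.
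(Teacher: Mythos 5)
Your proposal is correct and matches the paper exactly: the paper gives no separate proof of this corollary, presenting it verbatim as the special case $G=\Z$ of \cref{1.GlobalSigma*result}, which is precisely your argument. The only (standard) point worth noting is that strong mixing of the single transformation $T$ agrees with strong mixing of the $\Z$-action $(T^n)_{n\in\Z}$ because, by invertibility and measure preservation, $\mu(A_0\cap T^{-n}A_1)=\mu(T^nA_0\cap A_1)$, so convergence as $n\to+\infty$ and as $n\to\infty$ in $\Z$ coincide.
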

We introduce now the notion of convergence based on the classical Ramsey Theorem that is behind the proof of \cref{1.GlobalSigma*result}. Given $m\in\N$ and an infinite set $S\subseteq \N$, we denote by $S^{(m)}$ the family of all $m$-element subsets of $S$. When writing $\{k_1,...,k_m\}\in S^{(m)}$, we will always assume that $k_1<\cdots<k_m$. 
\begin{thm}[Ramsey's Theorem]\label{1.Ramsey}
Let $r,m\in\N$ and let $C_1,...,C_r\subseteq\N^{(m)}$ be such that 
\begin{equation}\label{1.PartitionOfN}
\N^{(m)}=\bigcup _{j=1}^r C_j.
\end{equation}
Then there exists $j_0\in\{1,...,r\}$ and an infinite subset $S\subseteq\N$, satisfying 
$S^{(m)}\subseteq C_{j_0}$.
\end{thm}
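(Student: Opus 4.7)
The plan is to prove Ramsey's Theorem by induction on $m$, the size of the subsets being colored. The base case $m=1$ is immediate from the pigeonhole principle: if $\mathbb{N}=C_1\cup\cdots\cup C_r$, then at least one $C_j$ must be infinite, and we take $S=C_j$.

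For the inductive step, assume the theorem holds for $m$ and consider an $r$-coloring $c:\mathbb{N}^{(m+1)}\to\{1,\dots,r\}$. I would recursively construct a strictly increasing sequence $n_1<n_2<\cdots$, a nested sequence of infinite sets $\mathbb{N}=S_0\supseteq S_1\supseteq S_2\supseteq\cdots$, and a sequence of colors $c_1,c_2,\dots\in\{1,\dots,r\}$, as follows. Having built $S_i$, let $n_{i+1}=\min S_i$, and define an auxiliary $r$-coloring of $(S_i\setminus\{n_{i+1}\})^{(m)}$ by
\begin{equation*}
c'(A)=c(\{n_{i+1}\}\cup A).
\end{equation*}
By the inductive hypothesis applied to this coloring of the $m$-subsets of an infinite set, there exists an infinite $S_{i+1}\subseteq S_i\setminus\{n_{i+1}\}$ and a color $c_{i+1}$ such that every $m$-subset of $S_{i+1}$ receives color $c_{i+1}$ under $c'$; equivalently, $c(\{n_{i+1}\}\cup A)=c_{i+1}$ for every $A\in S_{i+1}^{(m)}$.

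Now apply the pigeonhole principle to the sequence $(c_i)_{i\in\N}\subseteq\{1,\dots,r\}$ to extract a single value $j_0$ attained on an infinite set of indices $i_1<i_2<\cdots$, and set $S=\{n_{i_1},n_{i_2},\dots\}$. To verify $S^{(m)}\subseteq C_{j_0}$ (in the notation of the statement, $C_{j_0}=c^{-1}(j_0)$), pick any $(m+1)$-subset $\{n_{i_{k_1}},\dots,n_{i_{k_{m+1}}}\}$ with $k_1<\cdots<k_{m+1}$. By the nesting $S_{i_{k_1}+1}\supseteq S_{i_{k_2}+1}\supseteq\cdots$ and the fact that $n_{i_{k_j}}\in S_{i_{k_j}}\subseteq S_{i_{k_1}+1}$ for each $j\geq 2$, the $m$-set $\{n_{i_{k_2}},\dots,n_{i_{k_{m+1}}}\}$ lies in $S_{i_{k_1}+1}^{(m)}$, so by construction $c(\{n_{i_{k_1}}\}\cup\{n_{i_{k_2}},\dots,n_{i_{k_{m+1}}}\})=c_{i_{k_1}}=j_0$, as desired.

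The argument is entirely combinatorial and the only subtlety lies in bookkeeping: one must keep the sets $S_i$ shrinking strictly (by removing $n_{i+1}$ before invoking induction) and verify that every $n_{i_{k_j}}$ with $j\geq 2$ actually sits in $S_{i_{k_1}+1}$, so that the inductive conclusion applies to the relevant $m$-subset. No further obstacle arises, since the pigeonhole step that selects $j_0$ is elementary.
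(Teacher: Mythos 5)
Your induction on $m$ is the right idea and is essentially correct, but there is a small indexing slip in the final verification. Since you set $n_{i+1}=\min S_i$ and then take $S_{i+1}\subseteq S_i\setminus\{n_{i+1}\}$, the element $n_{i_{k_j}}$ does \emph{not} belong to $S_{i_{k_j}}$ (it was deleted at that very step), and when $i_{k_2}=i_{k_1}+1$ the element $n_{i_{k_2}}$ also fails to lie in $S_{i_{k_1}+1}$; so the chain "$n_{i_{k_j}}\in S_{i_{k_j}}\subseteq S_{i_{k_1}+1}$" is not literally true. The repair is immediate: what you do know is $n_{i_{k_j}}\in S_{i_{k_j}-1}\subseteq S_{i_{k_1}}$ for $j\geq 2$ (using $i_{k_j}-1\geq i_{k_1}$ and the nesting), while the homogeneity obtained at the step producing $n_{i_{k_1}}$ reads $c(\{n_{i_{k_1}}\}\cup A)=c_{i_{k_1}}$ for every $A\in S_{i_{k_1}}^{(m)}$; hence $\{n_{i_{k_2}},\dots,n_{i_{k_{m+1}}}\}\in S_{i_{k_1}}^{(m)}$ and the desired conclusion follows. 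Also note that invoking the inductive hypothesis for a coloring of $(S_i\setminus\{n_{i+1}\})^{(m)}$ rather than of $\N^{(m)}$ is legitimate by relabeling; this relativized form is exactly what is recorded in \cref{1.RamseyProofRemark}.

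With that correction, your argument is a complete proof, but it follows a genuinely different route from the one indicated in the paper. The paper treats \cref{1.Ramsey} as classical and, in \cref{1.ItteratedLimitsRemark}, derives it analytically: encode the partition as the $\N^{(m)}$-sequence $x_\alpha=j$ for $\alpha\in C_j$ in the finite (hence compact) space $\{1,\dots,r\}$, use a diagonalization to pass to a subsequence along which all iterated limits $\lim_{j_1\rightarrow\infty}\cdots\lim_{j_m\rightarrow\infty}x_{\{k_{j_1},\dots,k_{j_m}\}}$ exist (necessarily equal to some $j_0$), and diagonalize once more to make the restricted sequence constantly $j_0$, giving $S^{(m)}\subseteq C_{j_0}$. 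Your proof is the classical self-contained combinatorial induction (one-point extension via the inductive hypothesis, then pigeonhole on the colors $c_i$); it requires no compactness language and yields the relative version of \cref{1.RamseyProofRemark} with no extra work. The paper's iterated-limit route buys coherence with the rest of the machinery: it is the same mechanism behind \cref{1.RBolzanoWierstrass} and the link between $\mathcal R$-limits and iterated limits, so Ramsey's theorem appears there as a special case of a convergence statement rather than as a separate combinatorial fact.
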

\begin{rem}\label{1.RamseyProofRemark}
It is easy to see that \cref{1.Ramsey} can be formulated in the  following  equivalent form  that will be frequently used in the sequel:\\
\begin{adjustwidth}{0.5cm}{0.5cm}
\textit{Let $r,m\in\N$, let $P$ be an infinite subset of $\N$  and let $C_1,...,C_r\subseteq\N^{(m)}$ be such that
\begin{equation}
P^{(m)}\subseteq\bigcup _{j=1}^r C_j.
\end{equation}
Then there exists $j_0\in\{1,...,r\}$ and an infinite subset $S\subseteq P$, satisfying 
$S^{(m)}\subseteq C_{j_0}$.}
\end{adjustwidth}
\end{rem}
\begin{defn}
Let $(X,d)$ be a compact metric space, let $x\in X$, let $(x_\alpha)_{\alpha\in \N^{(m)}}$ be an ${\N^{(m)}\text{-sequence}}$ in $X$ and let $S$ be an infinite subset of $\N$. We write
\begin{equation}
    \mathop{\mathcal R\text{-lim}}_{\alpha\in S^{(m)}}x_\alpha=x
\end{equation}
if for every $\epsilon>0$, there exists $\alpha_0\in \N^{(m)}$ such that for any  $\alpha\in S^{(m)}$ satisfying  
${\min \alpha>\max \alpha_0,}$
one has
$$d(x_\alpha, x)<\epsilon.$$
\end{defn}
The following theorem can be viewed as  a version of Bolzano-Weierstrass theorem for ${\mathcal R\text{-convergence}}$. It follows from \cref{1.Ramsey} with the help of a diagonalization argument.
\begin{thm}\label{1.RBolzanoWierstrass}
Let $(X,d)$ be a compact metric space and let $(x_\alpha)_{\alpha\in\N^{(m)}}$ be an $\N^{(m)}$-sequence in $X$. Then for any infinite set $S_1\subseteq \N$ there exists an $x\in X$ and an infinite set $S\subseteq S_1$ such that
\begin{equation}
    \rlim{\alpha\in S^{(m)}}x_{\alpha}=x.
\end{equation}
\end{thm}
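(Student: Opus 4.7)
The plan is to combine the total boundedness of the compact metric space $X$ with an iterated application of Ramsey's theorem (\cref{1.Ramsey}), followed by a standard diagonal argument. First, for each $n\in\N$, since $X$ is compact, fix a finite cover of $X$ by open balls $B(y_1^{(n)},1/n),\ldots,B(y_{r_n}^{(n)},1/n)$ of radius $1/n$.

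Starting from $T_0:=S_1$, I would construct inductively a nested sequence $T_0\supseteq T_1\supseteq T_2\supseteq\cdots$ of infinite subsets of $\N$ together with points $z_n\in X$ such that $d(x_\alpha,z_n)<1/n$ for every $\alpha\in T_n^{(m)}$. Indeed, given $T_{n-1}$, partition $T_{n-1}^{(m)}$ into at most $r_n$ color classes $C_j^{(n)}:=\{\alpha\in T_{n-1}^{(m)}\,|\,x_\alpha\in B(y_j^{(n)},1/n)\}$ (breaking ties by taking the smallest admissible index $j$). By the equivalent form of Ramsey's theorem recorded in \cref{1.RamseyProofRemark}, there exist an index $j_n$ and an infinite subset $T_n\subseteq T_{n-1}$ with $T_n^{(m)}\subseteq C_{j_n}^{(n)}$; set $z_n:=y_{j_n}^{(n)}$. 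For any $n\leq n'$, picking an arbitrary $\alpha\in T_{n'}^{(m)}$ (which exists since $T_{n'}$ is infinite) yields $d(z_n,z_{n'})\leq d(z_n,x_\alpha)+d(x_\alpha,z_{n'})<1/n+1/n'$, so $(z_n)_{n\in\N}$ is Cauchy and hence converges to some $x\in X$.

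Finally, I would diagonalize by choosing $s_1<s_2<\cdots$ with $s_n\in T_n$ and letting $S:=\{s_n\,|\,n\in\N\}\subseteq S_1$. Since the $T_n$ are nested, the tail $\{s_n,s_{n+1},\ldots\}$ lies entirely in $T_n$, so any $\alpha\in S^{(m)}$ with $\min\alpha\geq s_n$ belongs to $T_n^{(m)}$ and therefore satisfies $d(x_\alpha,x)\leq d(x_\alpha,z_n)+d(z_n,x)<1/n+d(z_n,x)$. Given $\epsilon>0$, pick $N$ so large that $1/N+d(z_N,x)<\epsilon$ and $s_N\geq m+1$, and let $\alpha_0$ be any element of $\N^{(m)}$ contained in $\{1,\ldots,s_N-1\}$; then every $\alpha\in S^{(m)}$ with $\min\alpha>\max\alpha_0$ satisfies $\min\alpha\geq s_N$, giving $d(x_\alpha,x)<\epsilon$ and hence $\rlim{\alpha\in S^{(m)}}x_\alpha=x$. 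The only step demanding attention is the induction, where Ramsey's theorem must be applied to colorings of $T_{n-1}^{(m)}$ rather than of $\N^{(m)}$; this is exactly what the formulation in \cref{1.RamseyProofRemark} supplies, and everything else is a routine nested-subsequence diagonalization.
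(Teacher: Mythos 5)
Your argument is correct and is exactly the Ramsey-plus-diagonalization proof the paper has in mind (the paper only sketches it, saying the theorem follows from \cref{1.Ramsey} with a diagonalization argument). One small repair at the final step: choose $\alpha_0$ with $\max\alpha_0=s_N-1$ (for instance $\alpha_0=\{s_N-m,\dots,s_N-1\}$, which exists since $s_N\geq m+1$), because for an arbitrary $\alpha_0\subseteq\{1,\dots,s_N-1\}$ the claimed implication $\min\alpha>\max\alpha_0\Rightarrow\min\alpha\geq s_N$ can fail when $\max\alpha_0<s_N-1$ and $S$ has elements strictly between $\max\alpha_0$ and $s_N$.
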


\begin{rem}\label{1.ItteratedLimitsRemark}
Let $(x_\alpha)_{\alpha\in\N^{(m)}}$ be an $\N^{(m)}$-sequence in a compact metric space $(X,d)$. The introduced above $\mathcal R$-limits have an intrinsic connection with the iterated limits of the form 
\begin{equation}\label{1.ItteratedLimitsExistence}
\lim_{j_1\rightarrow\infty}\cdots\lim_{j_m\rightarrow\infty}x_{\{k_{j_1},...,k_{j_m}\}}.\footnote{
Cf. \cite{sucheston1959sequences} and \cite{lorentz1960remark}.
}
\end{equation} 
The goal of this extended remark is to clarify this connection.
\begin{enumerate}[(a)]
\item Using the compactness of $X$, one can show with the help of a diagonalization argument that for any increasing sequence $(k_j)_{j\in\N}$, there exists a subsequence $(k_j')_{j\in\N}$ for which all the limits in \eqref{1.ItteratedLimitsExistence} exist. 
\item  By \cref{1.RBolzanoWierstrass}, there exists an increasing sequence of natural numbers $(k_j)_{j\in\N}$ so that for $S=\{k_j\,|\,j\in\N\}$, $\rlim{\alpha\in S^{(m)}}x_\alpha$ exists. Let $(k_j')_{j\in\N}$ be the subsequence of $(k_j)_{j\in\N}$ which  is guaranteed to exist by (a). Letting $S_1=\{k_j'\,|\,j\in\N\}$, we have 
\begin{equation}\label{1.ItteratedLimits}
\rlim{\alpha\in S_1^{(m)}}x_\alpha=\lim_{j_1\rightarrow\infty}\cdots\lim_{j_m\rightarrow\infty}x_{\{k'_{j_1},...,k'_{j_m}\}}.
\end{equation}
\item When $X=\{1,...,r\}$, one can use (a) to prove \cref{1.Ramsey}. Let $r,m\in\N$ and consider a partition $\N^{(m)}=\bigcup_{j=1}^r C_j$. Let $(x_\alpha)_{\alpha\in\N^{(m)}}$ be defined by $x_\alpha=j$ if $\alpha\in C_j$. For some increasing sequence $(k_j)_{j\in\N}$ in $\N$  there exists a $j_0\in\{1,...,r\}$ such that 
$$\lim_{j_1\rightarrow\infty}\cdots\lim_{j_m\rightarrow\infty}x_{\{k_{j_1},...,k_{j_m}\}}=j_0.$$ 
By using  a diagonalization argument, we obtain a subsequence $(k_j')_{j\in\N}$ of $(k_j)_{j\in\N}$ with the property that $x_{\{k_{j_1}',...,k_{j_m}'\}}=j_0$ for any $j_1<\cdots<j_m$.  Now let $S=\{k_j'\,|\,j\in\N\}$. It follows that $S^{(m)}\subseteq C_{j_0}$. 
\end{enumerate}
\end{rem}
Before formulating our main result, we need two more definitions. 
\begin{defn}
Let $m\in\N$ and let $(G,+)$ be a countable abelian group. For any sequence $(\textbf g_k)_{k\in\N}=(g_{k,1},...,g_{k,m})_{k\in\N}$ and any $\alpha=\{k_1,...,k_m\}\in\N^{(m)}$ we let 
\begin{equation}\label{1.gAlphaDefn}
g_\alpha=\sum_{j=1}^m g_{k_j,j}=g_{k_1,1}+g_{k_2,2}+\cdots+g_{k_m,m},
\end{equation}
where $k_1<\cdots<k_m$. 
\end{defn}
\begin{defn}
Let $m\in\N$, let $(G,+)$ be a countable abelian group and let 
$$(\textbf g_k)_{k\in\N}=(g_{k,1},...,g_{k,m})_{k\in\N}\text{ and }(\textbf h_k)_{k\in\N}=(h_{k,1},...,h_{k,m})_{k\in\N}$$
be sequences in $G^m$. We  say that  $(\textbf g_k)_{k\in\N}$ and $(\textbf h_k)_{k\in\N}$ are \textbf{essentially distinct} if for each $t\in\{1,...,m\}$, $(g_{k,t})_{k\in\N}$ and $(h_{k,t})_{k\in\N}$ grow apart (i.e. $\lim_{k\rightarrow\infty}(g_{k,t}-h_{k,t})=\infty$).  
\end{defn}
\begin{rem}
The following observation indicates the natural  connection between non-degenerated, essentially distinct sequences in $G^m$ and $\tilde\Sigma_m$ sets. Let $d,m\in\N$ and let $(G,+)$ be a countable abelian group. Then for any non-degenerated and essentially distinct sequences 
$$(\textbf g_k^{(j)})_{k\in\N}=(g_{k,1}^{(j)},...,g_{k,m}^{(j)})_{k\in\N},\,j\in\{1,...,d\},$$
in $G^m$, the set 
\begin{multline*}
\{(g_\alpha^{(1)},...,g_\alpha^{(d)})\,|\,\alpha\in\N^{(m)}\}=\{(g^{(1)}_{k_1,1}+\cdots+g^{(1)}_{k_m,m},...,g^{(d)}_{k_1,1}+\cdots+g^{(d)}_{k_m,m})\,|\,k_1<\cdots<k_m\}\\
=\{(g^{(1)}_{k_1,1},...,g^{(d)}_{k_1,1})+\cdots+(g^{(1)}_{k_m,m},...,g^{(d)}_{k_m,m})\,|\,k_1<\cdots<k_m\}
\end{multline*}
is a $\tilde\Sigma_m$ set in $G^d$. 
\end{rem}
We are ready now to formulate our main result (it appears as \cref{3.MainResult} in Section 3). It incorporates some of the characterizations of strongly mixing systems which were mentioned above. 
\begin{thm}\label{1.MainResult}
Let $\ell\in\N$, let $(G,+)$ be a countable abelian group and let $(X,\mathcal A,\mu, (T_g)_{g\in G})$ be a  measure preserving system. 
The following statements are equivalent:
\begin{enumerate}[(i)]
    \item  $(T_g)_{g\in G}$ is strongly mixing.
     \item For any non-degenerated and essentially distinct sequences 
    $(\textbf g_k^{(j)})_{k\in\N}$, $j\in\{1,...,\ell\},$
   in $G^{(\ell)}$, there exists an infinite $S\subseteq\N$ such that for any  $A_0,...,A_\ell\in\mathcal A$,
    \begin{equation}
        \rlim{\alpha\in S^{(\ell)}}\mu(A_0\cap T_{ g^{(1)}_\alpha}A_1\cap \cdots\cap T_{ g^{(\ell)}_\alpha}A_\ell)=\prod_{j=0}^\ell\mu(A_j).
    \end{equation}
    More explicitly, if 
    $$(\textbf g^{(j)}_k)_{k\in\N}=(g^{(j)}_{k,1},...,g^{(j)}_{k,\ell})_{k\in\N},$$ for each $j\in\{1,...,\ell\},$
    then
    $$\rlim{\{k_1,...,k_\ell\}\in S^{(\ell)}}\mu(A_0\cap T_{ g_{k_1,1}^{(1)}+\cdots+ g_{k_\ell,\ell}^{(1)}}A_1\cap\cdots\cap T_{  g_{k_1,1}^{(\ell)}+\cdots+ g_{k_\ell,\ell}^{(\ell)}}A_\ell)=\prod_{j=0}^\ell \mu(A_j).$$
    \item For any $\epsilon>0$ and any $A_0,...,A_\ell\in\mathcal A$, the set $$R_\epsilon(A_0,...,A_\ell)=\{(g_1,...,g_\ell)\in G^\ell\,|\,|\mu(A_0\cap T_{g_1}A_1\cap\cdots \cap T_{g_\ell}A_\ell)-\prod_{j=0}^\ell \mu( A_j)|<\epsilon\}$$ 
    is $\tilde \Sigma_\ell^*$ in $G^\ell$.
      \item For any $\epsilon>0$ and any $A_0,A_1\in\mathcal A$, the set $R_\epsilon(A_0,A_1)$ is $\Sigma_\ell^*$ in $G$.
    \end{enumerate}
\end{thm}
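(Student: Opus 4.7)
The plan is to establish the cycle of implications (i) $\Rightarrow$ (ii) $\Rightarrow$ (iii) $\Rightarrow$ (iv) $\Rightarrow$ (i); the heart of the argument is (i) $\Rightarrow$ (ii), while (iv) $\Rightarrow$ (i) is the step I expect to be most delicate. For (i) $\Rightarrow$ (ii), I would proceed by induction on $\ell$. The base case $\ell = 1$ is immediate from the definition of strong mixing. For the inductive step, given non-degenerated and essentially distinct sequences $(\textbf{g}_k^{(j)})_{k \in \N}$ in $G^\ell$ for $j = 1, \ldots, \ell$, I would apply the Ramsey-based Bolzano--Weierstrass theorem (\cref{1.RBolzanoWierstrass}) to the bounded $\N^{(\ell)}$-sequence $\mu(A_0 \cap T_{g^{(1)}_\alpha} A_1 \cap \cdots \cap T_{g^{(\ell)}_\alpha} A_\ell)$, combined with a diagonalization over a countable $L^2$-dense family of sets, to obtain a single infinite $S \subseteq \N$ along which all such $\mathcal{R}$-limits exist. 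To identify the limit as $\prod_{j=0}^\ell \mu(A_j)$, I would peel off the innermost summand indexed by $k_\ell$: as $k_\ell \to \infty$ with $k_1 < \cdots < k_{\ell-1}$ fixed, each $g^{(j)}_{k_\ell, \ell}$ tends to infinity, and essential distinctness ensures that for $i \neq j$ the differences $g^{(j)}_{k_\ell, \ell} - g^{(i)}_{k_\ell, \ell}$ also tend to infinity, so strong mixing asymptotically detaches the $A_\ell$-factor and replaces it by $\mu(A_\ell)$. The residual $(\ell - 1)$-correlation then falls under the inductive hypothesis, delivering $\prod_{j=0}^{\ell-1} \mu(A_j)$.

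For (ii) $\Rightarrow$ (iii), I would argue contrapositively: if $R_\epsilon(A_0, \ldots, A_\ell)$ is not $\tilde\Sigma_\ell^*$, there is a $\tilde\Sigma_\ell$ set $E \subseteq G^\ell \setminus R_\epsilon$. Writing $E = \{\textbf{h}^{(1)}_{k_1} + \cdots + \textbf{h}^{(\ell)}_{k_\ell} : k_1 < \cdots < k_\ell\}$ and transposing the index convention (so that the sequence-index in the $\tilde\Sigma_\ell$-presentation matches the coordinate-index of (ii), under the identification $g^{(j)}_{k,t} \leftrightarrow h^{(t)}_{k,j}$), the generators of $E$ become $\ell$ non-degenerated and essentially distinct sequences in $G^\ell$ satisfying the hypothesis of (ii). The $\mathcal{R}$-limit $\prod \mu(A_j)$ supplied by (ii) then forces elements of $E$ into $R_\epsilon$ for $\alpha$ with $\min \alpha$ large, contradicting disjointness. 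For (iii) $\Rightarrow$ (iv), given any $\Sigma_\ell$ set $F \subseteq G$ generated by sequences $(h_k^{(j)})_{j=1}^\ell$, I would build a $\tilde\Sigma_\ell$ set in $G^\ell$ whose first coordinate lies in $F$ by padding with auxiliary sequences in $G$ satisfying the non-degeneracy and intra-tuple growing-apart conditions; such padding is possible because $G$ is an infinite countable abelian group. Applied to the padded tuple $(A_0, A_1, X, \ldots, X)$, for which $R_\epsilon(A_0, A_1, X, \ldots, X)$ depends only on the first coordinate and reduces to $R_\epsilon(A_0, A_1)$, (iii) delivers an element of the constructed $\tilde\Sigma_\ell$ set whose first coordinate yields a point of $F \cap R_\epsilon(A_0, A_1)$.

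The main obstacle is (iv) $\Rightarrow$ (i), where the seemingly weaker $\Sigma_\ell^*$ property for single-shift correlations must be upgraded to cofiniteness. Arguing by contradiction, suppose $(T_g)_{g \in G}$ is not strongly mixing: there exist $A_0, A_1 \in \mathcal{A}$, $\epsilon_0 > 0$, and $g_k \to \infty$ in $G$ with $|\mu(A_0 \cap T_{g_k} A_1) - \mu(A_0) \mu(A_1)| \geq \epsilon_0$. Setting $f = \chi_{A_1} - \mu(A_1)$, so $|\langle \chi_{A_0}, T_{g_k} f \rangle| \geq \epsilon_0$, $L^2$-weak compactness permits (after a subsequence) extracting a weak limit $h = \text{w-lim}\, T_{g_k} f$ with $|\langle \chi_{A_0}, h \rangle| \geq \epsilon_0$. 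The plan is then to iterate this construction: using the unitarity and commutativity of $(T_g)_{g \in G}$, one tracks the correlations $\langle \chi_{A_0}, T_{g_{k_1} + \cdots + g_{k_\ell}} f \rangle$ as inner products involving iterated weak limits, and invokes \cref{1.RBolzanoWierstrass} to select a single subsequence along which these correlations remain uniformly bounded away from zero. This produces a $\Sigma_\ell$ set disjoint from $R_{\epsilon'}(A_0', A_1')$ for suitable sets $A_0', A_1'$ built from the weak limits and some $\epsilon' > 0$, contradicting (iv). The principal difficulty here is guaranteeing that the iterated weak limits retain a non-trivial contribution --- in spectral terms a failure of the Rajchman property for the spectral measure of $f$ --- which survives the $\ell$-fold accumulation of translations; a careful Hilbert-space or spectral argument will be needed to make this rigorous.
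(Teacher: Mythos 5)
Your implications (ii)$\implies$(iii) and (iii)$\implies$(iv) are fine and essentially identical to the paper's (the transposition of indices and the padding with auxiliary sequences applied to $R_\epsilon(A_0,A_1,X,\dots,X)$ are exactly what is done there). The proposal breaks down, however, at the heart of the argument, the inductive step of (i)$\implies$(ii). Your plan is to ``peel off the innermost summand indexed by $k_\ell$'' and claim that strong mixing ``detaches the $A_\ell$-factor.'' This step is invalid, and not just for a technical reason: as $k_\ell\to\infty$ with $k_1<\dots<k_{\ell-1}$ fixed, \emph{every} shift $g^{(j)}_\alpha=g^{(j)}_{k_1,1}+\cdots+g^{(j)}_{k_\ell,\ell}$, $j=1,\dots,\ell$, moves (non-degeneracy forces $g^{(j)}_{k_\ell,\ell}\to\infty$ for all $j$), so there is no residual $(\ell-1)$-fold correlation left to which the inductive hypothesis could be applied; what the innermost limit asks for is precisely the convergence of an $\ell$-fold correlation along shifts that go to infinity with pairwise differences going to infinity, i.e.\ $(\ell+1)$-mixing. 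That is exactly Rohlin's problem, which 2-mixing does not settle (Ledrappier's example shows it can fail), so no argument of this shape can work; indeed, if it did, you would obtain (ii) with $S=\N$, whereas the whole point of the theorem is that one only gets \emph{some} infinite $S$, produced by a Ramsey-type selection. The paper's route is genuinely different: it proves \cref{2.MainResult} by taking $\mathcal R$-limits of the diagonal coupling $\mu_\Delta$ on $X^{\ell+2}$, uses \cref{2.DecomposingRlimLemma} to peel off the \emph{outermost} index $k_1$ (not the innermost), identifies the marginal on the last $\ell+1$ coordinates via the inductive hypothesis applied to the \emph{difference} shifts, and then decouples the $A_0$-coordinate with \cref{2.PropTheIndependentjoining}, which exploits the mixing property of the single sequence $(T_{g^{(j)}_{k,1}})_{k}$ acting on the product space. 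The object that gets ``detached'' is $A_0$, at the level of couplings, not $A_\ell$ at the level of scalars.

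There is a second, smaller gap: your (iv)$\implies$(i) is only a program, and its crux is left open by your own admission. The paper's proof here is a concrete spectral argument: for $f$ with $\int f\,\mathrm d\mu=0$ one takes the Bochner measure $\rho$ on $\hat G$, chooses $S$ so that $K=\lim_j\phi_{g_{k_j}}$ and $H=\rlim{\alpha\in S^{(\ell)}}\phi_{g_\alpha}$ exist weakly and so that the pair-correlation $\mathcal R$-limits exist; hypothesis (iv) forces those $\mathcal R$-limits to equal $\mu(A_0)\mu(A_1)$, which makes $H$ orthogonal to every character and hence $H=0$, while the iterated-limit computation gives $H=K^{\ell}$, so $K=0$ and mixing follows on the cyclic subspace of $f$. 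Your sketch (extracting a weak limit of $T_{g_k}f$ and hoping the obstruction survives the $\ell$-fold accumulation of translations) points in the right direction but supplies neither the identity $H=K^\ell$ nor the mechanism by which (iv) kills $H$; without these, the contradiction you aim for (a $\Sigma_\ell$ set avoiding some $R_{\epsilon'}(A_0',A_1')$) is not established. As it stands, the proposal proves (ii)$\implies$(iii)$\implies$(iv) but neither of the two implications that carry the real content of the theorem.
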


The structure of this paper is as follows. In Section 2, some basic facts about couplings of probability spaces are reviewed and some auxiliary results which will be needed in Sections 3 and 6 are established. In Section 3, we prove our main result, \cref{1.MainResult} (=\cref{3.MainResult}). In Section 4, we  derive some diagonal results for strongly mixing systems. In Section 5, we describe the largeness properties of $\tilde\Sigma_m^*$ sets and, more specifically, of the sets $R_\epsilon(A_0,...,A_\ell)$. We also juxtapose the properties of $\tilde\Sigma_m^*$ sets with those of $\tilde{{\text{IP}}}\rm{^*}$ sets and  sets of uniform density one which are  characteristic, correspondingly, of mild and weak mixing.  In Section 6, 
we utilize the methods developed in Sections 2 and 5 to obtain analogues of \cref{1.MainResult}  for mildly and weakly mixing systems.   
\begin{rem}
Throughout this paper, we will be tacitly assuming that the measure preserving systems $(X,\mathcal A,\mu,(T_g)_{g\in G})$ that we are working with are \textit{regular} meaning that the underlying probability space  $(X,\mathcal A,\mu)$ is regular (i.e. $X$ is a  compact metric space and $\mathcal A=\text{Borel}(X)$). Note that this assumption can be made without loss of generality since every separable measure preserving system is equivalent to a regular one (see for instance, \cite[Proposition 5.3]{FBook}).
\end{rem}
\section{Some auxiliary facts involving couplings and $\mathcal R$-limits}
In this section we review some basic facts about couplings of probability spaces and establish some auxiliary results which will be needed in Section 3 and Section 6.
\begin{defn}\label{2.CouplingDefn}
Let $N\in\N$. Given regular probability spaces  $\textbf X_j=(X_j,\mathcal A_j,\mu_j)$, $j\in\{1,...,N\}$, a \textbf{coupling} of  $\textbf X_1,...,\textbf X_N$ is a  Borel probability measure $\lambda$ defined on the measurable space 
$$(\prod_{j=1}^NX_j,\bigotimes_{j=1}^N \mathcal A_j)$$ 
and having the property that for any $j\in\{1,...,N\}$ and any $A\in\mathcal A_j$, $\lambda(\pi_j^{-1}(A))=\mu_j(A)$, where  $\pi_j:\prod_{i=1}^NX_i\rightarrow X_j$ is the projection map onto the j-th coordinate of $\prod_{j=1}^N X_j$.\footnote{
A coupling is just a \textit{joining} of the trivial measure preserving systems $(X_j,\mathcal A_j,\mu_j, \text{Id}_j)$, $j\in\{1,...,N\}$, where $\text{Id}_j:X_j\rightarrow X_j$ denotes the identity map on $X_j$.
}
\end{defn}
We will let $\mathcal C(\textbf X_1,...,\textbf X_N)$ denote the set of all couplings of $\textbf X_1,...,\textbf X_N$. $\mathcal C(\textbf X_1,...,\textbf X_N)$ is a closed subspace of the set of all probability Borel measures on $\prod_{j=1}^NX_j$ endowed with the ${\text{weak-*}}$ topology. With this topology, $\mathcal C(\textbf X_1,...,\textbf X_N)$ is a compact metrizable space. Given a sequence $(\lambda_k)_{k\in\N}$ in $\mathcal C(\textbf X_1,...,\textbf X_N)$,
$$\lambda_k\xrightarrow[k\rightarrow\infty]{} \lambda$$
if and only if for any $A_1\in\mathcal A_1$,...,$A_N\in\mathcal A_N$,
$$\lambda_k(A_1\times\cdots\times A_N)\xrightarrow[k\rightarrow\infty]{} \lambda(A_1\times\cdots\times A_N).$$

The following proposition follows immediately from the compactness of $\mathcal C(\textbf X_1,...,\textbf X_N)$ and \cref{1.RBolzanoWierstrass}. 
\begin{prop}\label{2.EquivalentLimits}
Let   $\textbf X_j=(X_j,\mathcal A_j,\mu_j)$, $j\in\{1,...,N\}$, be  regular probability spaces. For any $m\in\N$, any infinite $S\subseteq \N$ and any $\N^{(m)}$-sequence $(\lambda_\alpha)_{\alpha\in\N^{(m)}}$ in  $\mathcal C(\textbf X_1,...,\textbf X_N)$,
$$\rlim{\alpha\in S^{(m)}}\lambda_\alpha=\lambda$$
if and only if for any $A_1\in\mathcal A_1$,...,$A_N\in\mathcal A_N$,
$$\rlim{\alpha\in S^{(m)}}\lambda_\alpha(A_1\times \cdots\times A_N)=\lambda(A_1\times\cdots \times A_N).$$
\end{prop}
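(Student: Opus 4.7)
The plan is to combine two equivalences: the sequential reformulation of $\mathcal R$-convergence and the stated characterization of weak-* convergence on $\mathcal{C}(\textbf{X}_1,\ldots,\textbf{X}_N)$ by measurable rectangles. The sequential reformulation is the observation (valid for any $\N^{(m)}$-sequence $(x_\alpha)$ in a metric space $(X,d)$ and any $y\in X$) that $\mathcal R\text{-}\lim_{\alpha\in S^{(m)}}x_\alpha=y$ if and only if $x_{\alpha_k}\to y$ for every $\N$-sequence $(\alpha_k)$ in $S^{(m)}$ with $\min\alpha_k\to\infty$. The forward direction is tautological; the reverse direction is a short negation argument --- if the $\mathcal R$-limit fails, choose $\epsilon_0>0$ witnessing failure and, for each $k\in\N$, pick $\alpha_k\in S^{(m)}$ with $\min\alpha_k>k$ and $d(x_{\alpha_k},y)\ge\epsilon_0$. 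I would record this briefly at the start, since the same criterion will be applied both in $\mathcal{C}(\textbf{X}_1,\ldots,\textbf{X}_N)$ and in $\mathbb R$.

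Given the sequential criterion, the statement reduces to a short chain of equivalences. The assertion $\mathcal R\text{-}\lim_{\alpha\in S^{(m)}}\lambda_\alpha=\lambda$ in $\mathcal{C}(\textbf{X}_1,\ldots,\textbf{X}_N)$ is equivalent (by the criterion applied in $\mathcal{C}$) to $\lambda_{\alpha_k}\to\lambda$ in weak-* for every sequence $(\alpha_k)$ in $S^{(m)}$ with $\min\alpha_k\to\infty$; by the rectangle characterization of weak-* convergence recalled just before the proposition, this is in turn equivalent to $\lambda_{\alpha_k}(A_1\times\cdots\times A_N)\to\lambda(A_1\times\cdots\times A_N)$ for every rectangle and every such $(\alpha_k)$; and a second application of the sequential criterion (now in $\mathbb R$) rewrites this as $\mathcal R\text{-}\lim_{\alpha\in S^{(m)}}\lambda_\alpha(A_1\times\cdots\times A_N)=\lambda(A_1\times\cdots\times A_N)$ for every rectangle, which is the desired conclusion.

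There is essentially no real obstacle: the entire argument is a double application of the sequential criterion sandwiching the weak-*/rectangle equivalence. The compactness of $\mathcal{C}(\textbf{X}_1,\ldots,\textbf{X}_N)$ and \cref{1.RBolzanoWierstrass}, highlighted in the surrounding text, are not strictly needed for this particular equivalence --- they serve to guarantee the existence of $\mathcal R$-limits along suitable subsets, a feature exploited elsewhere in the paper. If one preferred to use them explicitly, the argument could be recast as follows: given rectangle convergence and any sub-direction, \cref{1.RBolzanoWierstrass} extracts an infinite $S'\subseteq S$ along which $\mathcal R\text{-}\lim_{\alpha\in(S')^{(m)}}\lambda_\alpha=\lambda'$ exists in $\mathcal{C}(\textbf{X}_1,\ldots,\textbf{X}_N)$; applying the (easy) forward direction on rectangles identifies $\lambda'$ with $\lambda$ via the fact that two Borel probability measures agreeing on the $\pi$-system of measurable rectangles coincide on $\bigotimes_j\mathcal A_j$.
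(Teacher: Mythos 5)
Your proof is correct. Note that the paper does not write out an argument at all: it simply asserts that the proposition ``follows immediately from the compactness of $\mathcal C(\textbf X_1,...,\textbf X_N)$ and \cref{1.RBolzanoWierstrass}.'' Your primary route is a clean way of filling in what is omitted, and it is slightly more economical than what the paper gestures at: the sequential reformulation of $\mathcal R$-convergence (negation argument for the nontrivial direction, using that $S$ is infinite so $S^{(m)}$ contains elements with arbitrarily large $\min$), combined with the rectangle characterization of weak-$*$ convergence in $\mathcal C(\textbf X_1,...,\textbf X_N)$ stated just before the proposition, really does give both implications without invoking \cref{1.RBolzanoWierstrass} or compactness explicitly (compactness is of course what makes $\mathcal C$ a compact metric space in which $\mathcal R$-limits are defined, and it underlies the rectangle characterization itself, but you may cite that as the paper does). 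Your closing remark correctly reconstructs the argument the authors presumably had in mind --- extract a limit coupling along a subsequence or sub-$\mathcal R$-direction, identify it with $\lambda$ on the $\pi$-system of rectangles, and conclude by uniqueness of measures agreeing on a generating $\pi$-system --- so the two approaches differ only in whether the identification step is run through a compactness extraction or through the sequential criterion directly; yours buys a marginally more elementary argument, while the paper's phrasing emphasizes the tools ($\mathcal R$-Bolzano--Weierstrass) that recur throughout the rest of the paper.
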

Our next goal is to stablish a useful criterion for mixing of higher orders (\cref{2.MainResult}). First, we need a definition and two lemmas. 
\begin{defn}
Let $(Z,\mathcal D,\lambda)$ be a regular probability space and let, for each $k\in\N$, $T_k:Z\rightarrow Z$ be a measure preserving transformation.  The sequence  $(T_k)_{k\in\N}$ has the mixing property if for every $A_0,A_1\in\mathcal D$,
$$\lim_{k\rightarrow\infty}\lambda(A_0\cap T_k^{-1}A_1)=\lambda(A_0)\lambda(A_1).$$
\end{defn}
\begin{rem}
\begin{enumerate}[(a)]
\item If each of the transformations $T_k$, $k\in\N$, is invertible, $(T_k)_{k\in\N}$ has   the mixing property if and only if $(T_k^{-1})_{k\in\N}$  has the mixing property.
\item $(T_k)_{k\in\N}$ has the mixing property if and only if for any $f,g\in L^2(\mu)$, $$\lim_{k\rightarrow\infty}\int_X fT_kg\text{d}\mu=\int_Xf\text{d}\mu\int_Xg\text{d}\mu.$$
\end{enumerate}
\end{rem}
\begin{lem}\label{2.PropTheIndependentjoining}
Let $\textbf X=(X,\mathcal A,\mu)$ and $\textbf Y=(Y,\mathcal B,\nu)$ be regular probability spaces. For each $k\in\N$, let $T_k:Y\rightarrow Y$ be a measure preserving transformation, and assume that the sequence $(T_k)_{k\in\N}$ has the mixing property. Let $\lambda_0$ be a coupling of $\textbf X$ and $\textbf Y$. Assume that $\lambda$ is a probability measure on $\mathcal A\otimes\mathcal B$ such that for any    $A\in\mathcal A$ and $B\in\mathcal B$ one has
\begin{equation}\label{2.LimitCondition}
    \lim_{k\rightarrow\infty}\lambda_0((\text{Id}\times T_k^{-1})(A\times B))=\lambda(A\times B).
\end{equation}
Then $\lambda=\mu\otimes\nu$.
\end{lem}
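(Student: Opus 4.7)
The plan is to verify $\lambda(A\times B)=\mu(A)\nu(B)$ on every measurable rectangle and then invoke uniqueness of extension.

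First, for each fixed $A\in\mathcal{A}$, I would introduce the auxiliary finite Borel measure $\nu_A$ on $Y$ defined by $\nu_A(B)=\lambda_0(A\times B)$. Since $\lambda_0$ is a coupling, $\nu_A(B)\leq \lambda_0(X\times B)=\nu(B)$, so $\nu_A\ll\nu$ and the Radon--Nikodym derivative $f_A:=d\nu_A/d\nu$ exists, lies in $L^2(\nu)$, and satisfies $0\leq f_A\leq 1$ $\nu$-a.e. Also note $\int_Y f_A\,d\nu=\nu_A(Y)=\lambda_0(A\times Y)=\mu(A)$.

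Next, I would rewrite the quantity appearing in the hypothesis \eqref{2.LimitCondition} as an $L^2$ inner product in $Y$:
$$\lambda_0\bigl((\mathrm{Id}\times T_k^{-1})(A\times B)\bigr)=\lambda_0(A\times T_k^{-1}B)=\nu_A(T_k^{-1}B)=\int_Y f_A\cdot (\mathbf{1}_B\circ T_k)\,d\nu.$$
Now I would apply part (b) of the remark following the definition of the mixing property, which states that $(T_k)_{k\in\N}$ having the mixing property is equivalent to the $L^2$ convergence
$$\lim_{k\to\infty}\int_Y f\cdot(g\circ T_k)\,d\nu=\int_Y f\,d\nu\cdot\int_Y g\,d\nu$$
for all $f,g\in L^2(\nu)$. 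Taking $f=f_A$ and $g=\mathbf{1}_B$ gives
$$\lim_{k\to\infty}\lambda_0(A\times T_k^{-1}B)=\Bigl(\int_Y f_A\,d\nu\Bigr)\nu(B)=\mu(A)\nu(B).$$
Combined with \eqref{2.LimitCondition}, this yields $\lambda(A\times B)=\mu(A)\nu(B)$ for every $A\in\mathcal{A}$, $B\in\mathcal{B}$.

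Finally, the measurable rectangles form a $\pi$-system generating $\mathcal{A}\otimes\mathcal{B}$, and both $\lambda$ and $\mu\otimes\nu$ are probability measures that agree on this $\pi$-system, so Dynkin's $\pi$-$\lambda$ theorem forces $\lambda=\mu\otimes\nu$. There is no serious obstacle here; the only non-routine ingredient is recognizing that the $L^2$ reformulation of mixing allows us to test against the density $f_A$ (which is only guaranteed to be an $L^\infty\subseteq L^2$ function, not an indicator), so we cannot appeal directly to the set-theoretic formulation $\nu(B_0\cap T_k^{-1}B_1)\to\nu(B_0)\nu(B_1)$ alone.
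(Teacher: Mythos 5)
Your argument is correct and is essentially the paper's own proof: your Radon--Nikodym derivative $f_A=d\nu_A/d\nu$ is exactly the conditional expectation $\mathbb E(\mathbbm 1_A\otimes\mathbbm 1_Y\,|\,\mathcal B)$ (taken with respect to $\lambda_0$) used there, and both proofs then apply the $L^2$-functional form of the mixing property to the pair $f_A$, $\mathbbm 1_B$ to get $\lambda(A\times B)=\mu(A)\nu(B)$ on rectangles, which determines $\lambda$.
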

\begin{proof}
Note that it suffices to show that for any  $A\in \mathcal A$ and $B\in \mathcal B$,
\begin{equation}\label{2.StatementToProveLemma}
\lambda(A\times B)=\mu(A)\nu(B).
\end{equation}
Fix $A\in\mathcal A$ and $B\in\mathcal B$. Since $\mathbbm  1_A\otimes \mathbbm  1_B=(\mathbbm  1_A\otimes\mathbbm 1_Y)(\mathbbm 1_X\otimes \mathbbm  1_B)$, we have by \eqref{2.LimitCondition} that 
\begin{multline}\label{2.FirstEquationLemma}
    \int_{X\times Y}(\mathbbm  1_A\otimes\mathbbm 1_Y) (\mathbbm 1_X\otimes \mathbbm  1_B)\text{d}\lambda=\lambda(A\times B)=\\
    \lim_{k\rightarrow\infty}\lambda_0((\text{Id}\times T_k^{-1})(A\times B))
    =\lim_{k\rightarrow\infty} \int_{X\times Y}(\text{Id}\times T_k)(\mathbbm  1_A\otimes\mathbbm 1_Y)(\text{Id}\times T_k) (\mathbbm 1_X\otimes \mathbbm  1_B)\text{d}\lambda_0.
\end{multline}
Note that $(\text{Id}\times T_k) (\mathbbm  1_A\otimes \mathbbm 1_Y)=\mathbbm  1_A\otimes \mathbbm 1_Y$ and, if we regard $\mathcal B$ as a sub $\sigma$-algebra of $\mathcal A\otimes\mathcal B$, $\lambda_0|_{\mathcal B}=\nu$. The right-most expression  in \eqref{2.FirstEquationLemma} equals 
\begin{multline}\label{2.SecondEquationLemma}
\lim_{k\rightarrow\infty}\int_{X\times Y} (\mathbbm  1_A\otimes \mathbbm 1_Y)( \mathbbm 1_X\otimes T_k\mathbbm  1_B)\text{d}\lambda_0=\lim_{k\rightarrow\infty}\int_{X\times Y} \mathbb E(\mathbbm  1_A\otimes \mathbbm 1_Y\,|\,\mathcal B)( \mathbbm 1_X\otimes T_k\mathbbm  1_B)\text{d}\lambda_0\\
=\lim_{k\rightarrow\infty}\int_Y \mathbb E(\mathbbm  1_A\otimes \mathbbm 1_Y\,|\,\mathcal B)T_k\mathbbm  1_B\text{d}\nu.
\end{multline}
where $\mathbb E(\mathbbm  1_A\otimes \mathbbm 1_Y\,|\,\mathcal B)$ denotes the conditional expectation of $\mathbbm  1_A\otimes \mathbbm 1_Y$ with respect to $\mathcal B$.\\
But $(T_k)_{k\in\N}$ has the mixing property, so the right-most expression in \eqref{2.SecondEquationLemma} equals
\begin{equation}\label{2.FinalExpressionInSmallLemma}
\int_Y\mathbb E(\mathbbm  1_A\otimes \mathbbm 1_Y\,|\,\mathcal B)\text{d}\nu\int_Y \mathbbm  1_B\text{d}\nu=\lambda(A\times B).
\end{equation}
By noting that 
$$\int_Y\mathbb E(\mathbbm  1_A\otimes \mathbbm 1_Y\,|\,\mathcal B)\text{d}\nu=\int_{X\times Y}(\mathbbm  1_A\otimes \mathbbm 1_Y)\text{d}\lambda_0=\int_X \mathbbm  1_A\text{d}\mu,$$
we have that \eqref{2.FinalExpressionInSmallLemma} equals $\mu(A)\nu(B)$.
\end{proof}

\begin{lem}\label{2.DecomposingRlimLemma}
Let $m\in\N$, let $(X,d)$ be a compact metric space and let $(x_\alpha)_{\alpha\in\N^{(m+1)}}$ be an $\N^{(m+1)}$-sequence  in $X$. Assume that there  exists an infinite $S\subseteq \N$ with the properties (a) for some $x\in X$, $\rlim{\alpha\in S^{(m+1)}}x_\alpha=x$ and (b) for each $k\in S$ there exists $y_k\in X$ such that 
$$\rlim{\alpha\in S^{(m)},\,k<\min \alpha}x_{\{k\}\cup\alpha}=y_k.$$
Then
$$\lim_{k\rightarrow\infty,\,k\in S}\rlim{\alpha\in S^{(m)},\,k<\min \alpha}x_{\{k\}\cup \alpha}=\lim_{k\rightarrow\infty,\,k\in S}y_k=\rlim{\alpha\in S^{(m+1)}}x_\alpha.$$
\end{lem}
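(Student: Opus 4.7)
The plan is to first observe that the first equality in the conclusion is immediate from hypothesis (b): by definition of the inner $\mathcal{R}$-limit we have $\rlim{\alpha \in S^{(m)},\, k < \min \alpha} x_{\{k\}\cup\alpha} = y_k$ for every $k \in S$. So the task reduces to proving the second equality, namely that $\lim_{k \to \infty,\, k \in S} y_k = x$, where $x = \rlim{\alpha \in S^{(m+1)}} x_\alpha$. The key observation is that for any $k \in S$ and any $\alpha \in S^{(m)}$ with $k < \min \alpha$, the set $\beta := \{k\} \cup \alpha$ lies in $S^{(m+1)}$ and satisfies $\min \beta = k$. Thus $x_\beta$ can serve as a common witness: it is close to $x$ as soon as $k$ is large (by (a), since $\min \beta = k$), and close to $y_k$ as soon as $\alpha$ is chosen far enough out (by (b), applied to this fixed $k$). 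The triangle inequality then pins $y_k$ down close to $x$.

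To execute this, I would fix $\epsilon > 0$ and apply (a) with tolerance $\epsilon/2$ to obtain some $\gamma_0 \in \mathbb{N}^{(m+1)}$ such that $d(x_\beta, x) < \epsilon/2$ for every $\beta \in S^{(m+1)}$ with $\min \beta > \max \gamma_0 =: N_0$. Then, for any $k \in S$ with $k > N_0$, I would apply (b) with tolerance $\epsilon/2$ to obtain some $\alpha_0^{(k)} \in \mathbb{N}^{(m)}$ so that $d(x_{\{k\}\cup\alpha}, y_k) < \epsilon/2$ for every $\alpha \in S^{(m)}$ satisfying $\min \alpha > \max(k, \max \alpha_0^{(k)})$. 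Since $S$ is infinite, such an $\alpha$ exists; setting $\beta := \{k\} \cup \alpha$ makes both bounds applicable, and the triangle inequality yields $d(y_k, x) < \epsilon$. As this holds for every sufficiently large $k \in S$, the desired convergence $y_k \to x$ along $S$ follows.

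There is no real obstacle here: the argument is a clean $\epsilon/2 + \epsilon/2$ triangle inequality. The only conceptual point worth flagging is that the definition of $\mathcal{R}$-limit constrains only $\min \beta$, not all coordinates of $\beta$; this is precisely what allows the same element $\beta = \{k\}\cup\alpha$ to be used simultaneously in the outer limit of (a) and the inner limit of (b), even though its smallest coordinate has been fixed at $k$.
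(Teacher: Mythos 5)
Your proposal is correct and follows essentially the same argument as the paper: both proofs fix $\epsilon>0$, choose a threshold from (a) and, for each large $k\in S$, a threshold from (b), and then use the common element $x_{\{k\}\cup\alpha}$ with the triangle inequality to conclude $d(y_k,x)<\epsilon$. No substantive differences to report.
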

\begin{proof}
Let $\epsilon>0$. Note that  (1) there exists $k_0\in S$ such that for any  $\alpha\in S^{(m+1)}$ with $k_0\leq \min \alpha$, $d(x_\alpha, x)<\frac{\epsilon}{2}$ and (2) for any $k\in S$ there exists an $\alpha_k\in S^{(m)}$ such that for any $\alpha\in S^{(m)}$ with 
$\min \alpha >\max (\alpha_k\cup\{k\})$, $d(x_{\{k\}\cup \alpha},y_k)<\frac{\epsilon}{2}$. It follows that for any $k\in S$ with  $k\geq k_0$ and any $\alpha\in S^{(m)}$ with $\min \alpha>\max(\alpha_k\cup\{k\})$, $d(y_k,x)<d(x_{\{k\}\cup\alpha},y_k)+d(x_{\{k\}\cup \alpha},x)<\epsilon$. Since $\epsilon>0$ was arbitrary,
$$\lim_{k\rightarrow\infty,\,k\in S}y_k=x=\rlim{\alpha\in S^{(m+1)}}x_\alpha.$$
\end{proof}
\begin{rem}\label{2.ExistenceOfS}
Let $m\in\N$ and let $(x_\alpha)_{\alpha\in\N^{(m+1)}}$ be an $\N^{(m+1)}$-sequence in a compact metric space $X$. By applying \cref{1.RBolzanoWierstrass} first to the  $\N^{(m)}$-sequence  $(\omega_\alpha)_{\alpha\in\N^{(m)}}=((x_{\{k\}\cup\alpha})_{k\in\N})_{\alpha\in\N^{(m)}}$ in $X^\N$ (here $x_{\{k\}\cup\alpha}=x_0$ for some fixed $x_0\in X$, whenever $k\geq\min \alpha$),  and then to the $\N^{(m+1)}$-sequence $(x_\alpha)_{\alpha\in\N^{(m+1)}}$, we obtain an infinite set $S\subseteq \N$ for which (a) and (b) in the statement of \cref{2.DecomposingRlimLemma} hold.  A similar reasoning shows that one can pick $S$ to be a subset of any prescribed in advance infinite set $S_1\subseteq \N$.
\end{rem}
\begin{rem}\label{2.ItteratedLimits}
In \cref{1.ItteratedLimitsRemark},(c), we indicated how the utilization of  iterated limits
\begin{equation*}\label{2.MultiLimitExpression}
\lim_{j_1\rightarrow\infty}\cdots\lim_{j_m\rightarrow\infty}x_{\{k_{j_1},...,k_{j_m}\}}
\end{equation*}
leads to a proof of Ramsey's theorem (\cref{1.Ramsey}). In this remark, we show that \cref{2.DecomposingRlimLemma} and \cref{2.ExistenceOfS} (which are corollaries of Ramsey's Theorem) imply  that for any  infinite set $S_1\subseteq \N$ and any  $\N^{(m)}$-sequence $(x_\alpha)_{\alpha\in\N^{(m)}}$ in a compact metric space $X$, there exists an increasing sequence $(k_j)_{j\in\N}$ in $S_1$ such that for $S=\{k_j\,|\,j\in\N\}$ each of the limits in the formula
\begin{equation*}\label{2.RLim=LimLim}
\rlim{\alpha\in S^{(m)}}x_\alpha=\lim_{j_1\rightarrow\infty}\cdots\lim_{j_m\rightarrow\infty}x_{\{k_{j_1},...,k_{j_m}\}}
\end{equation*}
exist. The proof is by induction on $m\in\N$. When $m=1$, the result follows form the compactness of $X$. Now let $m>1$ and let $S_1$ be an infinite subset of $\N$. By \cref{2.ExistenceOfS} and \cref{2.DecomposingRlimLemma}, there exists an increasing sequence $(k_j)_{j\in\N}$ in $S_1$ such that for $S=\{k_j\,|\,j\in\N\}$,
$$\rlim{\alpha\in S^{(m)}}x_\alpha=\lim_{j\rightarrow\infty}\rlim{\alpha\in S^{(m-1)}}x_{\{k_j\}\cup \alpha}.$$
The result now follows from the inductive hypothesis applied to the infinite set $S\subseteq \N$ and the $\N^{(m-1)}$-sequence $((x_{\{k\}\cup \alpha})_{k\in\N})_{\alpha\in\N^{(m-1)}}$ in the compact metric spacce $X^\N$.
\end{rem}
The following proposition provides a useful technical tool for  establishing higher order mixing properties of measure preserving systems. 
It will be instrumental in Section 3 for dealing with strongly mixing systems and in Section 6, where we will focus on mildly and weakly mixing systems.
\begin{prop}\label{2.MainResult}
Let $(G,+)$ be a countable abelian group, let $(X,\mathcal A,\mu, (T_g)_{g\in G})$ be a measure preserving system, let $\ell\in\N$ and, for each $j\in\{1,...,\ell\}$, let
$$(\textbf g^{(j)}_k)_{k\in\N}=(g_{k,1}^{(j)},...,g_{k,\ell}^{(j)})_{k\in\N}$$
be a sequence in $G^\ell$. Suppose that 
 for any $t\in\{1,...,\ell\}$ and any  $j\in\{1,...,\ell\}$, $(T_{g_{k,t}^{(j)}})_{k\in\N}$ has the mixing property and that for any $t$ and any $i\neq j$, $(T_{(g_{k,t}^{(j)}-g_{k,t}^{(i)})})_{k\in\N}$ also has the mixing property. Then, there exists an infinite set $S\subseteq \N$ such that for any $A_0,...,A_\ell\in\mathcal A$,
$$\rlim{\alpha\in S^{(\ell)}}\mu(A_0\cap T_{g^{(1)}_\alpha}A_1\cap\cdots\cap T_{g^{(\ell)}_{\alpha}}A_\ell)=\prod_{j=0}^\ell \mu(A_j).$$
\end{prop}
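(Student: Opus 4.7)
My plan is to pass to the level of couplings, use the iterated-limit interpretation of the $\mathcal{R}$-limit from \cref{2.ItteratedLimits}, and reduce the innermost limit to a classical higher-order joint mixing result whose proof rests on \cref{2.PropTheIndependentjoining}.

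First, view the $\N^{(\ell)}$-sequence of couplings $\lambda_\alpha(A_0\times\cdots\times A_\ell) = \mu(A_0\cap T_{g_\alpha^{(1)}}A_1\cap\cdots\cap T_{g_\alpha^{(\ell)}}A_\ell)$ as a sequence in the compact metrizable space $\mathcal{C}(\textbf{X},\ldots,\textbf{X})$ of couplings of $\ell+1$ copies of $\textbf{X}$. By \cref{2.ItteratedLimits}, extract an increasing sequence $(k_j)_{j\in\N}$ such that, setting $S=\{k_j\}_{j\in\N}$,
\[
\rlim{\alpha\in S^{(\ell)}}\lambda_\alpha \;=\; \lim_{j_1\to\infty}\cdots\lim_{j_\ell\to\infty}\lambda_{\{k_{j_1},\ldots,k_{j_\ell}\}},
\]
and the iterated limits on the right all exist in $\mathcal{C}(\textbf{X},\ldots,\textbf{X})$. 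By \cref{2.EquivalentLimits}, it suffices to show that for arbitrary $A_0,\ldots,A_\ell\in\mathcal{A}$, this iterated limit evaluated on $A_0\times\cdots\times A_\ell$ equals $\prod_{j=0}^\ell \mu(A_j)$.

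Second, fix the outer indices $k_{j_1},\ldots,k_{j_{\ell-1}}$ and examine the innermost limit as $m=k_{j_\ell}\to\infty$. Setting $c^{(i)}=\sum_{t=1}^{\ell-1}g_{k_{j_t},t}^{(i)}$ and $B_i=T_{c^{(i)}}A_i$ (so that $\mu(B_i)=\mu(A_i)$), the correlation becomes
\[
\mu\bigl(A_0\cap T_{g_{m,\ell}^{(1)}}B_1\cap\cdots\cap T_{g_{m,\ell}^{(\ell)}}B_\ell\bigr).
\]
The $\ell$ sequences $(g_{k,\ell}^{(i)})_{k\in\N}$ in $G$ satisfy both mixing hypotheses of the proposition at position $t=\ell$, so I would invoke the classical higher-order joint mixing result: for any $L$ sequences $(h_k^{(i)})_k$ in $G$ with each $(T_{h_k^{(i)}})_k$ and each $(T_{h_k^{(i)}-h_k^{(j)}})_k$ mixing, one has $\lim_k\mu(C_0\cap T_{h_k^{(1)}}C_1\cap\cdots\cap T_{h_k^{(L)}}C_L)=\prod_j\mu(C_j)$. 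This gives that the innermost limit equals $\mu(A_0)\prod_{i=1}^\ell\mu(B_i)=\prod_{i=0}^\ell\mu(A_i)$. The joint mixing result is itself proved by induction on $L$ using \cref{2.PropTheIndependentjoining}: at the inductive step, one passes to the weak-$*$ limit of the natural (but $k$-dependent) couplings on $X^{L+1}$, uses the inductive hypothesis together with the pairwise-difference mixing hypothesis to identify the limiting marginal on $X^L$ as the product $\mu^{\otimes L}$, and then applies \cref{2.PropTheIndependentjoining} to the resulting fixed coupling to decouple the remaining factor.

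Since the innermost limit is the constant $\prod_{j=0}^\ell\mu(A_j)$, independent of $k_{j_1},\ldots,k_{j_{\ell-1}}$, each successive outer iterated limit is a limit of the same constant, so the full iterated limit equals $\mu^{\otimes(\ell+1)}$ on product sets and hence $\rlim{\alpha\in S^{(\ell)}}\lambda_\alpha=\mu^{\otimes(\ell+1)}$, proving the proposition. The main obstacle is the higher-order joint mixing lemma invoked above: its inductive proof must handle the $k$-dependence of the natural starting coupling in each application of \cref{2.PropTheIndependentjoining}. This is resolved by first passing to the weak-$*$ limit of these starting couplings, where the pairwise-difference mixing hypothesis is what forces the limiting marginal to be the required product measure, and then applying \cref{2.PropTheIndependentjoining} to the limit while carefully bounding the error from the $k$-dependence in the pushforward.
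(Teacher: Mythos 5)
The central step of your proposal rests on a false statement. The ``classical higher-order joint mixing result'' you invoke --- that whenever each $(T_{h_k^{(i)}})_{k\in\N}$ and each difference $(T_{h_k^{(i)}-h_k^{(j)}})_{k\in\N}$ has the mixing property one gets $\lim_{k\rightarrow\infty}\mu(C_0\cap T_{h_k^{(1)}}C_1\cap\cdots\cap T_{h_k^{(L)}}C_L)=\prod_{j=0}^{L}\mu(C_j)$ --- fails already for $L=2$. In Ledrappier's strongly mixing $\Z^2$-system, take $h_k^{(1)}=(2^k,0)$ and $h_k^{(2)}=(0,2^k)$: strong mixing makes $(T_{h_k^{(1)}})_{k\in\N}$, $(T_{h_k^{(2)}})_{k\in\N}$ and $(T_{h_k^{(1)}-h_k^{(2)}})_{k\in\N}$ all mixing sequences, yet $\mu(A\cap T^{2^k}A\cap S^{2^k}A)$ does not converge to $\mu^3(A)$. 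If your lemma were true, it would in fact answer Rohlin's problem affirmatively for all countable abelian groups, contradicting this example. Consequently your innermost limit (over $j_\ell$, with $k_{j_1},\ldots,k_{j_{\ell-1}}$ frozen) need not equal $\prod_{j=0}^\ell\mu(A_j)$, no matter how $S$ was chosen to make the iterated limits exist; and the obstacle you acknowledge at the end --- the $k$-dependence of the natural starting couplings when trying to prove the lemma from \cref{2.PropTheIndependentjoining} --- is not an error term to be ``carefully bounded'' but precisely the point where any such argument must break: \cref{2.PropTheIndependentjoining} requires a single fixed coupling $\lambda_0$ together with a second limit along which to push it, and a one-index ordinary limit provides no such second limit.

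This is exactly why the paper's proof has the opposite architecture. It argues by induction on $\ell$ directly at the level of $\mathcal R$-limits: for the step to $\ell+1$, the inner $\mathcal R$-limit is taken over the last $\ell$ indices, and the inductive hypothesis (applied to the position-$2,\ldots,\ell+1$ sequences and their pairwise differences) shows only that the resulting fixed coupling $\lambda_0$ of $\textbf X$ with $X^{\ell+1}$ has product marginal on the last $\ell+1$ coordinates --- not that it is globally a product. Only afterwards is the remaining first index sent to infinity, and \cref{2.PropTheIndependentjoining}, applied to this fixed $\lambda_0$ and the mixing sequence of product transformations built from the position-one entries $g^{(j)}_{k,1}$, decouples the zeroth coordinate (this is where \cref{2.DecomposingRlimLemma} and the careful nested choice of $S_1\subseteq S_0\subseteq S$ enter). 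The two-step limit is available precisely because $g_\alpha$ is a sum over $\ell+1$ distinct indices; collapsing the decoupling into a single innermost limit, as you do, asks for genuine higher-order mixing along prescribed sequences, which is exactly what can fail. Your iterated-limit framing via \cref{2.ItteratedLimits} can be kept, but the induction must be arranged so that the limit doing the decoupling of $A_0$ is the outermost one, with the inner limits only required to produce a coupling whose marginal over coordinates $1,\ldots,\ell$ is the product --- which is, in substance, the paper's proof.
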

\begin{proof}
The proof is by induction on $\ell$. When $\ell=1$, it follows from our hypothesis that for any $A_0,A_1\in\mathcal A$,
$$\rlim{\alpha\in\N^{(1)}}\mu(A_0\cap T_{g_\alpha^{(1)}}A_1)=\lim_{k\rightarrow\infty}\mu(A_0\cap T_{g_{k,1}^{(1)}}A_1)=\mu(A_0)\mu(A_1).$$
Now fix $\ell\in\N$ and suppose that  \cref{2.MainResult} holds for any $\ell'\leq \ell$.   
Let $\textbf X=(X,\mathcal A,\mu)$ and let $\mu_\Delta\in\mathcal C=\mathcal C(\underbrace{\textbf X,..., \textbf X}_{\ell+2\text{ times}})$ be defined by $\mu(A_0\times\cdots\times A_{\ell+1})=\mu(A_0\cap\cdots\cap A_{\ell+1})$. 
By the inductive hypothesis, there exists an infinite  $S\subseteq\N$  such that for any $A_1,...,A_{\ell+1}\in\mathcal A$,
    \begin{multline}\label{2.InductiveMixing}
        \rlim{\{j_1,...,j_\ell\}\in S^{(\ell)}}\mu_\Delta(X\times T_{g_{j_1,2}^{(1)}+\cdots+g_{j_\ell,\ell+1}^{(1)}}A_1\times\cdots\times T_{g_{j_1,2}^{(\ell+1)}+\cdots+g_{j_\ell,\ell+1}^{(\ell+1)}}A_{\ell+1})\\
        =\rlim{\{j_1,...,j_\ell\}\in S^{(\ell)}}\mu(X\cap T_{g_{j_1,2}^{(1)}+\cdots+g_{j_\ell,\ell+1}^{(1)}}A_1\cap\cdots\cap T_{g_{j_1,2}^{(\ell+1)}+\cdots+g_{j_\ell,\ell+1}^{(\ell+1)}}A_{\ell+1})\\
    =\rlim{\{j_1,...,j_\ell\}\in S^{(\ell)}}\mu( T_{g_{j_1,2}^{(1)}+\cdots+g_{j_\ell,\ell+1}^{(1)}}A_1\cap\cdots\cap T_{g_{j_1,2}^{(\ell+1)}+\cdots+g_{j_\ell,\ell+1}^{(\ell+1)}}A_{\ell+1})\\
    =\rlim{\{j_1,...,j_\ell\}\in S^{(\ell)}}\mu(A_1\cap T_{(g_{j_{1},2}^{(2)}-g_{j_{1},2}^{(1)})+\cdots+(g_{j_{\ell},\ell+1}^{(2)}-g_{j_{\ell},\ell+1}^{(1)})}A_2\cap
    \cdots\cap T_{(g_{j_1,2}^{(\ell+1)}-g_{j_{1},2}^{(1)})+\cdots+(g_{j_\ell,\ell+1}^{(\ell+1)}-g_{j_{\ell},\ell+1}^{(1)})}A_{\ell+1})\\
    =\prod_{j=1}^{\ell+1}\mu(A_j).
    \end{multline}
    By \cref{1.RBolzanoWierstrass} and the compactness of $\mathcal C$, there exists an infinite set $S_0\subseteq S$ and $\lambda_0\in\mathcal C$ such that for any $A_0,...,A_{\ell+1}\in\mathcal  A$,
    \begin{equation}\label{2.lambda0}
    \rlim{\{j_1,...,j_\ell\}\in S_0^{(\ell)}}\mu_\Delta(A_0\times T_{g_{j_1,2}^{(1)}+\cdots+g_{j_\ell,\ell+1}^{(1)}}A_1\times\cdots\times T_{g_{j_1,2}^{(\ell+1)}+\cdots+g_{j_\ell,\ell+1}^{(\ell+1)}}A_{\ell+1})=\lambda_0(\prod_{j=0}^{\ell+1}A_j).
    \end{equation}
    Likewise, there exist an infinite set $S_1\subseteq S_0$ and $\lambda\in\mathcal C$ such that for any $A_0,...,A_{\ell+1}\in\mathcal A$,
    \begin{multline}\label{2.lambda}
    \rlim{\{j_1,...,j_{\ell+1}\}\in S_1^{(\ell+1)}}\mu_\Delta(A_0\times T_{g_{j_1,1}^{(1)}+\cdots+g_{j_{\ell+1},\ell+1}^{(1)}}A_1\times\cdots\times T_{g_{j_1,1}^{(\ell+1)}+\cdots+g_{j_{\ell+1},\ell+1}^{(\ell+1)}}A_{\ell+1})=\lambda(\prod_{j=0}^{\ell+1}A_j).
    \end{multline}
Let $\textbf {Y}=(\prod_{j=1}^{\ell+1} X,\bigotimes_{j=1}^{\ell+1}\mathcal  A, 
\bigotimes_{j=1}^{\ell+1}\mu)$. Note that \eqref{2.InductiveMixing} holds if we substitute $S_1$ for $S$ and \eqref{2.lambda0} holds when we substitute $S_1$ for $S_0$. Performing this substitution and first applying \eqref{2.lambda0} and  then \eqref{2.InductiveMixing} to $A_1,...,A_{\ell+1}\in\mathcal A$, we have
$$\lambda_0(X\times A_1\times\cdots\times A_{\ell+1})=\prod_{j=1}^{\ell+1}\mu(A_{j}).$$
Also, trivially, for any $A_0\in\mathcal A$,
$$\lambda_0(A_0\times X\times\cdots\times X)=\mu(A_0).$$
Thus, $\lambda_0$ is  a coupling of $\textbf X$ and $\textbf Y$.\\
Using formula \eqref{2.lambda0}, \cref{2.DecomposingRlimLemma} and  applying \eqref{2.lambda} to the set $S_1=\{k_j\,|\,j\in\N\}$ (where we assume that $(k_j)_{j\in\N}$ is an increasing sequence), we have
\begin{multline}\label{2.LimitForProductsIsLambda}
    \lim_{t\rightarrow\infty}\lambda_0(A_0\times T_{g_{k_{t},1}^{(1)}}A_1\times\cdots\times T_{g_{k_{t},1}^{(\ell+1)}}A_{\ell+1} )\\
    =\lim_{t\rightarrow\infty} \rlim{\{j_2,...,j_{\ell+1}\}\in S_1^{(\ell)}}\mu_\Delta(A_0\times  T_{g_{j_2,2}^{(1)}+\cdots+g_{j_{\ell+1},\ell+1}^{(1)}}(T_{g_{k_{t},1}^{(1)}}A_1)\times\cdots\times T_{g_{j_2,2}^{(\ell+1)}+\cdots+g_{j_{\ell+1},\ell+1}^{(\ell+1)}}(T_{g_{k_{t},1}^{(\ell+1)}}A_{\ell+1}))\\
    =\lim_{t\rightarrow\infty} \rlim{\{j_2,...,j_{\ell+1}\}\in S_1^{(\ell)},\,k_t<j_2}\mu_\Delta(A_0\times  T_{g_{k_{t},1}^{(1)}+g_{j_2,2}^{(1)}+\cdots+g_{j_{\ell+1},\ell+1}^{(1)}}A_1\times\cdots\times T_{g_{k_{t},1}^{(\ell+1)}+g_{j_2,2}^{(\ell+1)}+\cdots+g_{j_{\ell+1},\ell+1}^{(\ell+1)}}A_{\ell+1})\\
     =\rlim{\{j_1,...,j_{\ell+1}\}\in S_1^{(\ell+1)}}\mu_\Delta(A_0\times  T_{g_{j_1,1}^{(1)}+\cdots+g_{j_{\ell+1},\ell+1}^{(1)}}A_1\times\cdots\times  T_{g_{j_1,1}^{(\ell+1)}+\cdots+g_{j_{\ell+1},\ell+1}^{(\ell+1)}}A_{\ell+1})=\lambda(\prod_{j=0}^{\ell+1}A_j),
\end{multline}
For each $j\in\N$, let $\textbf T_j=T_{g_{k_j,1}^{(1)}}\times\cdots\times T_{g_{k_j,1}^{(\ell+1)}}$. Note that for any increasing sequence $(t_s)_{s\in\N}$ in $\N$, there exists a subsequence $(t'_s)_{s\in\N}$ and a measure $\lambda'\in\mathcal C(\textbf X,\textbf Y)$, such that for any $A\in\mathcal A$ and any $B\in \bigotimes_{j=1}^{\ell+1}\mathcal  A$, $\lim_{s\rightarrow\infty}\lambda_0(A\times \textbf T_{t'_s}B)=\lambda'(A\times B)$. By \eqref{2.LimitForProductsIsLambda}, $\lambda'=\lambda$ and hence, for any $A\in\mathcal A$ and any $B\in \bigotimes_{j=1}^{\ell+1}\mathcal  A$, $\lim_{j\rightarrow\infty}\lambda_0(A\times \textbf T_j B)=\lambda(A\times B)$.\\
By \cref{2.PropTheIndependentjoining} applied to $\textbf X=(X,\mathcal A,\mu)$, $\textbf Y=(\prod_{j=1}^{\ell+1} X,\bigotimes_{j=1}^{\ell+1}\mathcal  A, 
\bigotimes_{j=1}^{\ell+1}\mu)$ and  the sequence of measure preserving transformations $(T^{-1}_{g_{k_j,1}^{(1)}}\times\cdots\times T^{-1}_{g_{k_j,1}^{(\ell+1)}})_{j\in\N}$, we have that $\lambda=\bigotimes_{j=0}^{\ell+1}\mu$. It follows that for any $A_0,...,A_{\ell+1}\in\mathcal A$,
\begin{multline*}
\rlim{\alpha\in S_1^{(\ell+1)}}\mu(A_0\cap T_{g^{(1)}_\alpha}A_1\cap\cdots\cap T_{g^{(\ell+1)}_{\alpha}}A_{\ell+1})\\
=\rlim{\alpha\in S_1^{(\ell+1)}}\mu_\Delta (A_0\times T_{g^{(1)}_\alpha}A_1\times\cdots\times T_{g^{(\ell+1)}_{\alpha}}A_{\ell+1})=\prod_{j=0}^{\ell+1} \mu(A_j),
\end{multline*}
completing the proof.
\end{proof}
\section{Strongly mixing systems are "almost" strongly mixing of  all orders}
In this section we will prove the following  theorem (\cref{1.MainResult} from the Introduction) which is the main result of this paper. 
\begin{thm}\label{3.MainResult}
Let $\ell\in\N$ and let $(X,\mathcal A,\mu, (T_g)_{g\in G})$ be a  measure preserving system. 
The following statements are equivalent:
\begin{enumerate}[(i)]
    \item  $(T_g)_{g\in G}$ is strongly mixing.
      \item For any $\ell$ non-degenerated and essentially distinct sequences 
    $$(\textbf g_k^{(j)})_{k\in\N}=(g^{(j)}_{k,1},...,g^{(j)}_{k,\ell})_{k\in\N},\text{ }j\in\{1,...,\ell\},$$
   in $G^{\ell}$, there exists an infinite $S\subseteq\N$ such that for any  $A_0,...,A_\ell\in\mathcal A$,
    \begin{equation}
        \rlim{\alpha\in S^{(\ell)}}\mu(A_0\cap T_{ g^{(1)}_\alpha}A_1\cap \cdots\cap T_{ g^{(\ell)}_\alpha}A_\ell)=\prod_{j=0}^\ell\mu(A_j).
    \end{equation}
    \item For any $\epsilon>0$ and any $A_0,...,A_\ell\in\mathcal A$, the set 
    $$R_\epsilon(A_0,...,A_\ell)=\{(g_1,...,g_\ell)\in G^\ell\,|\,|\mu(A_0\cap T_{g_1}A_1\cap\cdots \cap T_{g_\ell}A_\ell)-\prod_{j=0}^\ell \mu( A_j)|<\epsilon\}$$
    is $\tilde \Sigma_\ell^*$ in $G^\ell$.
    \item For any $\epsilon>0$ and any $A_0,A_1\in\mathcal A$, the set $R_\epsilon(A_0,A_1)$ is $\Sigma_\ell^*$ in $G$.
    \end{enumerate}
\end{thm}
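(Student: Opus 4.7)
The plan is to establish the cyclic chain of implications $(i)\Rightarrow(ii)\Rightarrow(iii)\Rightarrow(iv)\Rightarrow(i)$, with the last implication being the principal technical challenge. The implication $(i)\Rightarrow(ii)$ is essentially immediate from \cref{2.MainResult}: strong mixing of $(T_g)_{g\in G}$ guarantees that any sequence in $G$ tending to infinity yields a sequence of operators with the mixing property, and the non-degeneracy together with essential distinctness of the $(\textbf g_k^{(j)})_{k\in\N}$ ensure that the coordinate sequences $(g_{k,t}^{(j)})_{k\in\N}$ and their pairwise differences $(g_{k,t}^{(j)}-g_{k,t}^{(i)})_{k\in\N}$ all tend to infinity, thereby supplying the hypotheses of \cref{2.MainResult}.

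For $(ii)\Rightarrow(iii)$, given a $\tilde\Sigma_\ell$ set $E\subseteq G^\ell$ built from sequences $(\textbf h_k^{(j)})_{k\in\N}=(h_{k,1}^{(j)},\ldots,h_{k,\ell}^{(j)})_{k\in\N}$, I consider the transposed family $(\textbf g_k^{(t)})_{k\in\N}:=(h_{k,t}^{(1)},\ldots,h_{k,t}^{(\ell)})_{k\in\N}$. Non-degeneracy of each $(\textbf h_k^{(j)})_{k\in\N}$ translates to non-degeneracy of each $(\textbf g_k^{(t)})_{k\in\N}$, and the coordinate-wise grow-apart property inside each $\textbf h_k^{(j)}$ becomes essential distinctness across $t$; applying $(ii)$ and picking any $\alpha\in S^{(\ell)}$ with $\min\alpha$ sufficiently large produces a point of $E\cap R_\epsilon(A_0,\ldots,A_\ell)$. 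For $(iii)\Rightarrow(iv)$, I specialize by setting $A_2=\cdots=A_\ell=X$, which makes $R_\epsilon(A_0,A_1)\times G^{\ell-1}$ a $\tilde\Sigma_\ell^*$ subset of $G^\ell$; any $\Sigma_\ell$ set $E=\{g_{k_1}^{(1)}+\cdots+g_{k_\ell}^{(\ell)}\}\subseteq G$ then lifts to a $\tilde\Sigma_\ell$ set in $G^\ell$ with first-coordinate projection equal to $E$ by padding each $(g_k^{(j)})_{k\in\N}$ with $\ell-1$ shifted sequences $(g_k^{(j)}+s_k^{(j,t)})_{k\in\N}$ whose shifts are chosen in $G$ so as to enforce non-degeneracy and within-vector grow-apart (always possible in a countable abelian group by exhausting $G$ through a suitable enumeration).

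The hard implication $(iv)\Rightarrow(i)$ I argue contrapositively. Assuming $(T_g)_{g\in G}$ is not strongly mixing, choose $A_0,A_1\in\mathcal A$, $\epsilon_0>0$, and $(g_n)\to\infty$ with $\mu(A_0\cap T_{g_n}A_1)\to c$ where $|c-\mu(A_0)\mu(A_1)|\geq\epsilon_0$. Take $\ell$ disjoint subsequences $(g_{n_k^{(j)}})_{k\in\N}$ of $(g_n)$ and form the coupling-valued $\N^{(\ell)}$-sequence $\lambda_\alpha(B\times C):=\mu(B\cap T_{g_\alpha}C)$ with $g_\alpha=g_{n_{k_1}^{(1)}}+\cdots+g_{n_{k_\ell}^{(\ell)}}$. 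By \cref{1.RBolzanoWierstrass} applied in the compact space $\mathcal C(\textbf X,\textbf X)$ of self-couplings of $\textbf X=(X,\mathcal A,\mu)$, together with \cref{2.EquivalentLimits}, pass to an infinite $S\subseteq\N$ along which $\rlim{\alpha\in S^{(\ell)}}\lambda_\alpha=\lambda$ for some coupling $\lambda$. If $\lambda\neq\mu\otimes\mu$, then a sufficiently far tail of the $\Sigma_\ell$ set $\{g_\alpha:\alpha\in S^{(\ell)}\}$ sits entirely inside $R_\epsilon(A_0,A_1)^c$ for $\epsilon=|\lambda(A_0\times A_1)-\mu(A_0)\mu(A_1)|/2$, contradicting $(iv)$.

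The core obstacle is therefore forcing $\lambda\neq\mu\otimes\mu$: a priori the $\mathcal R$-limit over $\ell$-fold sums could \emph{wash out} the non-independence present at the single-sequence level. The plan to circumvent this is to invoke the iterated-limit characterization of $\mathcal R$-limits (\cref{2.ItteratedLimits}) together with \cref{2.PropTheIndependentjoining}: by taking the innermost subsequence to coincide with the bad sequence $(g_n)$ and unrolling the iterated limits from inside out, one shows that if $\lambda$ were to equal $\mu\otimes\mu$ for every choice of disjoint subsequences, then \cref{2.PropTheIndependentjoining} could be applied in a cascading fashion to force $(T_{g_n})$ itself to have the mixing property, contradicting the choice of $(g_n)$. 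This cascade, resting squarely on the coupling machinery built in Section~2, is the technical heart of the argument.
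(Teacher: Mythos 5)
Your chain $(i)\Rightarrow(ii)\Rightarrow(iii)\Rightarrow(iv)$ is correct and is essentially the paper's argument: $(i)\Rightarrow(ii)$ is exactly the application of \cref{2.MainResult}; your ``transposition'' in $(ii)\Rightarrow(iii)$ is the observation already recorded in the paper relating non-degenerated, essentially distinct sequences in $G^\ell$ to $\tilde\Sigma_\ell$ sets; and your ``padding'' of a $\Sigma_\ell$ set in $G$ by shifted companion sequences in $(iii)\Rightarrow(iv)$ is the same construction the paper uses (choosing $h^{(2)},\dots,h^{(\ell)}$ tending to infinity with pairwise differences tending to infinity) before intersecting with $R_\epsilon(A_0,A_1,X,\dots,X)$. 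A small slip in your $(iv)\Rightarrow(i)$ setup: if $\lambda\neq\mu\otimes\mu$, the witnessing rectangle need not be $A_0\times A_1$, so one must quantify over all pairs of measurable sets; this is harmless.

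The genuine gap is the ``cascade'' at the heart of $(iv)\Rightarrow(i)$. \cref{2.PropTheIndependentjoining} takes the mixing property of the sequence $(T_k)$ as a \emph{hypothesis} and concludes that the limit coupling is a product; it cannot be run in reverse, and no iteration of it extracts the mixing property of $(T_{g_n})$ from the knowledge that all the $\ell$-fold $\mathcal R$-limit couplings equal $\mu\otimes\mu$. To see what your hypothesis actually yields, pass to one master subsequence along which $T_{g_{n_j}}\to A$ in the weak operator topology ($A$ a contraction commuting with the action) and take all $\ell$ disjoint subsequences inside it; unrolling the iterated limits (as in \cref{2.ItteratedLimits}) turns ``$\lambda=\mu\otimes\mu$ for every such choice'' into the single statement that $\langle \overline{f_0},A^{\ell}f_1\rangle=0$ for all mean-zero $f_0,f_1$, i.e.\ $A^{\ell}=0$ on the orthocomplement of the constants, while the failure of strong mixing gives $A\neq 0$ there. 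For a general contraction $A^{\ell}=0$ does not force $A=0$, so no contradiction follows from coupling bookkeeping alone; ruling out this nilpotent scenario is the actual content of $(iv)\Rightarrow(i)$ and requires a spectral ingredient. The paper supplies it via Bochner's theorem: the weak limit $K$ of the characters $\phi_{g_{k_j}}$ in $L^2(\rho)$ satisfies $K^{\ell}=H=0$, hence $K=0$ because $K$ is a genuine function (no nilpotency for multiplication operators), and then the cyclic subspace $\mathcal H_f$ argument upgrades this to $T_{g_{k_j}}f\to 0$ weakly. (Equivalently, one could note that $A$ commutes with every $T_g$, hence with $A^{*}$ by separate WOT-continuity of multiplication, so $A$ is normal and a nilpotent normal operator vanishes.) Your proposal contains neither ingredient, so the final implication is not proved as written.
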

\begin{proof}
(i)$\implies$(ii): Note that since $(T_g)_{g\in G}$ is strongly mixing,  
for any $t\in\{1,...,\ell\}$ and any  $j\in\{1,...,\ell\}$, $(T_{g_{k,t}^{(j)}})_{k\in\N}$ has the mixing property and that for any $t$ and any $i\neq j$, $(T_{(g_{k,t}^{(j)}-g_{k,t}^{(i)})})_{k\in\N}$ also has the mixing property. Thus (ii) follows from \cref{2.MainResult}.\\

(ii)$\implies$(iii): By (ii), we have that for any $\epsilon>0$, any $A_0,...,,A_\ell\in\mathcal A$ and any $\ell$ non-degenerated and essentially distinct sequences 
    $$(\textbf g_k^{(j)})_{k\in\N}=(g^{(j)}_{k,1},...,g^{(j)}_{k,\ell})_{k\in\N},\text{ }j\in\{1,...,\ell\},$$
   in $G^{\ell}$, there exists an $\alpha\in\N^{(\ell)}$ such that
   $$(g_\alpha^{(1)},...,g_\alpha^{(\ell)})\in R_\epsilon(A_0,...,A_\ell),$$
  which implies that $R_\epsilon(A_0,...,A_\ell)$ is $\tilde\Sigma_\ell^*$.\\
  
  (iii)$\implies$(iv): Let $\epsilon>0$, let $A_0,A_1\in\mathcal A$ and let 
  $(\textbf g^{(1)}_k)_{k\in\N}=(g_{k,1}^{(1)},...,g_{k,\ell}^{(\ell)})_{k\in\N}$ be a non-degenerated seequence in $G^\ell$. In order to prove that $\mathcal R_\epsilon(A_0,A_1)$ is $\Sigma_\ell^*$, it suffices to show that for some $\alpha\in\N^{(\ell)}$, $g_\alpha^{(1)}\in\mathcal R_\epsilon(A_0,A_1)$.\\
   Note that for any sequence  $(h^{(1)}_k)_{k\in\N}$ in $G$ with $\lim_{k\rightarrow\infty}h_k^{(1)}=\infty$ one can pick sequences $(h^{(2)}_k)_{k\in\N}$,...,
   $(h^{(\ell)}_k)_{k\in\N}$
   in $G$ with the property that  for any distinct $i,j\in\{1,...,\ell\}$, $$\lim_{k\rightarrow\infty}h^{(j)}_k=\infty\text{ and } \lim_{k\rightarrow\infty}(h^{(j)}_k-h^{(i)}_k)=\infty.$$  
   Hence, one can find non-degenerated sequences $(\textbf g_k^{(j)})_{k\in\N}$ in $G^\ell$, $j\in\{2,...,\ell\}$, such that $(\textbf g^{(1)}_k)_{k\in\N}$,...,
   $(\textbf g^{(\ell)}_k)_{k\in\N}$ are essentially distinct. By (iii), there exists an $\alpha\in \N^{(\ell)}$ for which 
   $$(g^{(1)}_\alpha,...,g^{(\ell)}_\alpha)\in\mathcal R_\epsilon(A_0,A_1,\underbrace{X,...,X}_{\ell-1\text{ times}}).$$ 
   This implies that $g^{(1)}_\alpha\in R_\epsilon(A_0,A_1)$.\\
  
  (iv)$\implies$(i):  Let $f\in L^2(\mu)$ be such that $\int_Xf\text{d}\mu=0$ and $\|f\|_{L^2}=1$. We will show that $\lim_{g\rightarrow\infty}T_gf=0$ in the weak topology of $L^2(\mu)$. To do this, it suffices to prove that for any sequence $(g_k)_{k\in\N}$ in $G$ with $\lim_{k\rightarrow\infty}g_k=\infty$, there exists an increasing sequence $(k_j)_{j\in\N}$ in $\N$  with $\lim_{j\rightarrow\infty}T_{g_{k_j}}f=0$. \\  
Note that 
$$\sigma(g)=\int_X\overline f T_g f\text{d}\mu,\,g\in G$$
is a positive definite function and hence, by Bochner's theorem, there is a unique Borel probability measure  $\rho$ on $\hat G$, the Pontryagin dual of $G$, with the property that for all $g\in G$,
\begin{equation}\label{3.DefnOfRo}
\int_X\overline fT_gf\text{d}\mu=\int_{\hat G} \phi_g(\chi)\text{d}\rho(\chi),
\end{equation}
where for each $\chi\in \hat G$ and each $g\in G$, $\phi_g(\chi)=\chi(g)$.\\
Let now $(g_k)_{k\in\N}$ be a sequence in $G$ with $\lim_{k\rightarrow\infty}g_k=\infty$. Let $(\textbf g_k)_{k\in\N}=(\underbrace{g_k,...,g_k}_{\ell\text{ times}})_{k\in\N}$ (note that $(\textbf g_k)_{k\in\N}$ is a non-degenerated sequence in $G^\ell$). We claim that there exists an increasing sequence   $(k_j)_{j\in\N}$ in $\N$ such that:
\begin{enumerate}
\item For some $K\in  L^2(\rho)$, 
\begin{equation}\label{3.AccumulationPoint}
K=\lim_{j\rightarrow\infty}\phi_{g_{k_j}}
\end{equation}
in the weak topology of $L^2(\rho)$.
\item Let $S=\{k_j\,|\,j\in\N\}$. There exists $H\in  L^2(\rho)$ such that 
\begin{equation}\label{3.AccumulationPointRLimit}
H=\rlim{\alpha\in S^{(\ell)}}\phi_{g_\alpha}=\lim_{j_1\rightarrow\infty}\cdots\lim_{j_\ell\rightarrow\infty}\phi_{(g_{k_{j_1}}+\cdots+g_{k_{j_\ell}})}
\end{equation}
in the weak topology of $L^2(\rho)$.
\item  For any $A_0,A_1\in\mathcal A$,  there exists a real number $r_{A_0,A_1}$ such that 
  \begin{equation}\label{3.PreMixingExpresionOnMainResult}
  \rlim{\alpha\in S^{(\ell)}}\mu(A_0\cap T_{-g_\alpha}A_1)=r_{A_0,A_1}.
  \end{equation}
\end{enumerate}
To establish the existence of such a sequence, one first invokes the   pre-compactness of the set $\{\phi_{g}\,|\,g\in G\}$ in the  weak topology of the set $L^2(\rho)$ to obtain an increasing  sequence $(k'_j)_{j\in\N}$ for which \eqref{3.AccumulationPoint} holds. Moreover, by using \cref{1.ItteratedLimitsRemark}, one can find a subsequence $(k_j'')_{j\in\N}$ of $(k_j')_{j\in\N}$ for which \eqref{3.AccumulationPointRLimit} holds for $S=\{k''_j\,|\,j\in\N\}$. Finally,  by a diagonalization argument, we can pick a subsequence $(k_j)_{j\in\N}$ of $(k''_j)_{j\in\N}$ for which \eqref{3.PreMixingExpresionOnMainResult} holds for  any $A_0,A_1$ from a countable dense subset of $\mathcal A$. If follows (by a standard approximation argument) that \eqref{3.PreMixingExpresionOnMainResult} holds for any $A_0,A_1\in\mathcal A$.\\

By (iv),  for every $A_0,A_1\in\mathcal A$, $r_{A_0,A_1}=\mu(A_0)\mu(A_1)$ (otherwise we would be able to find an $\epsilon>0$ for which the set $\mathcal R_\epsilon(A_0,A_1)$ is not $\Sigma_\ell^*$).
Since the linear combinations of indicator functions are dense in $L^2(\mu)$, it follows that  for any $f_1,f_2\in L^2(\mu)$,
\begin{equation}\label{3.GoingToProduct}
  \rlim{\alpha\in S^{(\ell)}}\int_X f_1 T_{g_\alpha}f_2\text{d}\mu=\int_X f_1\text{d}\mu\int_Xf_2\text{d}\mu.
\end{equation}
It follows from \eqref{3.DefnOfRo} and \eqref{3.GoingToProduct} that for any $g\in G$,
\begin{multline}\label{3.H=0}
\int_{\hat G} \overline{\phi_g(\chi)}H(\chi)\text{d}\rho(\chi)=\rlim{\alpha\in S^{(\ell)}}\int_{\hat G} \overline{\phi_{g}(\chi)}\phi_{g_\alpha}(\chi)\text{d}\rho(\chi)\\
=\rlim{\alpha\in S^{(\ell)}}\int_{\hat G} \phi_{-g}(\chi)\phi_{g_\alpha}(\chi)\text{d}\rho(\chi)=\rlim{\alpha\in S^{(\ell)}}\int_{\hat G} \phi_{(g_\alpha-g)}(\chi)\text{d}\rho(\chi)\\
=\rlim{\alpha\in S^{(\ell)}}\int_{X}\overline fT_{g_\alpha-g}f\text{d}\mu=\rlim{\alpha\in S^{(\ell)}}\int_{X}T_{g}\overline fT_{g_\alpha}f\text{d}\mu=\int_X\overline f\text{d}\mu\int_X f\text{d}\mu=0. 
\end{multline}
Since the linear combinations of the characters $\phi_g,\, g\in G$, are dense in $L^2(\rho)$, it follows from \eqref{3.H=0} that $H=0$. By \eqref{3.AccumulationPoint} and \eqref{3.AccumulationPointRLimit}, we have 
\begin{multline*}
0=H=\rlim{\alpha\in S^{(\ell)}}\phi_{g_\alpha}=\lim_{j_1\rightarrow\infty}\cdots\lim_{j_\ell \rightarrow\infty}\phi_{(g_{k_{j_1}}+\cdots+g_{k_{j_\ell}})}
=\lim_{j_1\rightarrow\infty}\cdots\lim_{j_{\ell}\rightarrow\infty}\prod_{t=1}^\ell \phi_{g_{k_{j_t}}}\\
=(\lim_{j_1\rightarrow\infty}\cdots\lim_{j_{\ell-1}\rightarrow\infty}\prod_{t=1}^{\ell-1} \phi_{g_{k_{j_t}}})(\lim_{j_\ell\rightarrow\infty}\phi_{g_{k_{j_\ell}}})=
(\lim_{j_1\rightarrow\infty}\cdots\lim_{j_{\ell-1}\rightarrow\infty}\prod_{t=1}^{\ell-1} \phi_{g_{k_{j_t}}})(\lim_{j\rightarrow\infty}\phi_{g_{k_{j}}})\\
=\cdots=\lim_{j_1\rightarrow\infty}\phi_{g_{k_{j_1}}}(\prod_{t=2}^{\ell}\lim_{j\rightarrow\infty} \phi_{g_{k_{j}}}) =\prod_{t=1}^\ell(\lim_{j\rightarrow\infty}\phi_{g_{k_j}})= K^{\ell}.
\end{multline*}
So, $K^\ell=0$  and hence $K=0$.\\
Consider now the closed and $(T_g)_{g\in G}$-invariant subspace $\mathcal H_f=\overline{\text{span}(\{T_g f\,|\,g\in G\})}\subseteq L^2(\mu)$. Since $K=0$, it follows from \eqref{3.DefnOfRo} and \eqref{3.AccumulationPoint} that for each $g\in G$,
$$\lim_{j\rightarrow\infty} \int_X T_g\overline fT_{g_{k_j}}f\text{d}\mu=\lim_{j\rightarrow\infty} \int_X \overline fT_{(g_{k_j}-g)}f\text{d}\mu=\lim_{j\rightarrow\infty}\int_{\hat G}\phi_{(g_{k_j}-g)}\text{d}\rho=\lim_{j\rightarrow\infty}\int_{\hat G}\overline{\phi_{g}}\phi_{g_{k_j}}\text{d}\rho=0.$$ 
It follows that for any $f'\in \mathcal H_f$, $\lim_{j\rightarrow\infty}\int_X\overline {f'}T_{g_{k_j}}f\text{d}\mu=0$. Noting that  $L^2(\mu)=\mathcal H_f\oplus \mathcal H_f^\perp$, we obtain that $\lim_{j\rightarrow\infty}T_{g_{k_j}}f=0$ in the weak topology of $L^2(\mu)$. In light of the remarks made at the begining of the proof of (iv)$\implies$(i), this, in turn, implies that $\lim_{g\rightarrow\infty}T_gf=0$. We are done. 
\end{proof}
\section{Some "diagonal" results for strongly mixing systems}
In order to give the reader the flavor of the main theme of this section, we start by formulating a slightly enhanced form of \cref{1.ZDiagonalResult} from the Introduction. (This theorem is a rather special case of the results of "diagonal" nature to be proved in this section.) 
\begin{prop}\label{4.ZExample}
Let $(X,\mathcal A,\mu, T)$ be a measure preserving system and let $a_1,...,a_\ell$  be non-zero distinct integers. Then $T$ is strongly mixing if and only if  for any $A_0,...,A_\ell\in\mathcal A$ and any $\epsilon>0$, the set 
$$\{n\in\Z\,|\,|\mu(A_0\cap T^{a_1n}A_1\cap\cdots\cap T^{a_\ell n}A_\ell)-\prod_{j=0}^\ell\mu(A_j)|<\epsilon\}$$
is $\Sigma_\ell^*$.
\end{prop}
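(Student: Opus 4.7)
The result is a direct corollary of the main theorem \cref{3.MainResult} applied to the $\Z$-action generated by $T$: each direction translates into one of the equivalent formulations (i), (ii), or (iv) of that theorem.

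For the forward direction, assume $T$ is strongly mixing, fix $A_0,\ldots,A_\ell \in \mathcal A$ and $\epsilon>0$, and take an arbitrary $\Sigma_\ell$ set
$$E = \{n_{k_1}^{(1)} + \cdots + n_{k_\ell}^{(\ell)} : k_1 < \cdots < k_\ell\}$$
in $\Z$, built from sequences $(n_k^{(t)})_{k\in\N}$ with $n_k^{(t)} \to \infty$. For each $j \in \{1,\ldots,\ell\}$ put
$$\mathbf g_k^{(j)} = \bigl(a_j n_k^{(1)},\, a_j n_k^{(2)},\, \ldots,\, a_j n_k^{(\ell)}\bigr) \in \Z^\ell.$$
Since $a_j \neq 0$, each coordinate sequence tends to infinity, so $(\mathbf g_k^{(j)})_k$ is non-degenerated; since $a_i \neq a_j$ for $i \neq j$, the $t$-th coordinate differences are $(a_i - a_j) n_k^{(t)} \to \infty$, so the sequences $(\mathbf g_k^{(j)})_k$ are pairwise essentially distinct. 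Applying \cref{3.MainResult}, (i)$\Rightarrow$(ii), I obtain an infinite $S \subseteq \N$ along which
$$\rlim{\alpha \in S^{(\ell)}} \mu\bigl(A_0 \cap T^{g_\alpha^{(1)}} A_1 \cap \cdots \cap T^{g_\alpha^{(\ell)}} A_\ell\bigr) = \prod_{j=0}^\ell \mu(A_j).$$
For $\alpha = \{k_1 < \cdots < k_\ell\}$ one computes $g_\alpha^{(j)} = a_j\bigl(n_{k_1}^{(1)} + \cdots + n_{k_\ell}^{(\ell)}\bigr)$, so any $\alpha \in S^{(\ell)}$ with $\min\alpha$ sufficiently large produces an integer $n_\alpha := n_{k_1}^{(1)} + \cdots + n_{k_\ell}^{(\ell)}$ lying in $E \cap R^{a_1,\ldots,a_\ell}_\epsilon(A_0,\ldots,A_\ell)$, proving the latter set is $\Sigma_\ell^*$.

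For the converse, assume every set $R^{a_1,\ldots,a_\ell}_\epsilon(A_0,\ldots,A_\ell)$ is $\Sigma_\ell^*$. Specializing with $A_2 = \cdots = A_\ell = X$ tells us that for all $A_0, A_1 \in \mathcal A$ and all $\epsilon > 0$, the set
$$\{n \in \Z : |\mu(A_0 \cap T^{a_1 n} A_1) - \mu(A_0)\mu(A_1)| < \epsilon\}$$
is $\Sigma_\ell^*$ in $\Z$. This is precisely condition (iv) of \cref{3.MainResult} applied to the $\Z$-action generated by $S := T^{a_1}$, so (iv)$\Rightarrow$(i) yields strong mixing of $S$. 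To pass from $S$ back to $T$, I argue by contradiction: if there exist $A,B \in \mathcal A$, $\epsilon_0 > 0$ and $n_k \to \infty$ with $|\mu(A \cap T^{n_k} B) - \mu(A)\mu(B)| \geq \epsilon_0$, then by pigeonhole I pass to a subsequence with a fixed residue $r$ modulo $|a_1|$ and write $n_k = a_1 q_k + r$ with $|q_k| \to \infty$; this gives $\mu(A \cap T^{n_k} B) = \mu(A \cap S^{q_k}(T^r B)) \to \mu(A)\mu(T^r B) = \mu(A)\mu(B)$, a contradiction.

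The only delicate point is the choice of the sequences $\mathbf g_k^{(j)}$ in the forward direction: the hypotheses that the $a_j$ are both non-zero and pairwise distinct translate precisely into the non-degenerated and essentially-distinct assumptions of \cref{3.MainResult}\,(ii), and weakening either assumption would break the reduction. Everything else is bookkeeping against the main theorem together with the standard residue-class transfer.
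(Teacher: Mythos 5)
Your proof is correct and follows essentially the same route as the paper: the forward direction is exactly the argument of \cref{4.InjectiveDiagonalResult} specialized to $\phi_j(n)=a_jn$ (push the $\Sigma_\ell$-generating sequences forward to non-degenerated, essentially distinct sequences in $\Z^\ell$ and apply \cref{3.MainResult}, (i)$\implies$(ii)), and the converse is the argument of \cref{4.quasiSurjectiveDiagonalResult} (reduce to \cref{3.MainResult}, (iv)$\implies$(i), for the finite-index subaction and transfer back via residue classes). No gaps.
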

We move now to formulations of more general "diagonal" results. \\

Let $(G,+)$ be a countable abelian group, let $(X,\mathcal A,\mu, (T_g)_{g\in G})$ be a measure preserving system, let $\ell\in\N$ and let $\phi_1,...,\phi_\ell:G\rightarrow G$ be homomorphisms. For any $\epsilon>0$ and any $A_0,...,A_\ell\in\mathcal A$, define
$$R_\epsilon^{\phi_1,...,\phi_\ell}(A_0,...,A_\ell)=\{g\in G\,|\,|\mu(A_0\cap T_{\phi_1(g)}A_1\cap\cdots\cap T_{\phi_\ell(g)}A_\ell)-\prod_{j=0}^\ell\mu(A_j)|<\epsilon\}.$$

We first give two equivalent formulations of a general result which deals with finitely generated groups.
\begin{thm}\label{4.FinitelyGeneratedEquivalence}
Let $(G,+)$ be a finitely generated abelian group, let $(X,\mathcal A,\mu, (T_g)_{g\in G})$ be a measure preserving system and let the homomorphisms $\phi_1,...,\phi_\ell:G\rightarrow G$ be such that for any $j\in\{1,...,\ell\}$, $\ker(\phi_j)$ is finite and for any $i\neq j$, $\ker(\phi_j-\phi_i)$ is also finite. Then $(T_g)_{g\in G}$ is strongly mixing if and only if for any $A_0,...,A_\ell\in\mathcal A$ and any $\epsilon>0$, the set $R_\epsilon^{\phi_1,...,\phi_\ell}(A_0,...,A_\ell)$ is $\Sigma_\ell^*$.
\end{thm}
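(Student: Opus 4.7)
The plan is to use \cref{3.MainResult} as a bridge in both directions: for the forward implication I would invoke its characterization (ii), and for the converse its characterization (iv). For the forward direction, assume $(T_g)_{g \in G}$ is strongly mixing and let $E \subseteq G$ be a $\Sigma_\ell$ set presented via sequences $(g_k^{(j)})_{k\in\N}$ going to infinity, $j = 1,\ldots,\ell$. I would package these into $\ell$ auxiliary sequences in $G^\ell$ by setting
$$\mathbf{g}_k^{(i)} = \bigl(\phi_i(g_k^{(1)}),\, \phi_i(g_k^{(2)}),\, \ldots,\, \phi_i(g_k^{(\ell)})\bigr), \qquad i = 1, \ldots, \ell,$$
so that $g_\alpha^{(i)} = \phi_i\bigl(g_{k_1}^{(1)} + \cdots + g_{k_\ell}^{(\ell)}\bigr)$ for every $\alpha = \{k_1 < \cdots < k_\ell\}$. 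The assumption that each $\ker \phi_i$ is finite forces each coordinate sequence $(\phi_i(g_k^{(t)}))_{k\in\N}$ to tend to infinity (a homomorphism with finite kernel has finite fibres, hence sends sequences going to infinity to sequences going to infinity), yielding non-degeneracy. The assumption that each $\ker(\phi_j - \phi_i)$ is finite analogously gives essential distinctness, via $(\phi_j - \phi_i)(g_k^{(t)}) \to \infty$ for every coordinate $t$. Applying \cref{3.MainResult}(ii) furnishes an infinite $S \subseteq \N$ along which the $\mathcal R$-limit of the multicorrelation equals $\prod_{j=0}^\ell \mu(A_j)$, so for every $\epsilon > 0$ some $\alpha \in S^{(\ell)}$ witnesses $g_{k_1}^{(1)} + \cdots + g_{k_\ell}^{(\ell)} \in E \cap R_\epsilon^{\phi_1,\ldots,\phi_\ell}(A_0,\ldots,A_\ell)$, as required.

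For the converse, I aim to show that the hypothesis forces $R_\epsilon(A_0, A_1)$ to be $\Sigma_\ell^*$ in $G$ for every $\epsilon > 0$ and every $A_0, A_1 \in \mathcal A$; strong mixing will then follow from \cref{3.MainResult}(iv). Since $G$ is finitely generated abelian and $\ker \phi_1$ is finite, $\phi_1(G)$ has the same torsion-free rank as $G$, so $\phi_1(G)$ is a finite-index subgroup. Given a $\Sigma_\ell$ set $E \subseteq G$ coming from sequences $(h_k^{(j)})_{k\in\N}$, I would extract subsequences $(h_{m_k^{(j)}}^{(j)})_{k\in\N}$ by a greedy recursive procedure so that each such term lies in a prescribed coset $c_j + \phi_1(G)$, while the indices satisfy the interleaving $m_k^{(1)} < m_k^{(2)} < \cdots < m_k^{(\ell)}$ for every $k$; this is what makes the resulting sub-$\Sigma_\ell$ set $\tilde E$ genuinely contained in $E$. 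Setting $c = c_1 + \cdots + c_\ell$ and lifting each $h_{m_k^{(j)}}^{(j)} - c_j = \phi_1(\tilde g_k^{(j)})$ to some $\tilde g_k^{(j)} \in G$ (which still tends to infinity, by the finite-fibre property of $\phi_1$) produces a $\Sigma_\ell$ set $E' = \{\tilde g_{k_1}^{(1)} + \cdots + \tilde g_{k_\ell}^{(\ell)} : k_1 < \cdots < k_\ell\}$ with $\tilde E = c + \phi_1(E')$. The hypothesis applied to the sets $T_{-c} A_0, A_1, X, \ldots, X$ and to $\epsilon$ then produces some $g \in E'$ satisfying $|\mu(T_{-c} A_0 \cap T_{\phi_1(g)} A_1) - \mu(A_0) \mu(A_1)| < \epsilon$, and the $T_c$-invariance of $\mu$ converts this into $c + \phi_1(g) \in E \cap R_\epsilon(A_0, A_1)$.

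The main obstacle will be the bookkeeping in the converse direction: arranging the coset-restricted subsequences so that the extracted indices still respect the strict-increase condition built into the definition of a $\Sigma_\ell$ set, which is what guarantees that $\tilde E$ really is a subset of $E$. The greedy recursive extraction sketched above handles this, relying on the finiteness of $[G : \phi_1(G)]$ for a pigeonhole step at each stage. By contrast, the forward direction is essentially a functorial translation through the $\phi_i$ and reduces routinely to \cref{3.MainResult}(ii) once the kernel assumptions are read off as the non-degeneracy and essential-distinctness conditions of that theorem.
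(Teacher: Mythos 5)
Your proof is correct, and its two halves relate differently to the paper. The forward half is essentially the paper's own argument: it is the proof of \cref{4.InjectiveDiagonalResult}, pushing the generating sequences of a $\Sigma_\ell$ set through $\phi_1,\ldots,\phi_\ell$ coordinatewise, reading the finite-kernel hypotheses as the non-degeneracy and essential-distinctness conditions, and invoking \cref{3.MainResult}(ii). The converse half, however, takes a genuinely different route. The paper (via \cref{4.quasiSurjectiveDiagonalResult}) specializes the hypothesis to $R_\epsilon^{\phi_1,\ldots,\phi_\ell}(X,A_1,A_2,X,\ldots,X)=R_\epsilon^{\phi_2-\phi_1}(A_1,A_2)$, applies \cref{3.MainResult}(iv) to the re-parameterized action $(T_{(\phi_2-\phi_1)(g)})_{g\in G}$ to conclude that this sub-action is strongly mixing, and then transfers strong mixing to $(T_g)_{g\in G}$ by a sequential finite-index argument (passing to a subsequence lying in a fixed coset and translating $A_1$ by the coset representative). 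You instead stay entirely at the combinatorial level: pigeonholing over the finitely many cosets of $\phi_1(G)$, you locate inside an arbitrary $\Sigma_\ell$ set $E\subseteq G$ a translated image $c+\phi_1(E')$ of a $\Sigma_\ell$ set $E'$, apply the hypothesis to $T_{-c}A_0,A_1,X,\ldots,X$, and conclude directly that $R_\epsilon(A_0,A_1)$ is $\Sigma_\ell^*$ for the original action, finishing with \cref{3.MainResult}(iv). Your route yields a clean transfer principle for $\Sigma_\ell^*$ sets along finite-index homomorphic images and never needs to discuss mixing of an auxiliary action; the paper's route buys generality, since \cref{4.quasiSurjectiveDiagonalResult} works for arbitrary countable abelian $G$ under the weaker assumption that one of $\phi_1(G)$, $\phi_2(G)$, $(\phi_2-\phi_1)(G)$ has finite index, whereas your argument is tied to the finite index of $\phi_1(G)$ (which, in the finitely generated setting, is indeed equivalent to $\ker\phi_1$ being finite, as noted before \cref{4.FinitelyGeneratedEquivalenceIndex}).

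Two small points, neither affecting correctness. First, the interleaving $m_k^{(1)}<m_k^{(2)}<\cdots<m_k^{(\ell)}$ alone does not give $\tilde E\subseteq E$; you also need monotonicity across $k$ (e.g. $m_k^{(\ell)}<m_{k+1}^{(1)}$), which your greedy extraction does provide but which should be stated, since it is exactly what guarantees $m_{k_1}^{(1)}<m_{k_2}^{(2)}<\cdots<m_{k_\ell}^{(\ell)}$ for $k_1<\cdots<k_\ell$. Second, the finite-fibre property of $\phi_1$ is not what makes $\tilde g_k^{(j)}\to\infty$: the implication $\phi_1(\tilde g_k^{(j)})\to\infty\Rightarrow\tilde g_k^{(j)}\to\infty$ holds for any homomorphism; finite kernels are needed in the opposite direction, which is where they are used in the forward half.
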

Note that if $G$ is a finitely generated abelian group and $\phi:G\rightarrow G$ is a homomorphism, $\ker(\phi)$ is finite if and only if the index of $\phi(G)$ in $G$ is finite. It follows that \cref{4.FinitelyGeneratedEquivalence} can be formulated in the following equivalent form.
\begin{thm}\label{4.FinitelyGeneratedEquivalenceIndex}
Let $(G,+)$ be a finitely generated abelian group, let $(X,\mathcal A,\mu, (T_g)_{g\in G})$ be a measure preserving system and let the homomorphisms $\phi_1,...,\phi_\ell:G\rightarrow G$ be such that for any $j\in\{1,...,\ell\}$, the index of  $\phi_j(G)$ in $G$ is finite and for any $i\neq j$, the index  of $(\phi_j-\phi_i)$ in $G$ is also finite. Then $(T_g)_{g\in G}$ is strongly mixing if and only if for any $A_0,...,A_\ell\in\mathcal A$ and any $\epsilon>0$, the set $R_\epsilon^{\phi_1,...,\phi_\ell}(A_0,...,A_\ell)$ is $\Sigma_\ell^*$.
\end{thm}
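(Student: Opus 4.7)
The plan is to observe that \cref{4.FinitelyGeneratedEquivalenceIndex} is literally the same statement as \cref{4.FinitelyGeneratedEquivalence} once one verifies the elementary group-theoretic claim made in the paragraph between the two theorems: for any finitely generated abelian group $G$ and any homomorphism $\phi:G\to G$, the kernel $\ker(\phi)$ is finite if and only if the image $\phi(G)$ has finite index in $G$. Granted this claim, one applies it to each $\phi_j$ and to each difference $\phi_j-\phi_i$ (which is itself a homomorphism $G\to G$) to see that the hypotheses of \cref{4.FinitelyGeneratedEquivalence} and \cref{4.FinitelyGeneratedEquivalenceIndex} coincide, and the conclusion of the latter then follows at once from the former.

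To establish the key claim I would invoke the structure theorem for finitely generated abelian groups to write $G\cong\Z^r\oplus T$ where $T$ is finite (the torsion part). Define the rank of a finitely generated abelian group $H$ by $\operatorname{rk}(H)=\dim_\Q(H\otimes_\Z\Q)$. Rank is additive on short exact sequences of finitely generated abelian groups, so applying $\otimes_\Z\Q$ to the exact sequence
\[
0\longrightarrow \ker(\phi)\longrightarrow G\longrightarrow \phi(G)\longrightarrow 0
\]
yields $\operatorname{rk}(G)=\operatorname{rk}(\ker\phi)+\operatorname{rk}(\phi(G))$. A subgroup $H\leq G$ of a finitely generated abelian group is finite iff $\operatorname{rk}(H)=0$, and has finite index iff $\operatorname{rk}(H)=\operatorname{rk}(G)=r$ (since $G/H$ is then finitely generated with trivial rational span). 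The identity $\operatorname{rk}(\ker\phi)=r-\operatorname{rk}(\phi(G))$ therefore gives the advertised equivalence.

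With that equivalence in hand, the rest of the proof is purely bookkeeping: for each $j\in\{1,\dots,\ell\}$, $\ker(\phi_j)$ is finite iff $\phi_j(G)$ has finite index in $G$; for each $i\neq j$, $\ker(\phi_j-\phi_i)$ is finite iff $(\phi_j-\phi_i)(G)$ has finite index in $G$. Thus the hypotheses of the two theorems are interchangeable, and \cref{4.FinitelyGeneratedEquivalenceIndex} follows from \cref{4.FinitelyGeneratedEquivalence}.

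There is no real obstacle here; the only substantive content is the rank computation above, which is standard. The point of stating \cref{4.FinitelyGeneratedEquivalenceIndex} separately is expository, since in many concrete situations (e.g.\ $G=\Z^d$) one checks the finite-index condition on images more naturally than the finiteness of kernels.
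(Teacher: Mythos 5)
Your proposal is correct and is essentially the paper's own route: the paper likewise treats \cref{4.FinitelyGeneratedEquivalenceIndex} as an equivalent reformulation of \cref{4.FinitelyGeneratedEquivalence}, justified by the fact that for an endomorphism $\phi$ of a finitely generated abelian group, $\ker(\phi)$ is finite if and only if $\phi(G)$ has finite index. The paper merely asserts this group-theoretic fact, while your rank-additivity argument (using flatness of $\Q$ over $\Z$) is a correct and standard justification of it.
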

We are going now to formulate and prove variants of Theorems \ref{4.FinitelyGeneratedEquivalence} and \ref{4.FinitelyGeneratedEquivalenceIndex} which pertain to mixing actions of general (not necessarily finitely generated) countable abelian groups. Unlike Theorems \ref{4.FinitelyGeneratedEquivalence} and \ref{4.FinitelyGeneratedEquivalenceIndex}, the following two  theorems are not equivalent. We will provide the relevant counterexamples at the end of this section.
\begin{thm}\label{4.InjectiveDiagonalResult}
Let $(G,+)$ be a countable abelian group, let $(X,\mathcal A,\mu,(T_g)_{g\in G})$ be a strongly mixing system and let the homomorphisms  $\phi_1,...,\phi_\ell:G\rightarrow G$ be such that for any $j\in\{1,...,\ell\}$, $\ker(\phi_j)$ is finite and for any $i\neq j$, $\ker(\phi_j-\phi_i)$ is also finite. For any non-degenerated sequence $(\textbf g_k)_{k\in\N}=(g_{k,1},...,g_{k,\ell})_{k\in\N}$ in $G^\ell$ there exists an infinite set $S\subseteq \N$ such that for any 
$A_0,...,A_\ell \in\mathcal A$,
$$\rlim{\alpha\in S^{(\ell)}}\mu(A_0\cap T_{\phi_1(g_\alpha)}A_1\cap\cdots\cap T_{\phi_\ell(g_\alpha)}A_\ell)=\prod_{j=0}^\ell \mu(A_j).$$
Equivalently, for any 
$A_0,...,A_\ell \in\mathcal A$ and any $\epsilon>0$, the set $R_\epsilon^{\phi_1,...,\phi_\ell}(A_0,...,A_\ell)$ is $\Sigma_\ell^*$. 
\end{thm}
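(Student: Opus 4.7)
The plan is to deduce \cref{4.InjectiveDiagonalResult} directly from \cref{2.MainResult} by cooking up the right auxiliary sequences in $G^\ell$. Given the non-degenerated sequence $(\textbf g_k)_{k\in\N}=(g_{k,1},\ldots,g_{k,\ell})_{k\in\N}$, for each $j\in\{1,\ldots,\ell\}$ I would set
\[
(\textbf g_k^{(j)})_{k\in\N}=(\phi_j(g_{k,1}),\ldots,\phi_j(g_{k,\ell}))_{k\in\N}.
\]
Since $\phi_j$ is a homomorphism, for every $\alpha=\{k_1,\ldots,k_\ell\}\in\N^{(\ell)}$ we have $g_\alpha^{(j)}=\phi_j(g_{k_1,1})+\cdots+\phi_j(g_{k_\ell,\ell})=\phi_j(g_\alpha)$, so a successful application of \cref{2.MainResult} will yield exactly the desired $\mathcal R$-limit.

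The key step is to verify the two mixing-property hypotheses of \cref{2.MainResult}. First, I would observe the elementary fact that if $\phi:G\to G$ is a homomorphism with finite kernel, then for any sequence $(h_k)_{k\in\N}$ in $G$ with $h_k\to\infty$ one has $\phi(h_k)\to\infty$: indeed, for any finite $F\subseteq G$, the preimage $\phi^{-1}(F)$ is a union of at most $|F|$ cosets of $\ker\phi$, hence finite, so $h_k$ eventually escapes it. Applied to the non-degenerated sequence $(g_{k,t})_{k\in\N}$ and to the homomorphisms $\phi_j$ (finite kernel by hypothesis), this gives $\phi_j(g_{k,t})\to\infty$ in $G$ for every $t,j$, and hence, by strong mixing of $(T_g)_{g\in G}$, the sequence $(T_{\phi_j(g_{k,t})})_{k\in\N}=(T_{g_{k,t}^{(j)}})_{k\in\N}$ has the mixing property. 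The same lemma applied to $\phi_j-\phi_i$ (also of finite kernel by hypothesis) gives $(\phi_j-\phi_i)(g_{k,t})=g_{k,t}^{(j)}-g_{k,t}^{(i)}\to\infty$, so $(T_{g_{k,t}^{(j)}-g_{k,t}^{(i)}})_{k\in\N}$ also has the mixing property. Both hypotheses of \cref{2.MainResult} are thus satisfied, and that proposition produces an infinite $S\subseteq\N$ so that for every $A_0,\ldots,A_\ell\in\mathcal A$,
\[
\rlim{\alpha\in S^{(\ell)}}\mu(A_0\cap T_{\phi_1(g_\alpha)}A_1\cap\cdots\cap T_{\phi_\ell(g_\alpha)}A_\ell)=\prod_{j=0}^\ell\mu(A_j).
\]

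For the equivalent reformulation in terms of $\Sigma_\ell^*$, fix $\epsilon>0$ and $A_0,\ldots,A_\ell\in\mathcal A$, and let $E\subseteq G$ be an arbitrary $\Sigma_\ell$ set, so $E=\{g_{k_1,1}+\cdots+g_{k_\ell,\ell}:k_1<\cdots<k_\ell\}$ for sequences $(g_{k,t})_{k\in\N}$ with $g_{k,t}\to\infty$; these assemble into a non-degenerated sequence $(\textbf g_k)_{k\in\N}$ in $G^\ell$. Applying the $\mathcal R$-limit statement just proved, the value inside the $\mathcal R$-limit is eventually within $\epsilon$ of $\prod_j\mu(A_j)$ for $\alpha\in S^{(\ell)}$, so any such $\alpha$ furnishes an element $g_\alpha\in E\cap R_\epsilon^{\phi_1,\ldots,\phi_\ell}(A_0,\ldots,A_\ell)$, showing the latter set is $\Sigma_\ell^*$.

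Since the argument mostly reduces to checking the hypotheses of \cref{2.MainResult}, there is no serious obstacle. The only mild subtlety is the coset/preimage observation that $\ker\phi$ finite together with $h_k\to\infty$ forces $\phi(h_k)\to\infty$; this is exactly where the finite-kernel assumptions on $\phi_j$ and $\phi_j-\phi_i$ get used, and where the finitely-generated-versus-general dichotomy discussed in \cref{4.FinitelyGeneratedEquivalence} vs.\ \cref{4.FinitelyGeneratedEquivalenceIndex} will enter elsewhere in the section.
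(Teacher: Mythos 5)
Your proposal is correct and follows essentially the same route as the paper: you form the sequences $(\phi_j(g_{k,1}),\ldots,\phi_j(g_{k,\ell}))_{k\in\N}$, use the finite-kernel hypotheses to see they are non-degenerated and essentially distinct (equivalently, that the relevant sequences of transformations have the mixing property), and then invoke \cref{2.MainResult}, which is exactly how the paper's proof proceeds via \cref{3.MainResult}(ii). Your explicit derivation of the $\Sigma_\ell^*$ reformulation from the $\mathcal R$-limit statement is also the intended one.
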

\begin{proof}
Since for any distinct $i,j\in\{1,...,\ell\}$, $\ker(\phi_j)$ and $\ker(\phi_j-\phi_i)$ are both finite, we have  for each $t\in\{1,...,\ell\}$,
$$\lim_{k\rightarrow\infty}\phi_j(g_{k,t})=\infty\text{ and }\lim_{k\rightarrow\infty}(\phi_j(g_{k,t})-\phi_i(g_{k,t}))=\infty.$$
For each $j\in\{1,...,\ell\}$, let 
$$(\textbf g^{(j)}_k)_{k\in\N}=(\phi_j(g_{k,1}),...,\phi_j(g_{k,\ell}))_{k\in\N}.$$ 
Then the sequences $(\textbf g^{(1)}_k)_{k\in\N},...,(\textbf g^{(\ell)}_k)_{k\in\N}$ are non-degenerated and essentially distinct.  By \cref{3.MainResult}, (ii), there exists an infinite set $S\subseteq \N$ such that for any $A_0,...,A_\ell\in\mathcal A$,
\begin{multline*}
\rlim{\alpha\in S^{(\ell)}}\mu(A_0\cap T_{\phi_1(g_\alpha)}A_1\cap\cdots\cap T_{\phi_\ell(g_\alpha)}A_\ell)\\
=\rlim{\alpha\in S^{(\ell)}}\mu(A_0\cap T_{g^{(1)}_\alpha}A_1\cap\cdots\cap T_{g^{(\ell)}_\alpha}A_\ell)=\mu(\prod_{j=0}^\ell A_j).
\end{multline*}
\end{proof}
\begin{rem}\label{4.CombinatorialRemark}
The goal of this remark is to indicate an alternative way of proving \cref{4.InjectiveDiagonalResult}. Let $G$ and $\phi_1,...,\phi_\ell$ be as in the hypothesis of \cref{4.InjectiveDiagonalResult}. In Section 5 we will show that if $E$ is a $\tilde\Sigma_\ell^*$ set in $G^\ell$, then $\{g\in G\,|\,(\phi_1(g),...,\phi_\ell(g))\in E\}$ is a $\Sigma_\ell^*$ set in $G$ (see \cref{5.3.UsefulInSection4}). Thus, for any measure preserving system $(X,\mathcal A,\mu, (T_g)_{g\in G})$, any $A_0,....,A_\ell\in\mathcal A$ and any $\epsilon>0$, if $R_\epsilon(A_0,...,A_\ell)$ is a $\tilde\Sigma_\ell^*$ set, then $R_\epsilon^{\phi_1,...,\phi_\ell}(A_0,...,A_\ell)$ is a $\Sigma_\ell^*$ set. One can now invoke \cref{3.MainResult}, (iii).
\end{rem}
The next result complements \cref{4.InjectiveDiagonalResult}. Note that it provides a somewhat stronger version of one of the directions in \cref{4.FinitelyGeneratedEquivalenceIndex}.
\begin{thm}\label{4.quasiSurjectiveDiagonalResult}
Let $(G,+)$ be a countable abelian group, let $(X,\mathcal A,\mu,(T_g)_{g\in G})$ be a measure preserving system and let the homomorphisms $\phi_1,...,\phi_\ell:G\rightarrow G$ be such that either one of $\phi_1(G)$, $\phi_2(G)$ or $(\phi_2-\phi_1)(G)$ has finite index in $G$. If for all $A_0,...,A_\ell\in\mathcal A$ and all $\epsilon>0$ the set 
$R_\epsilon^{\phi_1,...,\phi_\ell}(A_0,...,A_\ell)$ is $\Sigma_\ell^*$, then $(T_g)_{g\in G}$ is strongly mixing. 
\end{thm}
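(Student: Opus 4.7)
The plan is to reduce the conclusion to condition (iv) of \cref{3.MainResult}: that is, to show that for every $A_0, A_1 \in \mathcal A$ and every $\epsilon > 0$, the set $R_\epsilon(A_0, A_1)$ is $\Sigma_\ell^*$ in $G$. Strong mixing of $(T_g)_{g \in G}$ will then follow from the implication (iv)$\Rightarrow$(i) of \cref{3.MainResult}. The first step is to extract, in each of the three cases, the intermediate statement that $\phi^{-1}(R_\epsilon(A_0, A_1))$ is $\Sigma_\ell^*$ in $G$, where $\phi$ denotes the distinguished homomorphism $\phi_1$, $\phi_2$, or $\phi_2 - \phi_1$. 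In the first case this is obtained by specializing the hypothesis to $R_\epsilon^{\phi_1, \ldots, \phi_\ell}(A_0, A_1, X, \ldots, X)$; in the second, one uses $R_\epsilon^{\phi_1, \ldots, \phi_\ell}(A_0, X, A_1, X, \ldots, X)$; in the third, one uses $R_\epsilon^{\phi_1, \ldots, \phi_\ell}(X, A_0, A_1, X, \ldots, X)$ and then absorbs $T_{-\phi_1(g)}$ using that it preserves~$\mu$.

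The main step is to transfer the $\Sigma_\ell^*$ property from $\phi^{-1}(R_\epsilon(A_0, A_1))$ to $R_\epsilon(A_0, A_1)$ itself, using the finite index of $\phi(G)$ in $G$. Let $E = \{h_{k_1}^{(1)} + \cdots + h_{k_\ell}^{(\ell)} : k_1 < \cdots < k_\ell\}$ be an arbitrary $\Sigma_\ell$ set in $G$; we must show $E \cap R_\epsilon(A_0, A_1) \neq \emptyset$. Fix coset representatives $c_1, \ldots, c_n$ for $G/\phi(G)$, and let $i_j(k) \in \{1, \ldots, n\}$ be the coset index for which $h_k^{(j)} \in c_{i_j(k)} + \phi(G)$. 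By pigeonhole applied to the tuple-valued function $k \mapsto (i_1(k), \ldots, i_\ell(k))$, there is an infinite $S \subseteq \N$ along which this tuple is constantly equal to some $(i_1, \ldots, i_\ell)$. Passing to the subsequence indexed by $S$ and writing $h_k^{(j)} = c_{i_j} + \phi(g_k^{(j)})$ for any choice of lift $g_k^{(j)} \in G$, one verifies that $g_k^{(j)} \to \infty$ in $G$: were a subsequence to lie in a finite set $F \subseteq G$, then $\phi$ would send it into the finite set $\phi(F)$, contradicting the fact that $\phi(g_k^{(j)}) = h_k^{(j)} - c_{i_j} \to \infty$.

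To conclude, set $c = c_{i_1} + \cdots + c_{i_\ell}$ and apply the diagonal $\Sigma_\ell^*$ statement of the first step with $A_0$ replaced by $T_{-c} A_0$. Since $R_\epsilon(T_{-c} A_0, A_1) = R_\epsilon(A_0, A_1) - c$, the set $\phi^{-1}(R_\epsilon(A_0, A_1) - c)$ is $\Sigma_\ell^*$, and therefore meets the $\Sigma_\ell$ set $\{g_{k_1}^{(1)} + \cdots + g_{k_\ell}^{(\ell)} : k_1 < \cdots < k_\ell\}$ built from the lifts. Translating back by $c$ produces an element of the sub-$\Sigma_\ell$ set of $E$ lying in $R_\epsilon(A_0, A_1)$, completing the reduction. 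I expect the main obstacle to be precisely the constant offset $c$ produced by the coset decomposition; the trick of passing from $A_0$ to $T_{-c} A_0$ is what absorbs it and allows the $\Sigma_\ell^*$ conclusion to survive the lifting.
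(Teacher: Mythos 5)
Your proof is correct, and it exploits the finite-index hypothesis in a genuinely different place than the paper does. The paper's argument is two-stage: it first specializes the hypothesis exactly as you do (to $R^{\phi_1,\ldots,\phi_\ell}_\epsilon(X,A_1,A_2,X,\ldots,X)$, which after absorbing $T_{\phi_1(g)}$ is $(\phi_2-\phi_1)^{-1}(R_\epsilon(A_1,A_2))$), applies the implication (iv)$\implies$(i) of \cref{3.MainResult} to conclude that the \emph{restricted} action $(T_{(\phi_2-\phi_1)(g)})_{g\in G}$ is strongly mixing, and only then uses finite index dynamically: any $g_k\to\infty$ has a subsequence lying in a single coset $-\tau+(\phi_2-\phi_1)(G)$, and $\mu(A_0\cap T_{g_{k_j}}A_1)=\mu(A_0\cap T_{g_{k_j}+\tau}(T_{-\tau}A_1))\to\mu(A_0)\mu(A_1)$. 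You instead keep everything combinatorial: by pigeonholing the generators of an arbitrary $\Sigma_\ell$ set into cosets of $\phi(G)$, lifting along $\phi$ (your verification that the lifts tend to infinity is exactly right), and replacing $A_0$ by $T_{-c}A_0$ to absorb the accumulated coset offset $c$, you verify condition (iv) of \cref{3.MainResult} for the \emph{full} action and then invoke (iv)$\implies$(i) once. Note that your translation trick is the precise analogue of the paper's $T_{-\tau}A_1$ device, so the two proofs share their key manoeuvre; the difference is whether the finite-index step is performed on $\Sigma_\ell$ sets before applying \cref{3.MainResult} or on sequences after. The paper's route is slightly shorter, since the coset upgrading there is a two-line dynamical argument; yours isolates a reusable transfer principle (if $\phi(G)$ has finite index and $\phi^{-1}(D-c)$ is $\Sigma_\ell^*$ for the relevant translates, then $D$ meets every $\Sigma_\ell$ set) and handles the three cases $\phi_1$, $\phi_2$, $\phi_2-\phi_1$ uniformly, where the paper treats only one and declares the others similar.
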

\begin{proof}
We will assume that $(\phi_2-\phi_1)(G)$ has finite index in $G$, the other two cases can be handled similarly. For any $A_1,A_2\in\mathcal A$ and any $\epsilon>0$, we have
\begin{multline*}
R^{\phi_1,...,\phi_\ell}_\epsilon(X,A_1,A_2,\underbrace{X,...,X}_{\ell-2\text{ times}})\\
=\{g\in G\,|\,|\mu(X\cap T_{\phi_1(g)}A_1\cap T_{\phi_2(g)}A_2\cap T_{\phi_3(g)} X\cap\cdots\cap T_{\phi_\ell(g)}X)-\mu(A_1)\mu(A_2)|<\epsilon\}\\
=\{g\in G\,|\,|\mu(T_{\phi_1(g)}A_1\cap T_{\phi_2(g)}A_2)-\mu(A_1)\mu(A_2)|<\epsilon\}=R_\epsilon^{\phi_2-\phi_1}(A_1,A_2).
\end{multline*}
By our assumption, for any  $\epsilon>0$ and any $A_1,A_2\in\mathcal A$, the set $R^{\phi_2-\phi_1}_\epsilon(A_1,A_2)$ is a $\Sigma_\ell^*$ set and hence, by \cref{3.MainResult}, (iv), $(T_{(\phi_2-\phi_1)(g)})_{g\in G}$ is strongly mixing.\\

We will now prove that $(T_g)_{g\in G}$ is strongly mixing by showing that for any sequence 
$(g_k)_{k\in\N}$ in $G$ with $\lim_{k\rightarrow\infty}g_k=\infty$, there exists an increasing sequence $(k_j)_{j\in\N}$ in $\N$ with the property that for any $A_0,A_1\in\mathcal A$,
$$\lim_{j\rightarrow\infty}\mu(A_0\cap T_{g_{k_j}}A_1)=\mu(A_0)\mu(A_1).$$
Let $(g_k)_{k\in\N}$ be a sequence in $G$ with $\lim_{k\rightarrow \infty}g_k=\infty$. By assumption, $(\phi_2-\phi_1)(G)$ has finite index in $G$, so there  exists an increasing sequence $(k_j)_{j\in\N}$  in $\N$ and an element $\tau\in G$ for which $\{g_{k_j}+\tau\,|\,j\in\N\}\subseteq (\phi_2-\phi_1)(G)$. Since $(T_{(\phi_2-\phi_1)(g)})_{g\in G}$ is strongly mixing, for any $A_0,A_1\in\mathcal A$,
$$\lim_{j\rightarrow\infty}\mu(A_0\cap T_{g_{k_j}}A_1)=\lim_{j\rightarrow\infty}\mu(A_0\cap T_{g_{k_j}+\tau}(T_{-\tau}A_1))=\mu(A_0)\mu(A_1),$$
completing the proof.
\end{proof}

The following proposition shows that the assumption made in  \cref{4.FinitelyGeneratedEquivalence} that $G$ is finitely generated cannot be removed. 
\begin{prop}\label{4.NonFiniteIndexExample}
Let $G=\bigoplus_{k\in\N} \Z$ and let $\ell\in\N$. There exists  a measure preserving system $(X,\mathcal A,\mu,(T_g)_{g\in G})$ and homomorphisms $\phi_1,...,\phi_\ell:G\rightarrow G$ satisfyng (a) for any $j\in\{1,...,\ell\}$, $\ker(\phi_j)$ is finite, and (b) for any $i\neq j$, $\ker(\phi_j-\phi_i)$ is also finite, and such that every set of the form $R_\epsilon^{\phi_1,...,\phi_\ell} (A_0,...,A_\ell)$ is $\Sigma_\ell^*$ but $(T_g)_{g\in G}$ is not strongly mixing.
\end{prop}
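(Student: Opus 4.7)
The plan is to construct a $G$-action that fails to be strongly mixing but nevertheless behaves "as if" strongly mixing along the images of suitably chosen homomorphisms. First, decompose $G = \mathbb{Z} e_1 \oplus H$ with $H := \bigoplus_{k \geq 2} \mathbb{Z}$ of infinite index in $G$, and let $\pi \colon G \to H$ be the projection killing the $e_1$-component. Let $(Y, \mathcal B, \nu, (S_h)_{h \in H})$ be a strongly mixing $H$-action; the Bernoulli $H$-shift on $\{0,1\}^H$ with the uniform product measure will do. Take $(X, \mathcal A, \mu) := (Y, \mathcal B, \nu)$ and define the $G$-action by $T_g := S_{\pi(g)}$. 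Since $T_{k e_1} = \mathrm{Id}$ while $k e_1 \to \infty$ in $G$, the action $(T_g)_{g \in G}$ is not strongly mixing.

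For the homomorphisms, let $\sigma \colon G \to G$ be the shift $\sigma(e_k) := e_{k+1}$ and set $\phi_j := j \cdot \sigma$ for $j = 1, \ldots, \ell$. Each $\phi_j$, and each $\phi_j - \phi_i = (j-i)\sigma$ for $i \neq j$, is injective (so conditions (a) and (b) hold). Crucially, $\phi_j(G) \subseteq H$, hence $\pi \circ \phi_j = \phi_j$, which gives $T_{\phi_j(g)} = S_{\phi_j(g)}$ for every $g \in G$. This is the mechanism by which the strong mixing of $(S_h)_{h \in H}$ will be imported into the diagonal expressions defining $R_\epsilon^{\phi_1, \ldots, \phi_\ell}$.

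To show that $R_\epsilon^{\phi_1, \ldots, \phi_\ell}(A_0, \ldots, A_\ell)$ is $\Sigma_\ell^*$ in $G$, I would fix arbitrary sequences $(g_k^{(j)})_{k \in \mathbb{N}}$ in $G$ with $g_k^{(j)} \to \infty$ for each $j = 1, \ldots, \ell$, and form the tuples
\[
\mathbf{h}_k^{(j)} := \bigl(\phi_1(g_k^{(j)}), \ldots, \phi_\ell(g_k^{(j)})\bigr) \in H^\ell.
\]
Each $\mathbf{h}_k^{(j)}$ is non-degenerated (trivial kernel of each $\phi_t$), and its components grow apart in $H$ (trivial kernel of $\phi_t - \phi_{t'}$), so
\[
\bigl\{\mathbf{h}_{k_1}^{(1)} + \cdots + \mathbf{h}_{k_\ell}^{(\ell)} : k_1 < \cdots < k_\ell\bigr\}
\]
is a $\tilde\Sigma_\ell$ set in $H^\ell$. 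Applying \cref{3.MainResult}, (iii), to the strongly mixing $H$-action $(S_h)_{h \in H}$, the set $R_\epsilon^S(A_0, \ldots, A_\ell) \subseteq H^\ell$ is $\tilde\Sigma_\ell^*$, and so meets the displayed $\tilde\Sigma_\ell$ set at some point $(\phi_1(g), \ldots, \phi_\ell(g))$ with $g := g_{k_1}^{(1)} + \cdots + g_{k_\ell}^{(\ell)}$. Since $T_{\phi_t(g)} = S_{\phi_t(g)}$ for each $t$, this $g$ lies in $R_\epsilon^{\phi_1, \ldots, \phi_\ell}(A_0, \ldots, A_\ell)$; and because $g$ is in the arbitrary $\Sigma_\ell$ set generated by the $(g_k^{(j)})$'s, the $\Sigma_\ell^*$ property follows.

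The main conceptual obstacle is making three competing requirements coexist: $(T_g)_{g \in G}$ must fail strong mixing (guaranteed by the infinite kernel $\ker \pi = \mathbb{Z} e_1$); the $\phi_j$'s must have finite kernels and finite pairwise kernel-differences (delivered by the injective shift $\sigma$ together with distinct nonzero scalars); and yet the images $\phi_j(G)$ must entirely avoid the "bad" direction $\ker \pi$ (achieved because $\sigma$ pushes $e_1$ to $e_2 \in H$). The third requirement is precisely what breaks down over finitely generated abelian groups, where an injective endomorphism necessarily has image of finite index; this is why \cref{4.FinitelyGeneratedEquivalence} succeeds in the finitely generated case and why the present counterexample exploits an infinitely generated $G$.
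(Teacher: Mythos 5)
Your construction is correct and is essentially the paper's own argument: the paper likewise pulls back a Bernoulli shift through a coordinate projection with infinite kernel (there, $(a_1,a_2,\dots)\mapsto(a_2,a_4,\dots)$) and takes $\phi_j$ to be integer multiples of an injective homomorphism whose image avoids that kernel, so that the $\Sigma_\ell^*$ claim reduces to the strong mixing of the underlying shift while $T_{ke_1}=\mathrm{Id}$ kills strong mixing of $(T_g)_{g\in G}$. Your only departures are cosmetic: you use the coordinate shift $\sigma$ and the sub-action on $H$ in place of the interleaving map, and you re-derive the $\Sigma_\ell^*$ property directly from \cref{3.MainResult}, (iii), by pushing forward $\Sigma_\ell$ sets (the route of \cref{4.CombinatorialRemark}) rather than citing \cref{4.InjectiveDiagonalResult}.
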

\begin{proof}
We will only carry out the proof for $\ell=2$, the general case can be handled similarly. Let $\phi_1:G\rightarrow G$ be the homomorphism given by
$$\phi_1((a_1,a_2,...,a_n,...))=(0,a_1,0,a_2,...,0,a_n,...).$$
Note that $\phi_1$ is injective (and so, $\ker(\phi_1)$ is trivial).\\
Let $X=\{0,1\}^G$ be endowed with the product topology, let $\mu$ be the $(\frac{1}{2},\frac{1}{2})$ product measure on $\mathcal A=\text{Borel}(X)$ and for each $g\in G$, let $S_g:X\rightarrow X$  be the map defined by $(S_g(x))(h)=x(h+g)$. The system $(X,\mathcal A,\mu,(S_g)_{g\in G})$ is strongly mixing. Define a measure preserving $G$-action  $(T_g)_{g\in G}$ on $(X,\mathcal A,\mu)$ by 
$$T_{(a_1,a_2,...)}=S_{(a_2,a_4,...)}$$
and let $\phi_2:G\rightarrow G$ be defined by $\phi_2(g)=2\phi_1(g)$. Note that for any $g=(a_1,a_2,...)\in G$,
$$T_{\phi_1(g)}=T_{\phi_1((a_1,a_2,...))}=T_{(0,a_1,0,a_2,...)}=S_{(a_1,a_2,...)}=S_g.$$
So, for any $\epsilon>0$ and any $A_0,A_1,A_2\in\mathcal A$,
\begin{multline}\label{4.SetWithTSetWithS}
R_\epsilon^{\phi_1,\phi_2}(A_0,A_1,A_2)\\
=\{g\in G\,|\,|\mu(A_0\cap T_{\phi_1(g)}A_1\cap T_{\phi_2(g)}A_2)-\mu(A_0)\mu(A_1)\mu(A_2)|<\epsilon\}\\
=\{g\in G\,|\,|\mu(A_0\cap S_{g}A_1\cap S_{2g}A_2)-\mu(A_0)\mu(A_1)\mu(A_2)|<\epsilon\}.
\end{multline}
It follows from \cref{4.InjectiveDiagonalResult} that every set of the form 
$$\{g\in G\,|\,|\mu(A_0\cap S_{g}A_1\cap S_{2g}A_2)-\mu(A_0)\mu(A_1)\mu(A_2)|<\epsilon\}$$
is $\Sigma_2^*$ and hence, by \eqref{4.SetWithTSetWithS}, for any any $A_0,A_1,A_2$ and any  $\epsilon>0$,  
$R_\epsilon^{\phi_1,\phi_2}(A_0,A_1,A_2)$ is  $\Sigma_2^*$.\\
Noting that for each $k\in\N$, $T_{(k,0,0,...)}=S_{(0,0,...)}$ is the identity map on $X$, we see that $(T_g)_{g\in G}$ is not strongly mixing. We are done.
\end{proof}
The next result shows that \cref{4.FinitelyGeneratedEquivalenceIndex} cannot be extended to arbitrary countable abelian groups.
\begin{prop}\label{4.LackOfInjectivityExample}
Let $G=\bigoplus_{k\in\N} \Z$ and let $\ell\in\N$. There exist a strongly mixing system $(X,\mathcal A,\mu, (T_g)_{g\in G})$ and homomorphisms $\phi_1,...,\phi_\ell:G\rightarrow G$ satisfying (a) for any $j\in\{1,...,\ell\}$, $\phi_j(G)=G$, and (b) for any $i\neq j$, $(\phi_i-\phi_j)(G)=G$, and
such that for some $A\in\mathcal A$ and some $\epsilon>0$, the set $R_\epsilon^{\phi_1,...,\phi_\ell}(A,...,A)$ is not $\Sigma_\ell^*$.
\end{prop}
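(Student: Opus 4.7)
The plan is to build a Bernoulli $G$-action together with homomorphisms $\phi_1,\ldots,\phi_\ell$ that are surjective with surjective pairwise differences but whose common kernel $\bigcap_j \ker(\phi_j)$ has infinite rank. Once such a common kernel is available, an infinite-rank subgroup of it contains a $\Sigma_\ell$ set on which every $\phi_j$ vanishes, so the $(\ell+1)$-fold correlation collapses to $\mu(A)$ rather than $\mu(A)^{\ell+1}$. This single phenomenon produces a $\Sigma_\ell$ set disjoint from some $R_\epsilon^{\phi_1,\ldots,\phi_\ell}(A,\ldots,A)$ and hence defeats the $\Sigma_\ell^*$ property.

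For the algebraic construction, write $G = \bigoplus_{m \in \N} \Z e_m$, partition $\N$ into $\ell+1$ infinite blocks $A_0 \sqcup A_1 \sqcup \cdots \sqcup A_\ell$, and fix bijections $\tau_j : A_j \to \N$ for each $j \in \{1,\ldots,\ell\}$. Define $\phi_j(e_m) = e_{\tau_j(m)}$ if $m \in A_j$ and $\phi_j(e_m) = 0$ otherwise. Each $\phi_j$ is surjective because $\tau_j$ is a bijection, and for $i \neq j$ the image of $\phi_i - \phi_j$ already contains $\{e_{\tau_i(m)} : m \in A_i\}$, which generates $G$, so $(\phi_i - \phi_j)(G) = G$. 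Meanwhile $\ker(\phi_j) = \bigoplus_{m \notin A_j} \Z e_m$, so
\begin{equation*}
\bigcap_{j=1}^\ell \ker(\phi_j) \;\supseteq\; \bigoplus_{m \in A_0} \Z e_m,
\end{equation*}
an infinite-rank subgroup of $G$.

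For the dynamics, take the Bernoulli shift $(X, \mathcal A, \mu, (T_g)_{g \in G})$ with $X = \{0,1\}^G$, $\mu$ the symmetric product measure, and $T_g x(h) = x(h+g)$; this is strongly mixing. Set $A = \{x \in X : x(0) = 0\}$, so $\mu(A) = \tfrac12$. Enumerate $A_0 = \{m_{j,k} : 1 \leq j \leq \ell,\, k \in \N\}$ with pairwise distinct indices and put $g_k^{(j)} = e_{m_{j,k}}$; each sequence $(g_k^{(j)})_{k \in \N}$ consists of distinct basis vectors and hence tends to infinity in $G$. The associated $\Sigma_\ell$ set
\begin{equation*}
E = \{g_{k_1}^{(1)} + \cdots + g_{k_\ell}^{(\ell)} : k_1 < \cdots < k_\ell\}
\end{equation*}
lies in $\bigoplus_{m \in A_0}\Z e_m$, so $\phi_j(g) = 0$ for every $g \in E$ and every $j$. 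Consequently $\mu(A \cap T_{\phi_1(g)} A \cap \cdots \cap T_{\phi_\ell(g)} A) = \mu(A) = \tfrac12$, which differs from $\mu(A)^{\ell+1} = 2^{-(\ell+1)}$ by $\tfrac12(1 - 2^{-\ell}) > 0$; taking $\epsilon$ to be any positive number smaller than this gap makes $E$ disjoint from $R_\epsilon^{\phi_1,\ldots,\phi_\ell}(A,\ldots,A)$, which is therefore not $\Sigma_\ell^*$.

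The only real obstacle is algebraic: reconciling surjectivity of each $\phi_j$ and of each difference $\phi_i - \phi_j$ with the infiniteness of $\bigcap_j \ker(\phi_j)$. The partition-and-interleave trick resolves this tension — the block $A_0$ supplies the common kernel, while the disjoint blocks $A_1,\ldots,A_\ell$ each supply an independent surjective channel for $\phi_j$ and automatically for every $\phi_i - \phi_j$. The dynamical step afterwards is trivial, since killing every shift collapses the multicorrelation to $\mu(A)$.
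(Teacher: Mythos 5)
Your proposal is correct and follows essentially the same strategy as the paper: build surjective homomorphisms with surjective pairwise differences whose common kernel contains an infinite subgroup, place a $\Sigma_\ell$ set inside that kernel, and observe that the multicorrelation collapses to $\mu(A)\neq\mu(A)^{\ell+1}$. The paper realizes the common kernel via coordinate selections along powers of distinct primes rather than your partition-and-bijection construction, but this is only a cosmetic difference.
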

\begin{proof}
Let $(X,\mathcal A,\mu, (T_g)_{g\in G})$ be a strongly mixing system and let $p_1,...,p_\ell\in\N$ be  $\ell$ different prime numbers. For each $j\in \{1,...,\ell\}$, let $\phi_j:G\rightarrow G$ be defined by
$$\phi_j(a_1,a_2,a_3,...)=(a_{p_j^1},a_{p_j^2},a_{p_j^3}...).$$
It follows that for any $j\in\{1,...,\ell\}$, $\phi_j(G)=G$ and since for any distinct $i,j\in\{1,...,\ell\}$ the sets $\{p_i^k\,|\,k\in\N\}$ and $\{p_j^k\,|\,k\in\N\}$ are disjoint, we have that $(\phi_j-\phi_i)(G)=G$ as well.\\
Observe that  the subgroup $G'=\{(a_1,0,0,...)\in G\,|\,a_1\in\Z\}$ is isomorphic to $\Z$ and that for any $j\in\{1,...,\ell\}$, $G'\subseteq\ker(\phi_j)$. Let $(g_k)_{k\in\N}$ be a sequence in $G'$ with $\lim_{k\rightarrow\infty}g_k=\infty$. Since for each $k\in\N$, $T_{\phi_j(g_k)}=T_{(0,0,...)}=\text{Id}$, where $\text{Id}$ is the identity map on $X$, we  have that for any $A\in \mathcal A$ with $\mu(A)\in(0,1)$, and any $k_1<\cdots<k_\ell$, 
$$\mu(A\cap T_{\phi_1(g_{k_1}+\cdots+g_{k_\ell})}A\cap\cdots\cap T_{\phi_\ell(g_{k_1}+\cdots+g_{k_\ell})}A)=\mu(A)\neq \mu^{\ell+1}(A).$$
It  follows that if $\epsilon$ is small enough,  the set $R_\epsilon^{\phi_1,...,\phi_\ell}(A,...,A)$ does not intersect the $\Sigma_\ell$ set 
$$\{g_{k_1}+\cdots+g_{k_\ell}\,|\,k_1<\cdots<k_\ell\}$$
and hence, it is not $\Sigma_\ell^*$. This completes the proof.
\end{proof}
\section{Largeness properties of  $\tilde\Sigma_m^*$ sets}
As we have seen above, any strongly mixing system $(X,\mathcal A,\mu,(T_g)_{g\in G})$ has the property that the sets $R_\epsilon(A_0,...,A_m)$ are $\tilde\Sigma_m^*$  (moreover, the strong mixing of $(T_g)_{g\in G}$ is characterized by this property). This section is devoted to the discussion of massivity and ubiquity of $\tilde\Sigma_m^*$ sets. 
Since strong mixing is a stronger property than those of mild  and weak mixing, one should expect that the notions of largeness associated with (multiple) mild and weak mixing are "majorized" by the notion of largeness associated with $\tilde\Sigma_m^*$ sets.
This will be established in Subsections 5.1 and 5.2. Finally, in Subsection 5.3 we will show that $\tilde\Sigma_m^*$ sets are ubiquitous in the sense that they are well spread among the cosets of \textit{admissible} subgroups of $G^m$ (the class of admissible subgroups will be introduced in Subsection 5.3). 
\subsection{Any $\tilde\Sigma_m^*$ set in $G^d$  is an \text{\rm{$\tilde{\text{IP}}\rm{^*}$}} set} 
In this section we will introduce \text{\rm{$\tilde{\text{IP}}\rm{^*}$}} sets and  juxtapose them with $\tilde\Sigma_m^*$ sets.
 (\text{\rm{$\tilde{\text{IP}}\rm{^*}$}} sets are intrinsically linked to the multiple mixing properties of mildly mixing systems. The connection between  \text{\rm{$\tilde{\text{IP}}\rm{^*}$}} sets and mildly mixing systems will be addressed in Section 6.)\\

Let $(G,+)$ be a countable abelian group and let $\mathcal F$ denote the set of  all  non-empty finite subsets of $\N$. Given a sequence $(g_k)_{k\in\N}$ in $G$, define an  $\mathcal F$-sequence $(g_\alpha)_{\alpha\in\mathcal F}$ by  
\begin{equation}\label{5.1.DefnAlphaIP}
g_\alpha=\sum_{j\in\alpha} g_j=g_{k_1}+\cdots+g_{k_t},\,\alpha=\{k_1,...,k_t\}.
\end{equation}
We will write
$$\lim_{\alpha\rightarrow\infty}g_\alpha=\infty$$
if for every finite $K\subseteq G$, there exists an $\alpha_0\in\mathcal F$ such that for any   $\alpha\in\mathcal F$ with $\alpha>\alpha_0$ (i.e. $\min \alpha>\max \alpha_0$), $g_\alpha\not\in K$.\\
A set $E\subseteq G$ is called an IP set if $E=\{g_\alpha\,|\,\alpha\in\mathcal F\}$ for some  sequence $(g_k)_{k\in\N}$ in $G$ such that  $\lim_{\alpha\rightarrow\infty}g_\alpha=\infty$.\footnote{
 IP sets are often defined  just as sets of the form 
$$\text{FS}((g_k)_{k\in\N})=\{g_{k_1}+\cdots+g_{k_t}\,|\,k_1<\cdots<k_t,\,t\in\N\}=\{g_\alpha\,|\,\alpha\in\mathcal F\}$$
(without the requirement that $\lim_{\alpha\rightarrow\infty}g_\alpha=\infty$). Our choice of definition for IP sets is dictated by our interest in the study of asymptotic properties of measure preserving actions. The distinction between our definition and the more traditional one is rather mild: for any infinite set of the form $E=\{g_\alpha\,|\,\alpha\in\mathcal F\}$ there exists a sequence $(h_k)_{k\in\N}$ such that $\{h_\alpha\,|\,\alpha\in\mathcal F\}\subseteq E$ and  $\lim_{\alpha\rightarrow\infty}h_\alpha=\infty$.  
} 
A set $E\subseteq G$ is called IP$^*$ if it has  a non-trivial intersection with every IP set. \\ 

We now introduce modifications of IP and IP$^*$ sets, namely $\tilde{\text{IP}}$ sets and \text{\rm{$\tilde{\text{IP}}\rm{^*}$}}  sets, which, as will be seen in Section 6, are naturally linked with  the properties of the sets  $R_\epsilon(A_0,...,A_\ell\}$ in the context of mildly mixing systems.
\begin{defn}\label{5.1.DefnOfIPtilde}
Let $(G,+)$ be a countable abelian group and let $d\in \N$. We say that  a set $E\subseteq G^d$ is an $\tilde{\text{IP}}$ set if it is of the form 
$$E=\{(g_\alpha^{(1)},...,g_\alpha^{(d)})\,|\,\alpha\in\mathcal F\},$$
where for each $j\in\{1,...,d\}$, $\{g_\alpha^{(j)}\,|\,\alpha\in\mathcal F\}$ is generated by $(g_{k}^{(j)})_{k\in\N}$ as in \eqref{5.1.DefnAlphaIP} and, in addition, for any $j\in\{1,...,d\}$,
\begin{equation}\label{5.1.IPcondition1}
\lim_{\alpha\rightarrow\infty}g^{(j)}_\alpha=\infty
\end{equation}
and for any $i\neq j$,
\begin{equation}\label{5.1.IPcondition2}
\lim_{\alpha\rightarrow\infty}(g^{(j)}_\alpha-g_\alpha^{(i)})=\infty.
\end{equation}
(Note that if $d=1$, then $E\subseteq G$ is an \text{IP} set if and only if it is an \text{\rm{$\tilde{\text{IP}}$}} set.)\\ 
A set $E\subseteq G^d$ is called an \text{\rm{$\tilde{\text{IP}}\rm{^*}$}} set if it has a non-trivial intersection with every $\tilde{\text{IP}}$ set in $G^d$. 
\end{defn}
\begin{rem}\label{5.1.CommonSenseSequence}
Let $(G,+)$ be a countable abelian group, let $d\in \N$ and let $E\subseteq G^d$ be an $\tilde{\text{IP}}$ set. From now on, whenever we pick a sequence $(\textbf g_k)_{k\in\N}=(g_k^{(1)},...,g_k^{(d)})_{k\in\N}$ in $G^d$ with the property that $E=\{(g_\alpha^{(1)},...,g_\alpha^{(d)})\,|\,\alpha\in\mathcal F\}$, we will tacitly assume that $(g_k^{(1)})_{k\in\N}$,...,$(g_k^{(d)})_{k\in\N}$ satisfy \eqref{5.1.IPcondition1} and \eqref{5.1.IPcondition2}.
\end{rem}

The following lemma unveils an important connection between {\rm{$\tilde{\text{IP}}$}} and $\tilde\Sigma_m$ sets.
\begin{lem}\label{5.1.SigmaInEveryIP}
Let $(G,+)$ be a countable abelian group and let $d,m\in\N$. Any {\rm{$\tilde{\text{IP}}$}} set $E\subseteq G^d$ contains a $\tilde\Sigma_m$ set. Namely, there exist non-degenerated and essentially distinct sequences  
$$(\textbf g_k^{(j)})=(g^{(j)}_{k,1},...,g^{(j)}_{k,m})_{k\in\N},\,j\in\{1,...,d\}$$
in $G^m$ with the property that $\{(g_\alpha^{(1)},...,g_\alpha^{(d)})\,|\,\alpha\in\N^{(m)}\}\subseteq E$, where for each $j\in\{1,...,d\}$ and each $\alpha=\{k_1,...,k_m\}\in\N^{(m)}$, $g_\alpha^{(j)}=g_{k_1,1}^{(j)}+\cdots+g_{k_m,m}^{(j)}$.

\end{lem}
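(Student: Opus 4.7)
The plan is to exhibit a $\tilde\Sigma_m$ set inside $E$ by choosing singleton partial IP-sums arranged in blocks of size $m$. Write $E = \{(g_\alpha^{(1)}, \ldots, g_\alpha^{(d)}) : \alpha \in \F\}$, where the generating sequences $(g_k^{(j)})_{k \in \N}$, $j = 1, \ldots, d$, satisfy \eqref{5.1.IPcondition1} and \eqref{5.1.IPcondition2} (cf.\ \cref{5.1.CommonSenseSequence}). I will set, for $k \in \N$, $t \in \{1, \ldots, m\}$ and $j \in \{1, \ldots, d\}$,
$$g_{k,t}^{(j)} := g_{m(k-1)+t}^{(j)}, \qquad \mathbf{g}_k^{(j)} := (g_{k,1}^{(j)}, \ldots, g_{k,m}^{(j)}) \in G^m,$$
and argue that these sequences are the desired witnesses.

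The first step is to check the inclusion $\{(g_\alpha^{(1)}, \ldots, g_\alpha^{(d)}) : \alpha \in \N^{(m)}\} \subseteq E$. Given $\alpha = \{k_1 < \cdots < k_m\} \in \N^{(m)}$, the set
$$\beta_\alpha := \{m(k_1-1)+1,\, m(k_2-1)+2,\, \ldots,\, m(k_m-1)+m\}$$
consists of $m$ distinct natural numbers and thus lies in $\F$ (here we use that $k_1 < \cdots < k_m$ forces $m(k_i-1)+i < m(k_{i+1}-1)+(i+1)$). A direct rearrangement of the partial IP-sums then yields $g_{k_1,1}^{(j)} + \cdots + g_{k_m,m}^{(j)} = g_{\beta_\alpha}^{(j)}$, placing $(g_\alpha^{(1)}, \ldots, g_\alpha^{(d)})$ in $E$.

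The second step is to verify that the sequences $(\mathbf{g}_k^{(j)})_{k\in\N}$, $j = 1, \ldots, d$, are non-degenerated and essentially distinct in $G^m$; concretely, one must show $g_{m(k-1)+t}^{(j)} \to \infty$ as $k \to \infty$ for each $t, j$, and $g_{m(k-1)+t}^{(j)} - g_{m(k-1)+t}^{(j')} \to \infty$ as $k \to \infty$ for each $t$ and each $j \neq j'$. Both follow immediately by specializing \eqref{5.1.IPcondition1} and \eqref{5.1.IPcondition2} to the singleton IP-indices $\alpha = \{m(k-1)+t\} \in \F$, which tend to $\infty$ in $\F$ as $k \to \infty$.

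The only step requiring some care is the index bookkeeping that identifies the resulting set as a $\tilde\Sigma_m$ set in $G^d$ in the sense of \cref{1.SigmaTildeDefn}: setting $\tilde{\mathbf{g}}_k^{(t)} := (g_{k,t}^{(1)}, \ldots, g_{k,t}^{(d)}) \in G^d$ for $t = 1, \ldots, m$, one sees $\tilde{\mathbf{g}}_{k_1}^{(1)} + \cdots + \tilde{\mathbf{g}}_{k_m}^{(m)} = (g_\alpha^{(1)}, \ldots, g_\alpha^{(d)})$, and the non-degeneracy and essential distinctness of the $\mathbf{g}_k^{(j)}$'s translate respectively to non-degeneracy of each $\tilde{\mathbf{g}}_k^{(t)}$ and to the growing-apart of the coordinates of each $\tilde{\mathbf{g}}_k^{(t)}$. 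Beyond this translation, no substantive difficulty arises; notably, no growing-apart hypothesis within a single $\mathbf{g}_k^{(j)}$ across different $t$'s is needed.
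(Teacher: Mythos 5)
Your proof is correct and follows essentially the same route as the paper's: both reduce each $\tilde\Sigma_m$-sum to a single IP-sum $g_{\beta}^{(j)}$ over an $m$-element index set $\beta$, and both obtain non-degeneracy and essential distinctness by specializing \eqref{5.1.IPcondition1} and \eqref{5.1.IPcondition2} (in particular to singletons). The only, cosmetic, difference is the choice of witnesses: the paper takes the constant blocks $\textbf g_k^{(j)}=(h_k^{(j)},\ldots,h_k^{(j)})$, so the resulting sums range over all $m$-element subsets of the IP index set, whereas you distribute the IP generators along the residue classes mod $m$.
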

\begin{proof}
Let  $E$ be an $\tilde{\text{IP}}$ set and let $(\textbf h_k)_{k\in\N}= (h_k^{(1)},...,h_k^{(d)})_{k\in\N}$ be such that 
$$E=\{\textbf h_\alpha\,|\,\alpha\in\mathcal F\}=\{(h_\alpha^{(1)},...,h_\alpha^{(d)})\,|\,\alpha\in\mathcal F\}.$$
Following the stipulation made in  \Cref{5.1.CommonSenseSequence}, for any finite set $F\subseteq G$, we can find an $\alpha_F\in\mathcal F$ such that for any $\alpha\in\mathcal F$ with $\alpha> \alpha_F$ and any distinct $i,j\in\{1,...d\}$, $h^{(j)}_{\alpha}\not\in F$ and $(h^{(j)}_{\alpha}-h^{(i)}_{\alpha})\not\in F$. In particular,  for any distinct $i,j\in\{1,...,d\}$
\begin{equation}\label{5.1.AlphaSequenceGoingToInfty} 
\lim_{k\rightarrow\infty}h_{k}^{(j)}=\infty\text{ and }\lim_{k\rightarrow\infty}(h_{k}^{(j)}-h_{k}^{(i)})=\infty. 
\end{equation}
For each $j\in\{1,...,d\}$ and  each $k\in\N$ we let 
\begin{equation}\label{5.1.DefnSequenceg_k}
\textbf g_k^{(j)}=\underbrace{(h_{k}^{(j)},...,h_{k}^{(j)})}_{m\text{ times}}.
\end{equation}

Note that by \eqref{5.1.AlphaSequenceGoingToInfty}, the sequences $(\textbf g^{(1)}_k)_{k\in\N}$,...,$(\textbf g^{(d)}_k)_{k\in\N}$ are non-degenerated and essentially distinct. It follows now from  \eqref{5.1.DefnSequenceg_k} that for any $\alpha=\{k_1,...,k_m\}\in\N^{(m)}$,
$$(g^{(1)}_\alpha,...,g^{(d)}_\alpha)=(\sum_{j=1}^m h_{k_j}^{(1)},...,\sum_{j=1}^m h_{k_j}^{(d)})=(h_{\{k_1,...,k_m\}}^{(1)},...,h_{\{k_1,...,k_m\}}^{(d)})\in E,$$
which completes the proof.
\end{proof}
\begin{rem}
The proof of \cref{5.1.SigmaInEveryIP} actually shows that any  {\rm{$\tilde{\text{IP}}$}} set is a union of $\tilde\Sigma_t$ sets. Let  $E\subseteq G^d$ be an {\rm{$\tilde{\text{IP}}$}} set and let $(\textbf g_k)_{k\in\N}$ be a sequence such that $E=\{\textbf g_\alpha\,|\alpha\in\mathcal F\}$. The proof of \cref{5.1.SigmaInEveryIP} shows that for each $t\in\N$, $\{\textbf g_{k_1}+\cdots+\textbf g_{k_t}\,|\,k_1<\cdots<k_t\}$  is a $\tilde\Sigma_t$ set. Hence,
$$E=\bigcup_{t\in\N}\{\textbf g_{k_1}+\cdots+\textbf g_{k_t}\,|\,k_1<\cdots<k_t\}.$$
\end{rem}
As an immediate consequence of \cref{5.1.SigmaInEveryIP} we have the following result.
\begin{cor}\label{5.1.EverySigma*IsIP*}
Let $(G,+)$ be a countable abelian group and let $d,m\in\N$. Every $\tilde \Sigma_m^*$ set in $G^d$ is an \text{\rm{$\tilde{\text{IP}}\rm{^*}$}} set.
\end{cor}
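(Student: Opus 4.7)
The plan is to deduce \cref{5.1.EverySigma*IsIP*} directly from \cref{5.1.SigmaInEveryIP}, which is the substantive result; the corollary is essentially a formal consequence of the containment relation established there.

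Concretely, I would take an arbitrary $\tilde\Sigma_m^*$ set $E\subseteq G^d$ and aim to verify the defining property of $\tilde{\text{IP}}^*$, namely that $E$ meets every $\tilde{\text{IP}}$ set non-trivially. So I would fix an arbitrary $\tilde{\text{IP}}$ set $F\subseteq G^d$. By \cref{5.1.SigmaInEveryIP} applied to $F$ with the given $m$, there exist non-degenerated and essentially distinct sequences $(\textbf g_k^{(j)})_{k\in\N}$ in $G^m$ for $j\in\{1,\ldots,d\}$ such that the set
$$F' = \{(g^{(1)}_\alpha,\ldots,g^{(d)}_\alpha)\,|\,\alpha\in\N^{(m)}\}$$
is contained in $F$. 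By \cref{1.SigmaTildeDefn}, $F'$ is a $\tilde\Sigma_m$ set in $G^d$.

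Now I would invoke the definition of $\tilde\Sigma_m^*$: since $E$ is $\tilde\Sigma_m^*$ and $F'$ is a $\tilde\Sigma_m$ set, $E\cap F'\neq\emptyset$. Combining this with $F'\subseteq F$ gives $E\cap F\neq\emptyset$. As $F$ was an arbitrary $\tilde{\text{IP}}$ set, $E$ is $\tilde{\text{IP}}^*$, completing the proof.

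Since \cref{5.1.SigmaInEveryIP} has already been proven, there is no real obstacle here; the only thing to be slightly careful about is simply tracking that the $\tilde\Sigma_m$ set produced by the lemma is a subset of the given $\tilde{\text{IP}}$ set (which is what allows the intersection inherited from the $\tilde\Sigma_m^*$ property to pass upward). This step is immediate from the explicit formula for $F'$ in the conclusion of \cref{5.1.SigmaInEveryIP}, so the corollary really is a one-line deduction.
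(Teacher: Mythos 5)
Your proof is correct and is essentially identical to the paper's own argument: both apply \cref{5.1.SigmaInEveryIP} to the given $\tilde{\text{IP}}$ set to extract a $\tilde\Sigma_m$ subset, then invoke the defining intersection property of $\tilde\Sigma_m^*$ sets and pass the non-empty intersection upward. No issues.
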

\begin{proof}
Let $E\subseteq G^d$ be a $\tilde\Sigma_m^*$ set and let $D\subseteq G^d$ be an  \rm{$\tilde{\text{IP}}$} set. By \cref{5.1.SigmaInEveryIP}, we have that $D$ contains  a $\tilde\Sigma_m$ set and hence $E\cap D\neq \emptyset$. Since $D$ was arbitrary, this shows that $E$ is an \rm{$\tilde{\text{IP}}\rm{^*}$} set.
\end{proof}
\subsection{Any $\tilde\Sigma_m^*$ set in $G^d$  has uniform density one}
We start with defining the notions of \textit{upper density} and \textit{uniform density one} in countable abelian groups. 
\begin{defn}
Let $(G,+)$ be a countable abelian group, let $E\subseteq G$ and let $(F_k)_{k\in\N}$ be a F{\o}lner sequence in $G$.\footnote{
Let $(G,+)$ be a countable abelian group. A sequence $(F_k)_{k\in\N}$ of non-empty finite subsets of $G$  is a F{\o}lner sequence if for any $g\in G$,
$$\lim_{k\rightarrow\infty}\frac{|(g+F_k)\cap F_k|}{|F_k|}=1,$$
where, for a finite set $A$, $|A|$ denotes its cardinality. It is well known that every countable abelian group contains a F{\o}lner sequence. 
} 
The \textbf{upper density} of $E$ with respect to $(F_k)_{k\in\N}$ is defined by 
$$\overline d_{(F_k)}(E)=\limsup_{k\rightarrow\infty}\frac{|E\cap F_k|}{|F_k|}.$$
A set $E\subseteq G$ has \textbf{uniform density one} if for every F{\o}lner sequence $(F_k)_{k\in\N}$, $\overline d_{(F_k)}(E)=1$.
\end{defn}
Sets of uniform density one are intrinsically connected with weakly mixing measure preserving systems. Recall that a  measure preserving action $(T_g)_{g\in G}$ on a probability space $(X,\mathcal A,\mu)$ is called weakly mixing if the diagonal action $(T_g\times T_g)_{g\in G}$ on $X\times X$ is ergodic. When $G$ is an amenable group, the notion of weak mixing can be equivalently defined with the help of strong C{\'e}saro limits along F{\o}lner sequences. Namely, $(T_g)_{g\in G}$ is weakly mixing if and only if for any F{\o}lner sequence $(F_k)_{k\in\N}$ and any $A_0,A_1\in\mathcal A$,  
$$\lim_{k\rightarrow\infty}\frac{1}{|F_k|}\sum_{g\in F_k}|\mu(A_0\cap T_gA_1)-\mu(A_0)\mu(A_1)|=0.$$
It  follows that $(T_g)_{g\in G}$ is weakly mixing if and only if the sets
$$R_\epsilon(A_0,A_1)=\{g\in G\,|\,|\mu(A_0\cap T_gA_1)-\mu(A_0)\mu(A_1)|<\epsilon\}$$
have uniform density one. The reader will find a few more equivalent forms of weak mixing  in \cref{6.2.EquivalentFormsOFWM} below. \\

In order to derive the  main result of this subsection, namely the fact that every $\tilde\Sigma_m^*$ set has uniform density one, we need first to prove two auxiliary propositions. 
\begin{prop} \label{5.2.IPPoincare}
Let $(G,+)$ be a countable abelian group, let $d\in\N$ and let $(F_k)_{k\in\N}$ be a F{\o}lner sequence in $G^d$. For any $E\subseteq G^d$ with $\overline d_{(F_k)}(E)>0$ and any \text{\rm{$\tilde{\text{IP}}$}} set $D\subseteq G^d$, there exists a sequence $(\textbf g_k)_{k\in\N}=(g_k^{(1)},...,g_k^{(d)})$ in $G^d$ such that (a) $\{\textbf g_\alpha\,|\,\alpha\in\mathcal F\}\subseteq D$,  (b) for any distinct $i,j\in\{1,...,d\}$, \eqref{5.1.IPcondition1} and \eqref{5.1.IPcondition2} hold, and (c) for any $\alpha\in \mathcal F$,
\begin{equation}\label{5.2.IPPoincareMultirecurrence}
\overline d_{(F_k)}(\bigcap_{\beta\subseteq\alpha,\,\beta\neq\emptyset}(E-\textbf g_\beta))>0.
\end{equation}
In other words, for each $\alpha\in \mathcal F$, the set $E_\alpha=\{\textbf h\in G^d\,|\,\forall \beta\subseteq \alpha,\,\beta\neq\emptyset,\,\textbf h+\textbf g_\beta\in E\}$ satisfies $\overline d_{(F_k)}(E_\alpha)>0$.
\end{prop}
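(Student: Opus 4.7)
The plan is to take $\textbf g_k=\textbf h_{\alpha_k}$, where $(\textbf h_k)_{k\in\N}$ is a generating sequence of the $\tilde{\text{IP}}$ set $D$ (so that the coordinate sequences $(h_k^{(j)})_{k\in\N}$ satisfy \eqref{5.1.IPcondition1} and \eqref{5.1.IPcondition2}), and $(\alpha_k)_{k\in\N}$ is an increasing sequence of pairwise disjoint elements of $\mathcal F$ with $\max\alpha_k<\min\alpha_{k+1}$ for every $k$. With this choice, for any $\beta\in\mathcal F$ the disjointness of the $\alpha_k$'s gives $\textbf g_\beta=\textbf h_{\bigcup_{k\in\beta}\alpha_k}\in D$, yielding (a). Since $\min\bigl(\bigcup_{k\in\beta}\alpha_k\bigr)=\min\alpha_{\min\beta}\to\infty$ as $\min\beta\to\infty$, the growth conditions on $(\textbf h_\alpha)_{\alpha\in\mathcal F}$ pass to $(\textbf g_\beta)_{\beta\in\mathcal F}$, so (b) also comes for free; the whole work consists in choosing the $\alpha_k$'s carefully enough to secure (c).

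For (c), set $\tilde E_\alpha:=\bigcap_{\beta\subseteq\alpha}(E-\textbf g_\beta)$ with the convention $\textbf g_\emptyset=0$; since $\tilde E_\alpha$ is contained in the set on the left of \eqref{5.2.IPPoincareMultirecurrence}, it is enough to ensure $\overline d_{(F_k)}(\tilde E_\alpha)>0$ for every nonempty $\alpha\in\mathcal F$. I will pick $\alpha_1,\alpha_2,\ldots$ by induction so that after step $n$, $\overline d_{(F_k)}(\tilde E_\alpha)>0$ holds for every $\alpha\subseteq\{1,\ldots,n\}$, the case $\alpha=\emptyset$ being just the hypothesis $\overline d_{(F_k)}(E)>0$. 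The telescoping identity
\begin{equation*}
\tilde E_{\alpha\cup\{n+1\}}=\tilde E_\alpha\cap(\tilde E_\alpha-\textbf g_{n+1}),\qquad \alpha\subseteq\{1,\ldots,n\},
\end{equation*}
reduces the inductive step to finding $\textbf g_{n+1}$ of the form $\textbf h_{\alpha_{n+1}}$ with $\min\alpha_{n+1}>\max\alpha_n$ such that $\overline d_{(F_k)}(\tilde E_\alpha\cap(\tilde E_\alpha-\textbf g_{n+1}))>0$ simultaneously for every $\alpha\subseteq\{1,\ldots,n\}$.

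The crux is the following IP-Poincar\'e recurrence statement: for any $A\subseteq G^d$ with $\overline d_{(F_k)}(A)>0$, the set $B(A):=\{\textbf g\in G^d\,|\,\overline d_{(F_k)}(A\cap(A-\textbf g))>0\}$ is $\mathrm{IP}^*$ in $G^d$. This will follow from Furstenberg's correspondence principle---which produces a measure preserving $G^d$-system $(X,\mathcal B,\mu,(T_\textbf g)_{\textbf g\in G^d})$ and a set $\hat A\in\mathcal B$ with $\mu(\hat A)=\overline d_{(F_k)}(A)$ satisfying $\overline d_{(F_k)}(A\cap(A-\textbf g))\geq\mu(\hat A\cap T_\textbf g^{-1}\hat A)$---combined with the Khintchine-type IP recurrence theorem (see, e.g., \cite{FBook}), which asserts that $\{\textbf g\,|\,\mu(\hat A\cap T_\textbf g^{-1}\hat A)\geq\mu(\hat A)^2/2\}$ is $\mathrm{IP}^*$. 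Since the class of $\mathrm{IP}^*$ sets is closed under finite intersections (a direct consequence of Hindman's theorem, applied to the partition of an $\mathrm{IP}$ set by the characteristic functions of the finitely many $B(\tilde E_\alpha)^c$), the set $\bigcap_{\alpha\subseteq\{1,\ldots,n\}}B(\tilde E_\alpha)$ is $\mathrm{IP}^*$ in $G^d$ and therefore meets the tail $\mathrm{IP}$ set $\{\textbf h_\beta\,|\,\min\beta>\max\alpha_n\}\subseteq D$; any $\alpha_{n+1}$ making $\textbf g_{n+1}=\textbf h_{\alpha_{n+1}}$ land in this intersection closes the induction. I expect the main obstacle to be establishing the $\mathrm{IP}^*$-ness of $B(A)$ in the amenable (rather than $\Z$) setting, but this is by now a standard pillar of IP-ergodic theory.
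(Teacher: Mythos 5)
Your proposal is correct in outline and, modulo the standard external results you invoke, it does prove the proposition; but it reaches the key recurrence step by a genuinely different (and much heavier) route than the paper. The paper's proof is entirely elementary and self-contained: it works directly inside the given \text{\rm{$\tilde{\text{IP}}$}} set, choosing consecutive-block sums $\textbf h_{\{L+1,\dots,R\}}$ and observing that if all the sets $E-\textbf h_{\{1,\dots,R\}}$, $R\le M$, had pairwise intersections of zero upper density, then translation invariance of $\overline d_{(F_k)}$ would force $\overline d_{(F_k)}\bigl(\bigcup_{R=1}^M(E-\textbf h_{\{1,\dots,R\}})\bigr)=M\,\overline d_{(F_k)}(E)>1$; this pigeonhole step is then iterated on $E_n=E_{n-1}\cap(E_{n-1}-\textbf h_{\gamma_n})$, and the same telescoping identity you use yields (c) for all $\alpha$ at once (so your bookkeeping over all $2^n$ subsets $\alpha\subseteq\{1,\dots,n\}$, while not wrong, is unnecessary: positivity of the full intersection $\tilde E_{\{1,\dots,n\}}$ already implies it). You instead derive the inductive step from the statement that $B(A)=\{\textbf g\,|\,\overline d_{(F_k)}(A\cap(A-\textbf g))>0\}$ is IP$^*$, obtained via the Furstenberg correspondence principle for the countable amenable group $G^d$ together with the Furstenberg--Katznelson Khintchine-type IP recurrence theorem, and you then intersect finitely many IP$^*$ sets using Hindman's theorem before meeting the tail IP set $\{\textbf h_\beta\,|\,\min\beta>\max\alpha_n\}$. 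All of these ingredients are indeed standard in the abelian setting (your flagged worry is not a real obstacle), and your route buys a stronger intermediate fact (a Khintchine-type lower bound on the return set, not just positivity); what it costs is the appeal to the correspondence principle, the Hilbert-space IP-limit projection lemma and Hindman's theorem, whereas the paper keeps this lemma at the level of a short counting argument using only the F{\o}lner property and translation invariance of upper density. Also note that properties (a) and (b) pass to the subsystem $\textbf g_k=\textbf h_{\alpha_k}$ exactly as you say, which is the same observation made at the end of the paper's proof.
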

\begin{proof}
Let $D=\{\textbf h_\alpha\,|\,\alpha\in\mathcal F\}$ be an \text{\rm{$\tilde{\text{IP}}$}} set in $G^d$ generated by the sequence $(\textbf h_k)_{k\in\N}=(h_{k,1},...,h_{k,d})_{k\in\N}$. 
We claim that for any $M\in\N$ with  $M>\frac{1}{\overline d_{(F_k)}(E)}$, there exist $L,R\in\N$, $L<R\leq M$ for which 
$\overline d_{(F_k)}(E\cap (E-\textbf h_{\{L+1,L+2,...,R\}}))>0$. To see this, suppose for the sake of contradiction that for any distinct $R,L\in\{1,...,M\}$, $R>L$, $\overline d_{(F_k)}(E\cap(E-\textbf h_{\{L+1,...,R\}}))=0$. Since  $\overline d_{(F_k)}$  is translation invariant and for any $L,R\in\{1,...,M\}$, $L<R$, $\textbf h_{\{L+1,...,R\}}=\textbf h_{\{1,...,R\}}-\textbf h_{\{1,...,L\}}$, we have that 
$$\overline d_{(F_k)}(E\cap(E-\textbf h_{\{L+1,...,R\}}))=\overline d_{(F_k)}((E-\textbf h_{\{1,...,L\}})\cap(E-\textbf h_{\{1,...,R\}}))=0.$$ 
It follows that
$$\overline d_{(F_k)}(\bigcup_{R=1}^M (E-\textbf h_{\{1,...,R\}}))=\sum_{R=1}^M\overline d_{(F_k)}(E-\textbf h_{\{1,...,R\}})=M\overline d_{(F_k)}(E)>1,$$ 
a contradiction. Thus, there exist $L,R\in\N$ with $L<R\leq M$ such that $\overline d_{(F_k)}(E\cap (E-\textbf h_{\{L+1,...,R\}}))>0$. We will let $\gamma_1=\{L+1,...,,R\}$.\\
Now let $E_1=E\cap (E-\textbf h_{\gamma_1})$. Repeating the above argument, we  find
$L',R'\in\N$, $R<L'<R'$, such that $\gamma_2=\{L'+1,...,R'\}$ satisfies
 $\overline d_{(F_k)}(E_1\cap (E_1-\textbf h_{\gamma_2}))>0$. It follows that $\gamma_1<\gamma_2$ and  that $\textbf h_{\gamma_1\cup \gamma_2}=\textbf h_{\gamma_1}+\textbf h_{\gamma_2}$. Hence
$$\overline d_{(F_k)}(E\cap(E-\textbf h_{\gamma_1})\cap(E-\textbf h_{\gamma_2})\cap (E-\textbf h_{\gamma_1\cup \gamma_2})>0.$$
Continuing in this way, we can find a sequence $(\gamma_k)_{k\in\N}$ with $\gamma_k<\gamma_{k+1}$ for each $k\in\N$ and the property that for any $\alpha\in \mathcal F$, 
$$\overline d_{(F_k)}(\bigcap_{\beta\subseteq\alpha,\,\beta\neq\emptyset}(E-\textbf h_{\bigcup_{k\in\beta}\gamma_k}))>0.$$

For each $k\in\N$, let $\textbf g_k=\textbf h_{\gamma_k}$ and for each $\alpha\in\mathcal F$, let $\textbf g_\alpha=\sum_{j\in\alpha}\textbf g_j=\textbf h_{\bigcup_{j\in\alpha}\gamma_j}$. Observe that the sequence $(\textbf g_\alpha)_{\alpha\in\mathcal F}$ satisfies \eqref{5.2.IPPoincareMultirecurrence}. Let $D'=\{\textbf g_\alpha\,|\,\alpha\in\mathcal F\}$. Clearly   $D'\subseteq D$. To finish the proof observe that 
$$(\textbf g_\alpha)_{\alpha\in\mathcal F}=( g_{\alpha,1},...,g_{\alpha,d})_{\alpha\in\mathcal F}=(h_{(\bigcup_{k\in\alpha}\gamma_k),1},...,h_{(\bigcup_{k\in\alpha}\gamma_k),d})_{\alpha\in\mathcal F}$$
 satisfies \eqref{5.1.IPcondition1} and \eqref{5.1.IPcondition2}.  Indeed, in view of  \Cref{5.1.CommonSenseSequence}, for any  $j\in\{1,...,d\}$, 
$$\lim_{\alpha\rightarrow\infty} g_{\alpha,j}=\lim_{\alpha\rightarrow\infty}h_{(\bigcup_{k\in\alpha}\gamma_k),j}=\infty
$$
and for $i\neq j$,
$$\lim_{\alpha\rightarrow\infty}( g_{\alpha,j}- g_{\alpha,i})=\lim_{\alpha\rightarrow\infty}(h_{(\bigcup_{k\in\alpha}\gamma_k),j}-h_{(\bigcup_{k\in\alpha}\gamma_k),i})=\infty.$$
\end{proof}
\begin{prop}\label{5.2.FiniteSumsInPositiveDensitySets}
Let $(G,+)$ be a countable abelian group, let $d,m\in\N$ and let $(F_k)_{k\in\N}$ be a F{\o}lner sequence in $G^d$. Any $E\subseteq G^d$ with $\overline d_{(F_k)}(E)>0$ contains a $\tilde\Sigma_m$ set. Namely, there exist non-degenerated and essentially distinct sequences  
$$(\textbf g_k^{(j)})=(g^{(j)}_{k,1},...,g^{(j)}_{k,m})_{k\in\N},\,j\in\{1,...,d\}$$
in $G^m$ with the property that $\{(g_\alpha^{(1)},...,g_\alpha^{(d)})\,|\,\alpha\in\N^{(m)}\}\subseteq E$.
\end{prop}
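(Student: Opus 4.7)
The plan is to invoke \cref{5.2.IPPoincare} once and then construct the desired $\tilde{\Sigma}_m$ set directly from the resulting IP multi-recurrence.

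First, I would produce any $\tilde{\text{IP}}$ set $D\subseteq G^d$, say the one generated by a sequence $(\textbf{h}_k)$ in $G^d$ whose coordinates tend to infinity and whose pairwise coordinate differences also tend to infinity (such a sequence exists because $G$ is a countably infinite abelian group). Applying \cref{5.2.IPPoincare} to $E$ and $D$ yields a sequence $(\textbf{g}_k)_{k\in\N}$ in $G^d$ such that (a) $\{\textbf{g}_\alpha:\alpha\in\mathcal{F}\}\subseteq D$, (b) the coordinates of $\textbf{g}_k$ diverge and the pairwise coordinate differences diverge, and (c) for every finite $\alpha\subseteq\N$, the set $E_\alpha=\bigcap_{\emptyset\neq\beta\subseteq\alpha}(E-\textbf{g}_\beta)$ has positive upper density with respect to $(F_k)$.

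Next, I would set $\boldsymbol{\gamma}^{(t)}_k=\textbf{g}_k$ for every $t\in\{1,\ldots,m-1\}$ and every $k\in\N$. These $m-1$ sequences (which are actually the same sequence) trivially satisfy the non-degeneracy and coordinate-growing-apart conditions by (b), and the definition of a $\tilde{\Sigma}_m$ set imposes no cross-condition relating different $\boldsymbol{\gamma}^{(t)}$'s, so this is legitimate. For the $m$-th sequence I would proceed one element at a time: for each $k\in\N$, let $\Gamma_k=\{1,2,\ldots,k-1\}$ and select $\boldsymbol{\gamma}^{(m)}_k=\textbf{u}_k\in E_{\Gamma_k}$ in such a way that the coordinates of $\textbf{u}_k$ grow with $k$ and the pairwise coordinate differences of $\textbf{u}_k$ grow with $k$. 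Such a choice is possible because $E_{\Gamma_k}$ has positive upper density by (c), hence cannot be concentrated on any thin slice of the form $\{\textbf{x}\in G^d:x_j\in K\}$ or $\{\textbf{x}:x_j-x_{j'}\in K\}$ for a finite set $K\subseteq G$.

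Verification is then immediate. For $k_1<k_2<\cdots<k_m$, set $\beta=\{k_1,\ldots,k_{m-1}\}$; since $k_{m-1}<k_m$ we have $\beta\subseteq\Gamma_{k_m}$, and since the indices $k_1,\ldots,k_{m-1}$ are distinct, $\textbf{g}_{k_1}+\cdots+\textbf{g}_{k_{m-1}}=\textbf{g}_\beta$. Therefore $\boldsymbol{\gamma}^{(1)}_{k_1}+\cdots+\boldsymbol{\gamma}^{(m)}_{k_m}=\textbf{g}_\beta+\textbf{u}_{k_m}\in E$, because $\textbf{u}_{k_m}\in E_{\Gamma_{k_m}}\subseteq E-\textbf{g}_\beta$.

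The main obstacle is the one-element-at-a-time selection of $(\textbf{u}_k)$ with the required divergence properties. One has to argue that a positive-density subset of $G^d$ (relative to a F{\o}lner sequence) necessarily contains elements whose coordinates are outside any prescribed finite window and whose pairwise coordinate differences are outside any prescribed finite window; establishing this cleanly needs some mild care with the F{\o}lner sequence, but is otherwise routine given the translation-invariance of upper density.
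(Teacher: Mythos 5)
Your construction is correct in substance and runs on the same engine as the paper's proof, namely \cref{5.2.IPPoincare}, but it packages the remainder differently. The paper first treats $m=1$ by a purely combinatorial pigeonhole: it picks $\textbf b_k$ in the intersection of $kd^2+1$ translates $E-\textbf h_{\alpha_t}$ and, using only that the $h^{(j)}_{\alpha_t}$ and their pairwise differences are pairwise distinct, finds among the candidate points $\textbf b_k+\textbf h_{\alpha_t}$ (all of which already lie in $E$) one whose coordinates and coordinate differences avoid the prescribed finite set; it then gets $m>1$ by induction, appending one new column to a configuration inside $D$ supplied by \cref{5.1.SigmaInEveryIP}. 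You avoid both the induction and \cref{5.1.SigmaInEveryIP} by letting the first $m-1$ summand sequences all equal the IP generators $(\textbf g_k)$ (legitimate, since the definition imposes no conditions across different summand sequences, and the paper itself repeats columns in \cref{5.1.SigmaInEveryIP}), and you secure the divergence of the last column by a density fact rather than a pigeonhole: a positive-density set cannot be covered by finitely many slices $\{\textbf x\,:\,x_j\in K\}$, $\{\textbf x\,:\,x_j-x_{j'}\in K\}$. That fact is true and its proof is indeed the routine one you indicate: each slice is a finite union of cosets of $\ker\pi_j$ or $\ker(\pi_j-\pi_{j'})$, which have infinite index in $G^d$, and for any F{\o}lner sequence $(F_k)$ and any $N$ distinct cosets of such a subgroup $H$ one has $|F_k\cap(\textbf g_i+H)|=|F_k\cap H|+o(|F_k|)$, whence $\overline d_{(F_k)}(H)\le 1/N$ and every coset is $\overline d_{(F_k)}$-null. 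The trade-off is that your route is shorter and needs no induction, while the paper's pigeonhole uses nothing about density beyond the positivity furnished by \cref{5.2.IPPoincare}. Two small repairs: your verification needs $m\ge 2$, since for $m=1$ the set $\beta$ is empty and $E_{\Gamma_k}$ need not be contained in $E$; for $m=1$ just apply your selection principle directly to $E$. Also, for $D$ to be an \text{\rm{$\tilde{\text{IP}}$}} set the sums $\textbf h_\alpha$, not merely the generators, must satisfy the divergence conditions; this is again a routine inductive construction (and is tacitly assumed in the paper's proof as well).
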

\begin{proof}
Fix $d\in\N$ and let $D$ be an \text{\rm{$\tilde{\text{IP}}$}} set  in $G^d$. Let  $(\textbf h_k)_{k\in\N}=(h_k^{(1)},...,h_k^{(d)})_{k\in\N}$ be a sequence in $G^d$ with $D=\{\textbf h_\alpha\,|\,\alpha\in\mathcal F\}$. Invoking \cref{5.2.IPPoincare} and  passing, if needed, to  a sub-\text{\rm{$\tilde{\text{IP}}$}} set in $D$, we can assume that for any $\alpha\in\mathcal F$,
\begin{equation}\label{5.2.IPReccurence}
\overline d_{(F_k)}(\bigcap_{\beta\subseteq\alpha,\,\beta\neq\emptyset}(E-\textbf h_\beta))>0
\end{equation}
and that $(\textbf h_k)_{k\in\N}$ satisfies \eqref{5.1.IPcondition1} and \eqref{5.1.IPcondition2}.\\
Let $m=1$. There exists a sequence $(\alpha_k)_{k\in\N}$ in $\mathcal F$ such that for each $k\in\N$, $ \alpha_k<\alpha_{k+1}$  and such that  for any distinct $k,k'\in\N$ and any distinct $i,j\in\{1,...,d\}$, 
\begin{equation}\label{5.2.Disjoint}
h_{\alpha_k}^{(j)}\neq h_{\alpha_{k'}}^{(j)}\text{ and }h_{\alpha_k}^{(j)}-h_{\alpha_k}^{(i)}\neq h_{\alpha_{k'}}^{(j)}-h_{\alpha_{k'}}^{(i)}.
\end{equation}
Pick a sequence $(A_k)_{k\in\N}$ of finite subsets of $G$ with the properties that for each $k\in\N$, (a) $|A_k|=k$, (b) $A_k\subseteq A_{k+1}$,  and (c)  $\bigcup_{k\in\N} A_k=G$.  By \eqref{5.2.IPReccurence}, for each $k\in\N$ we can find  $\textbf b_k=(b_{k,1},...,b_{k,d})$ in $G^d$ such that for any $t\in\{1,...,kd^2+1\}$, $\textbf b_k+\textbf h_{\alpha_t}\in E$. 
By \eqref{5.2.Disjoint}, for any $k\in\N$ and any $j\in\{1,...,d\}$, there exist at most $k$ natural numbers $t$ for which $b_{k,j}+h_{\alpha_t}^{(j)}\in A_k$. Similarly, for any distinct $i,j\in\{1,...,d\}$, one has $(b_{k,j}-b_{k,i})+(h_{\alpha_t}^{(j)}-h_{\alpha_t}^{(i)})\in A_k$ for at most $k$ natural numbers $t$.\\
We claim that there exists $t\in\{1,...,kd^2+1\}$ such that for any $j\in\{1,...,d\}$, $b_{k,j}+h_{\alpha_t}^{(j)}\not\in A_k$ and  for any $i\neq j$, $(b_{k,j}-b_{k,i})+(h_{\alpha_t}^{(j)}-h_{\alpha_t}^{(i)})\not\in A_k$. Suppose for contradiction that this is not the case. Since there are $d^2-d$ pairs $(i,j)$ with distinct $i, j\in\{1,...,d\}$, there exist at least $k+1$ natural numbers $t$ for which, say,  $b_{k,1}+h_{\alpha_t}^{(1)}\in A_k$, a contradiction.\\
Thus, there exists a sequence $(k_t)_{t\in\N}$ in $\N$ for which the sequences $$(b_{t,j}+h_{\alpha_{k_t}}^{(j)})_{t\in\N},\,j\in\{1,...,d\}$$
are non-degenerated and essentially distinct, 
 and 
$$\{(b_{t,1}+h_{\alpha_{k_t}}^{(1)},...,b_{t,d}+h_{\alpha_{k_t}}^{(d)})\,|\,t\in\N\}\subseteq E.$$\\
Now let $m>1$. By \cref{5.1.SigmaInEveryIP} there exist non-degenerated and essentially distinct sequences $(\textbf f_k^{(j)})_{k\in\N}=(f^{(j)}_{k,1},...,f^{(j)}_{k,m-1})_{k\in\N}$, $j\in\{1,...,d\}$, with the property that $\{(f^{(1)}_{\alpha},...,f^{(d)}_\alpha)\,|\,\alpha\in\N^{(m-1)}\}\subseteq D$. For each $k\in\N$, let 
\begin{equation}\label{5.2.E_kDefn}
E_k=\bigcap_{\alpha\subseteq\{1,...,k+m-1\},\,|\alpha|=m-1}(E-(f^{(1)}_\alpha,...,f^{(d)}_\alpha)).
\end{equation}
By \eqref{5.2.IPReccurence}, for each $k\in\N$, $\overline d_{(F_k)}(E_k)>0$. It follows from the case $m=1$, that  there exist sequences 
$$(g_{k,j})_{k\in\N},\,j\in\{1,...,d\}$$
with the properties that (a) for any $k\in\N$, $(g_{k,1},...,g_{k,d})\in E_k$, (b) for any $j\in\{1,...,d\}$, $\lim_{k\rightarrow\infty}g_{k,j}=\infty$  and  (c) for any distinct $i,j\in\{1,...,d\}$, $\lim_{k\rightarrow\infty}g_{k,i}-g_{k,j}=\infty$. For each $j\in\{1,...,d\}$ form the sequence
$$(\textbf g_k^{(j)})_{k\in\N}=(f_{k,1}^{(j)},...,f_{k,m-1}^{(j)},g_{k,j})=(g_{k,1}^{(j)},...,g_{k,m}^{(j)}).$$
By \eqref{5.2.E_kDefn} and (a), we have that for any $k\in\N$ and any $\alpha\subseteq \{1,...,k-1\}$ with $|\alpha|=m-1$, $(g_{k,1},...,g_{k,d})+(f^{(1)}_\alpha,...,f^{(d)}_\alpha)\in E$ and hence 
$$\{(f_{\{k_1,...,k_{m-1}\}}^{(1)}+g_{k_m,1},...,f_{\{k_1,...,k_{m-1}\}}^{(d)}+g_{k_m,d})\,|\,k_1<\cdots<k_{m-1}<k_m\}\subseteq E.$$
By (b) and 
(c), the sequences $(\textbf g_k^{(1)})_{k\in\N}$,...,$(\textbf g_k^{(d)})_{k\in\N}$ are non-degenerated and essentially distinct. We are done.
\end{proof}
\begin{cor}\label{5.2.Sigma_ell^*HasDensityOne}
Let $(G,+)$ be a countable abelian group and let $d,m\in\N$. Every $\tilde \Sigma_m^*$ set in $G^d$ has uniform density one. 
\end{cor}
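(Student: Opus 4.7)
The plan is to argue by contradiction, leveraging \cref{5.2.FiniteSumsInPositiveDensitySets} as the workhorse. Suppose that $E \subseteq G^d$ is a $\tilde\Sigma_m^*$ set which fails to have uniform density one. Then by definition there exists a F{\o}lner sequence $(F_k)_{k\in\N}$ in $G^d$ with $\overline d_{(F_k)}(E) < 1$. Writing $E^c = G^d \setminus E$ and passing to a subsequence $(F_{k_j})_{j\in\N}$ along which $\frac{|E\cap F_{k_j}|}{|F_{k_j}|}$ converges to $\overline d_{(F_k)}(E)$, I would observe that $(F_{k_j})_{j\in\N}$ is itself a F{\o}lner sequence and that
$$\overline d_{(F_{k_j})}(E^c) = \lim_{j\rightarrow\infty}\frac{|E^c\cap F_{k_j}|}{|F_{k_j}|} = 1 - \overline d_{(F_k)}(E) > 0.$$

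Having produced a F{\o}lner sequence witnessing $\overline d_{(F_{k_j})}(E^c) > 0$, I would invoke \cref{5.2.FiniteSumsInPositiveDensitySets} (applied to $E^c$ with this F{\o}lner sequence and the prescribed values of $d$ and $m$) to extract non-degenerated and essentially distinct sequences $(\textbf g_k^{(j)})_{k\in\N} = (g^{(j)}_{k,1},\ldots,g^{(j)}_{k,m})_{k\in\N}$ in $G^m$, $j\in\{1,\ldots,d\}$, whose associated set $\{(g^{(1)}_\alpha,\ldots,g^{(d)}_\alpha)\,|\,\alpha\in\N^{(m)}\}$ is a $\tilde\Sigma_m$ subset of $E^c$. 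But then $E$ is disjoint from this $\tilde\Sigma_m$ set, contradicting the assumption that $E$ is $\tilde\Sigma_m^*$.

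All of the combinatorial and density-theoretic work has already been done in \cref{5.2.IPPoincare} and \cref{5.2.FiniteSumsInPositiveDensitySets}, so the only step here is the contrapositive packaging. Consequently, there is no real obstacle beyond the routine verification that a subsequence of a F{\o}lner sequence remains F{\o}lner, which is immediate from the definition.
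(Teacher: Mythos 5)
Your proof is correct and follows essentially the same route as the paper: argue contrapositively, observe that the complement of a set failing uniform density one has positive upper density along some F{\o}lner sequence, and then invoke \cref{5.2.FiniteSumsInPositiveDensitySets} to place a $\tilde\Sigma_m$ set inside that complement, contradicting the $\tilde\Sigma_m^*$ property. The passage to a subsequence is harmless but not needed, since $\overline d_{(F_k)}(G^d\setminus E)\geq 1-\overline d_{(F_k)}(E)>0$ already holds for the original F{\o}lner sequence.
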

\begin{proof}
We will assume that  $D\subseteq G^d$ does not have uniform density one and show that  $D$ is not a $\tilde\Sigma_m^*$ set. Indeed, if $D$ does not have uniform density one, then there exists a F{\o}lner sequence $(F_k)_{k\in\N}$ in $G^d$ for which $\overline d_{(F_k)}(D)<1$. Let $E=G^d\setminus D$ and note that $\overline d_{(F_k)}(E)>0$. By \cref{5.2.FiniteSumsInPositiveDensitySets},  $E$ contains a $\tilde\Sigma_m$ set. This implies that $D$ is not a $\tilde\Sigma_m^*$.
\end{proof}

\subsection{The ubiquity of $\tilde\Sigma_m^*$ sets}

In this section we will show that there exists a broad class of subgroups of $G^d$ with  the property that for each group $H$ from this class, any $\tilde\Sigma_m^*$ set in $G^d$ has a large intersection with $H$. In fact, we will show that either a subgroup $H$ belongs to this class or $G^d\setminus H$ is a $\tilde\Sigma_m^*$ set for any $m\in \N$. 
\begin{defn}
Let $(G,+)$ be a countable abelian group, let $d\in\N$ and let $H$ be a subgroup of $G^d$.  We say that $H$ is an \textbf{admissible subroup of $G^d$} if there exist non-degenerated and essentially distinct sequences $(g_k^{(1)})_{k\in\N}$,...,$(g_k^{(d)})_{k\in\N}$ in $G$ such that 
$$\{(g_k^{(1)},...,g_k^{(d)})\,|\,k\in\N\}\subseteq H.$$
\end{defn}
\begin{example}
Let $(G,+)$ be a countable abelian group and let $H=\{(g,h,0)\,|\,g,h\in G\}\subseteq G^3$. Clearly, $H$ is not an admissible subgroup of $G^3$.
\end{example}
\begin{example}
Let $(G,+)$ be a countable abelian group with an element $g$ of infinite order. For any $d\in\N$ and any distinct $a_1,...,a_d\in\Z\setminus\{0\}$, the set $\{(ka_1 g,ka_2 g,...,ka_dg)\,|\,k\in \Z\}$  is an admissible subgroup of  $G^d$.
\end{example}
\begin{example}
Let $(G,+)$ be a countable abelian torsion group (i.e. each of its elements has finite order). There exists a sequence $(g_k)_{k\in \N}$ in $G$ and a nested sequence of finite subgroups $(G_N)_{N\in\N}$ with the properties: (i)  $G_N$ is generated by $\{g_1,...,g_N\}$ and (ii) for each $k\in\N$, $g_{k+1}\not\in G_k$.
Then for any $d\in\N$ and any distinct $a_1,...,a_d\in\N$, the  group generated by the set $\{(g_{a_1k},g_{a_2k},...,g_{a_dk})\,|\,k\in\N\}$ is  an admissible subgroup of $G^d$. Indeed, note that for any $k\in\N$ and any $a,b\in\N$ with $a<b$, $g_{ak}\not\in G_{ak-1}$ and $(g_{bk}-g_{ak})\not\in G_{ak}$. So $\lim_{k\rightarrow\infty}g_{ak}=\infty$ and $\lim_{k\rightarrow\infty}(g_{bk}-g_{ak})=\infty$.
\end{example}
The following proposition provides a useful characterization of admissible subgroups. 
\begin{prop}\label{5.3.CharacterizationOfAdmissible}
Let $(G,+)$ be a countable abelian group, let $d\in\N$ and let $H$ be a subgroup of $G^d$. The following statements are equivalent:
\begin{enumerate}[(i)]
    \item $H$ is an admissible subgroup of $G^d$.
      \item  There exist an $m\in\N$ and a  $\tilde\Sigma_m$ set $E\subseteq G^d$ such that $E\subseteq H$.
    \item For  any $m\in\N$, there exists a  $\tilde\Sigma_m$ set $E\subseteq G^d$ such that $E\subseteq H$.
    \item There exists an \text{\rm{$\tilde{\text{IP}}$}} set $E\subseteq G^d$ such that $E\subseteq H$.
    \item For any $j\in\{1,...,d\}$, $\pi_j(H)$ is infinite and for any $i\neq j$, $(\pi_j-\pi_i)(H)$ is also infinite, where for each $j\in\{1,...,d\}$, $\pi_j:H\rightarrow G$ is defined by $\pi_j(g_1,...,g_d)=g_j$.
\end{enumerate}
\end{prop}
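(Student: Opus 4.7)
The plan is to establish the cyclic chain $(i)\Rightarrow(iv)\Rightarrow(iii)\Rightarrow(ii)\Rightarrow(v)\Rightarrow(i)$. Three of the links are short. First, $(iv)\Rightarrow(iii)$ is immediate from \cref{5.1.SigmaInEveryIP}: any $\tilde{\text{IP}}$ set already contains a $\tilde\Sigma_m$ set for every $m$, so an $\tilde{\text{IP}}$ set inside $H$ produces $\tilde\Sigma_m$ sets inside $H$ for every $m$. Next, $(iii)\Rightarrow(ii)$ is trivial. For $(ii)\Rightarrow(v)$, write a typical element of a $\tilde\Sigma_m$ set $E\subseteq H$ as $\textbf g^{(1)}_{k_1}+\cdots+\textbf g^{(m)}_{k_m}$. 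Fixing $k_1<\cdots<k_{m-1}$ and letting $k_m\to\infty$, the $j$-th coordinate is a constant plus $g^{(m)}_{k_m,j}$, which by non-degeneracy leaves every finite set; hence $\pi_j(H)\supseteq\pi_j(E)$ is infinite. The same argument with the coordinate grow-apart condition shows $(\pi_j-\pi_i)(H)\supseteq (\pi_j-\pi_i)(E)$ is infinite.

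For $(i)\Rightarrow(iv)$, start from the witnessing sequence $(\textbf g_k)_{k\in\N}=(g^{(1)}_k,\ldots,g^{(d)}_k)_{k\in\N}$ in $H$ supplied by admissibility and thin it to a subsequence whose FS-set, which automatically lies in $H$, satisfies the stronger IP-limit conditions of \cref{5.1.DefnOfIPtilde}. Exhaust $G$ by finite sets $F_1\subseteq F_2\subseteq\cdots$ with $\bigcup_n F_n=G$. Having chosen $k_1<\cdots<k_n$, pick $k_{n+1}$ so large that for every $\alpha\subseteq\{k_1,\ldots,k_n\}$ (including the empty set), every $j\in\{1,\ldots,d\}$, and every $i\neq j$,
\begin{equation*}
g^{(j)}_{k_{n+1}}+\sum_{m\in\alpha}g^{(j)}_{m}\notin F_{n+1}\quad\text{and}\quad (g^{(j)}_{k_{n+1}}-g^{(i)}_{k_{n+1}})+\sum_{m\in\alpha}(g^{(j)}_{m}-g^{(i)}_{m})\notin F_{n+1}.
\end{equation*}
Since the sequences $(g^{(j)}_k)_k$ and $(g^{(j)}_k-g^{(i)}_k)_k$ leave every finite subset of $G$, each of these finitely many constraints restricts $k_{n+1}$ to a cofinite set, so the selection succeeds. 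The resulting FS-set is an $\tilde{\text{IP}}$ set contained in $H$.

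The main obstacle is $(v)\Rightarrow(i)$, and the engine is B.~H.~Neumann's covering lemma: no group is a union of finitely many cosets of subgroups of infinite index. Under (v) the homomorphisms $\pi_j|_H$ and $(\pi_j-\pi_i)|_H$ all have infinite-index kernels in $H$. Exhaust $G$ by finite sets $F_1\subseteq F_2\subseteq\cdots$ with $\bigcup_n F_n=G$, and for each $n$ let
\begin{equation*}
B_n=\bigcup_{j=1}^d (\pi_j|_H)^{-1}(F_n)\;\cup\;\bigcup_{i\neq j}((\pi_j-\pi_i)|_H)^{-1}(F_n),
\end{equation*}
a finite union of cosets of the kernels above. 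By Neumann's lemma $H\setminus B_n\neq\emptyset$, so we may pick some $\textbf h_n=(h^{(1)}_n,\ldots,h^{(d)}_n)\in H\setminus B_n$. Since $F_N\subseteq F_n$ for $n\geq N$, the choices force $h^{(j)}_n\notin F_N$ and $h^{(j)}_n-h^{(i)}_n\notin F_N$ for all $n\geq N$, so each coordinate sequence and each coordinate difference eventually escapes every finite subset of $G$. Thus $(\textbf h_n)_n\subseteq H$ witnesses the admissibility of $H$, completing the cycle.
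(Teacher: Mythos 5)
Your proof is correct, but the decisive step is handled by a genuinely different device than in the paper. The paper runs the chain (i)$\Leftrightarrow$(ii), (i)$\Rightarrow$(iii)$\Rightarrow$(iv)$\Rightarrow$(v), and then proves (v)$\Rightarrow$(i) by a self-contained extremal argument: among the $d^2$ maps $P=\{\pi_j\}\cup\{\pi_j-\pi_i\}$ it takes a maximal subfamily $F_0$ that can be sent to infinity simultaneously along a single sequence in $H$, and if $F_0\neq P$ it adds one more map $\pi_0$ by summing a sequence witnessing $F_0$ (along which the maps outside $F_0$ are eventually constant, after passing to a subsequence) with a sequence from (v) making $\pi_0$ blow up, contradicting maximality. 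You instead dispose of (v)$\Rightarrow$(i) in one stroke with B.~H.~Neumann's covering lemma: since each $\pi_j|_H$ and $(\pi_j-\pi_i)|_H$ has infinite image, its kernel has infinite index in $H$, so the finite union of cosets $B_n$ cannot exhaust $H$, and the elements $\textbf h_n\in H\setminus B_n$ witness admissibility directly; this is shorter and avoids the subsequence bookkeeping, at the cost of importing a (classical, but external) group-theoretic theorem where the paper stays elementary. Your remaining links are also arranged differently and are all sound: (iv)$\Rightarrow$(iii) via \cref{5.1.SigmaInEveryIP}, the coordinate/difference computation for (ii)$\Rightarrow$(v), and, for (i)$\Rightarrow$(iv), an explicit inductive thinning in which the element $g_\beta$ avoids $F_{\max\beta}$, which indeed yields the limit conditions \eqref{5.1.IPcondition1} and \eqref{5.1.IPcondition2} because the sets $F_n$ increase and exhaust $G$ and $H$ is a subgroup, so the FS-set stays in $H$; the paper compresses these steps into implications it declares routine, so your write-up is in this respect more detailed than the original.
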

\begin{proof}
It is not hard to see that (i) and (ii) are equivalent. The implications (i)$\implies$(iii), (iii)$\implies$(iv) and (iv)$\implies$(v) are trivial.  We will now prove  (v)$\implies$(i).\\
Let $P=\{\pi_j\,|\,j\in\{1,...,d\}\}\cup\{\pi_j-\pi_i\,|\,i,j\in\{1,...,d\},\,i\neq j\}$ and let $M$ be the largest non-negative integer for which there exist an $F\subseteq P$ with $|F|=M$ and a sequence $(\textbf g_k)_{k\in\N}$ in $H$ such that for any $\pi\in F$, 
$\lim_{k\rightarrow\infty}\pi(\textbf g_k)=\infty$.  Since $|P|=d^2$, we have $M\leq d^2$. Also, since for each $\pi\in P$, $\pi(H)$ is infinite, $M\geq 1$. If $M=d^2$, then (i) holds. So, assume for contradiction that $M<d^2$.\\
By the definition of $M$, there exists a set $F_0\subseteq P$ with $|F_0|=M$ and a sequence $(\textbf g_k)_{k\in\N}$ in $H$ such that if $\pi\in F_0$, $\lim_{k\rightarrow\infty}\pi(\textbf g_k)=\infty$ and if $\pi\in (P\setminus F_0)$, then there exists a finite set $A_\pi\subseteq G$ such that $\{\pi(\textbf g_k)\,|\,k\in\N\}\subseteq A_\pi$. By passing, if needed, to a subsequence, we can assume that for each $\pi\in (P\setminus F_0)$, there exists a $g_\pi\in G$ such that $\lim_{k\rightarrow\infty}\pi(\textbf g_k)=g_\pi$. Let $\pi_0\in (P\setminus F_0)$. By (v), there exists a sequence $(\textbf g'_k)_{k\in\N}$ in $H$ such that $\lim_{k\rightarrow\infty}\pi_0(\textbf g'_k)=\infty$. Note that for any finite set $A\subseteq H$, any $\pi\in F_0$ and any $t\in\N$, there exists a $k\in\N$ such that for any $k'>k$,
$$\pi(\textbf g_{k'}+\textbf g'_t)=\pi(\textbf g_{k'})+\pi(\textbf g'_t)\not\in A.$$
Also, note that  there exists a $k_0\in\N$ such that for any $k>k_0$, $\pi_0(\textbf g_k)=g_{\pi_0}$.
It follows that we can find an increasing sequence $(k_t)_{t\in\N}$ in $\N$ for which $\lim_{t\rightarrow\infty}\pi(\textbf g_{k_t}+\textbf g'_t)=\infty$ for each $\pi\in F_0\cup\{\pi_0\}$. This contradicts the definition of $M$, completing the proof. 
\end{proof}
\begin{cor}
Let $(G,+)$ be a countable abelian group and let $d\in\N$. A subgroup $H$ of $G^d$ is either   admissible or for any $m\in\N$, $G^d\setminus H$ is a $\tilde\Sigma_m^*$ set.
\end{cor}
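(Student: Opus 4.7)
The plan is to deduce the statement as an easy corollary of \cref{5.3.CharacterizationOfAdmissible}. Assume $H$ is not admissible. By the contrapositive of the equivalence (i)$\Leftrightarrow$(v) in that proposition, there exists some $\pi$ in the set $P=\{\pi_j\,|\,1\le j\le d\}\cup\{\pi_j-\pi_i\,|\,i,j\in\{1,\dots,d\},\,i\neq j\}$ for which $\pi(H)$ is finite. Fix such a $\pi$ and fix an arbitrary $m\in\N$. To show that $G^d\setminus H$ is $\tilde\Sigma_m^*$ it suffices to show that $E\not\subseteq H$ for every $\tilde\Sigma_m$ set $E\subseteq G^d$, equivalently that $E\cap(G^d\setminus H)\neq\emptyset$.

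So let $E$ be a $\tilde\Sigma_m$ set, generated by non-degenerated sequences $(\textbf g^{(j)}_k)_{k\in\N}=(g^{(j)}_{k,1},\dots,g^{(j)}_{k,d})_{k\in\N}$, $j\in\{1,\dots,m\}$, whose coordinate sequences pairwise grow apart. Set $h^{(j)}_k=\pi(\textbf g^{(j)}_k)\in G$. If $\pi=\pi_l$ then $h^{(j)}_k=g^{(j)}_{k,l}$ and non-degeneracy gives $\lim_{k\to\infty}h^{(j)}_k=\infty$; if $\pi=\pi_l-\pi_{l'}$ with $l\neq l'$ then $h^{(j)}_k=g^{(j)}_{k,l}-g^{(j)}_{k,l'}$ and the growing-apart condition built into the definition of $\tilde\Sigma_m$ gives the same. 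Thus each $(h^{(j)}_k)_{k\in\N}$ tends to infinity in $G$, so the image
\[
\pi(E)=\{h^{(1)}_{k_1}+\cdots+h^{(m)}_{k_m}\,|\,k_1<\cdots<k_m\}
\]
is a $\Sigma_m$ set in $G$. Any such $\Sigma_m$ set is infinite: fixing $k_1<\cdots<k_{m-1}$ and letting $k_m$ range over integers greater than $k_{m-1}$ already produces infinitely many distinct values, because $h^{(m)}_{k_m}\to\infty$.

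If $E\subseteq H$ then $\pi(E)\subseteq\pi(H)$, forcing the infinite set $\pi(E)$ to lie inside the finite set $\pi(H)$, a contradiction. Hence $E$ meets $G^d\setminus H$, which is what we needed. Since the heavy lifting (the equivalence (i)$\Leftrightarrow$(v)) is already packaged in \cref{5.3.CharacterizationOfAdmissible}, there is no genuine obstacle to overcome; the only point worth emphasizing is that the growing-apart requirement baked into \cref{1.SigmaTildeDefn} is precisely what guarantees that even the difference projections $\pi_j-\pi_i$ send the generating sequences to sequences tending to infinity in $G$, so the argument goes through uniformly for every $\pi\in P$.
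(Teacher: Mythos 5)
Your proof is correct, but it takes a different route through \cref{5.3.CharacterizationOfAdmissible} than the paper does. The paper's proof is a two-line application of the equivalence (i)$\Leftrightarrow$(ii) of that proposition: if $H$ is not admissible, then $H$ contains no $\tilde\Sigma_m$ set for any $m$, which is verbatim the statement that $G^d\setminus H$ is $\tilde\Sigma_m^*$. You instead invoke (i)$\Leftrightarrow$(v) to extract a single $\pi\in P$ with $\pi(H)$ finite, and then supply the extra argument that the image of any $\tilde\Sigma_m$ set under such a $\pi$ is a $\Sigma_m$ set in $G$, hence infinite, hence cannot sit inside the finite set $\pi(H)$. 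That extra step is sound (the non-degeneracy and growing-apart clauses of \cref{1.SigmaTildeDefn} are exactly what make each $(\pi(\textbf g^{(j)}_k))_{k\in\N}$ tend to infinity, and a $\Sigma_m$ set is infinite because its last generating sequence already is), and the only wrinkle worth noting is that the $\pi_j$ in statement (v) are defined on $H$, so you are tacitly extending them to all of $G^d$ --- harmless, since they are coordinate projections and their differences. What your route buys is self-containedness: the paper dismisses (i)$\Leftrightarrow$(ii) with ``it is not hard to see,'' whereas (v)$\Rightarrow$(i) is the implication it actually proves in detail, so your argument leans only on the part of the proposition that is fully justified. What it costs is length: you essentially re-prove the contrapositive of the (easy) implication (ii)$\Rightarrow$(v) inline rather than citing (ii) directly.
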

\begin{proof}
If $H$ is not an admissible subgroup, \cref{5.3.CharacterizationOfAdmissible}, (ii), implies that for each $m\in\N$, $H$ does not contain any  $\tilde\Sigma_m$ set in $G^d$. Thus, $G^d\setminus H$ is a $\tilde\Sigma_m^*$ set for each $m\in\N$. 
\end{proof}
Before stating and proving one of the main results of this subsection  which deals with the ubiquity of $\tilde\Sigma_m^*$ sets in admissible subgroups (\cref{5.3.SigmaOnAdmisibleSubgroups} below), we need one more definition and a technical lemma.
\begin{defn}\label{5.3.DefnSigma_m*InH}
Let $(G,+)$ be a countable abelian group, let $d,m\in\N$ and let $H\subseteq G^d$ be an admissible subgroup. A set $E\subseteq H$ is called an \rm{$H$-}$\tilde\Sigma_m^*$ set if it has a non-trivial intersection with every $\tilde\Sigma_m$ set contained in  $H$. Similarly, a set $E\subseteq H$ is called an \rm{$H$-}\text{\rm{$\tilde{\text{IP}}\rm{^*}$}} set if it has a non-trivial intersection with every \text{\rm{$\tilde{\text{IP}}$}} set contained in $H$. 
\end{defn}
\begin{rem} 
Let $(G,+)$ be a countable abelian group, let $d\in\N$ and let $H\subseteq G^d$ be an admissible subgroup of $G^d$. It is useful to percieve $H$-$\tilde\Sigma_m^*$ sets as relative versions of $\tilde\Sigma_m^*$ sets in $G^d$. Note that  if $H$ is a proper subgroup of $G^d$, $H$-$\tilde\Sigma_m^*$ sets are not $\tilde\Sigma_m^*$. Indeed,  since for each  $m\in\N$, any translation of a $\tilde\Sigma_m$ set in $G^d$ is again a $\tilde\Sigma_m$ set,  every coset of $H$ contains a $\tilde\Sigma_m$ set in $G^d$. It follows that  $G^d\setminus H$ contains a $\tilde\Sigma_m$ set for each $m\in\N$. Hence, no \rm{$H$-}$\tilde\Sigma_m^*$ set is a  $\tilde\Sigma_m^*$ set.
\end{rem}
\begin{rem}\label{5.3.SigmaOnLateralClasses}
Let $(G,+)$ be a countable abelian group, let $d,m\in\N$, let $H\subseteq G^d$ be an admissible subgroup and let $E$ be a $\tilde\Sigma_m^*$ set in $G^d$. It follows from the  definition that $E\cap H$ is a \rm{$H$-}$\tilde\Sigma_m^*$ set. Indeed, let $D\subseteq H$ be a $\tilde\Sigma_m$ set. We have $(E\cap H)\cap D=E\cap  D\neq \emptyset$. Note also that for any $\textbf g\in G^d$, $E\cap (\textbf g+H)$ is the translation of the \rm{$H$-}$\tilde\Sigma_m^*$ set $(-\textbf g+E)\cap H$. Thus, the cosets of $H$ have a large intersection with $E$ as well.
\end{rem}
\begin{lem}\label{5.3.FiniteSumsInPositiveDensitySetsInH}
Let $(G,+)$ be a countable abelian group, let $d,m\in\N$, let $H$ be an admissible subgroup of $G^d$ and let $(F_k)_{k\in\N}$ be a F{\o}lner sequence in $H$. Any $E\subseteq H$ with $\overline d_{(F_k)}(E)>0$ contains a $\tilde\Sigma_m$ set.
\end{lem}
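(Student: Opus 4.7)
The proof will closely parallel those of Propositions \ref{5.2.IPPoincare} and \ref{5.2.FiniteSumsInPositiveDensitySets}, the key point being that both of those arguments go through verbatim with $G^d$ replaced by the admissible subgroup $H$, provided we can produce an $\tilde{\text{IP}}$ set that sits inside $H$ to seed the construction. This initial ingredient is supplied by the admissibility of $H$ together with \cref{5.3.CharacterizationOfAdmissible} (iv), which yields an $\tilde{\text{IP}}$ set $D = \{\textbf h_\alpha\,|\,\alpha\in\mathcal F\}\subseteq H$ generated by a sequence $(\textbf h_k)_{k\in\N}$ in $H$ that satisfies \eqref{5.1.IPcondition1} and \eqref{5.1.IPcondition2} (these conditions are stated in terms of the $G$-coordinates, not the subgroup structure of $H$).

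My first step is to rerun the argument of \cref{5.2.IPPoincare} with $E \subseteq H$, the $\tilde{\text{IP}}$ set $D \subseteq H$, and the F{\o}lner sequence $(F_k)_{k\in\N}$ in $H$. That argument uses only (i) the abelian group operation restricted to the set where the multi-recurrence takes place, and (ii) the translation invariance of $\overline d_{(F_k)}$ under translations by elements of $H$, which holds because $(F_k)$ is F{\o}lner in $H$. The argument produces, by iterated pigeonholing, a sequence of blocks $(\gamma_k)_{k\in\N}$ with $\gamma_k < \gamma_{k+1}$, from which the sequence $\textbf g_k = \textbf h_{\gamma_k}$ generates a sub-$\tilde{\text{IP}}$ set $D'' = \{\textbf g_\alpha\,|\,\alpha\in\mathcal F\}\subseteq D \subseteq H$ such that
$$\overline d_{(F_k)}\bigl(\bigcap_{\emptyset\neq\beta\subseteq\alpha}(E - \textbf g_\beta)\bigr) > 0 \quad \text{for every } \alpha \in \mathcal F.$$
The inheritance of \eqref{5.1.IPcondition1} and \eqref{5.1.IPcondition2} by $(\textbf g_k)$ from $(\textbf h_k)$ follows exactly as at the end of the proof of \cref{5.2.IPPoincare}.

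Next I will induct on $m$, following the proof of \cref{5.2.FiniteSumsInPositiveDensitySets}. For the base case $m=1$, I apply the pigeonholing step of that proof to $D''$ and $E$: each element $\textbf b_k$ produced by the multi-recurrence lies in $H$ (since $\textbf b_k + \textbf h_{\alpha_t}\in E\subseteq H$ and $\textbf h_{\alpha_t}\in H$), and the same counting argument extracts indices $(k_t)_{t\in\N}$ so that the resulting sequence $(\textbf b_t + \textbf h_{\alpha_{k_t}})_{t\in\N}$ lies in $E$ and has its coordinates non-degenerate and pairwise growing apart in $G$, giving a $\tilde\Sigma_1$ set in $E$. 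For the inductive step $m > 1$, I apply \cref{5.1.SigmaInEveryIP} to the $\tilde{\text{IP}}$ set $D''\subseteq H$ to obtain non-degenerated and essentially distinct sequences $(\textbf f^{(j)}_k)_{k\in\N}$ in $G^{m-1}$ with $\{(f^{(1)}_\alpha,\ldots,f^{(d)}_\alpha)\,|\,\alpha\in\N^{(m-1)}\}\subseteq D'' \subseteq H$; then the sets $E_k$ defined as in \eqref{5.2.E_kDefn} lie in $H$ and have positive density with respect to $(F_k)$ by the multi-recurrence established above, and the $m=1$ case supplies the remaining coordinate to complete the $\tilde\Sigma_m$ set.

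The only step that requires care, and which I expect to be the main (though mild) obstacle, is verifying that all intermediate objects — the sub-$\tilde{\text{IP}}$ set $D''$, the elements $\textbf b_k$, the sets $E_k$, and the final sequences — remain inside $H$ while simultaneously the growing-apart and non-degeneracy conditions (stated in terms of $G$) are preserved. Membership in $H$ is automatic because $H$ is a subgroup and every operation we perform is a sum or translate of elements of $H$; the $G$-level growing-apart conditions are preserved because sub-sequencing and grouping an $\tilde{\text{IP}}$ set (respectively a $\tilde\Sigma_m$ set) yields another $\tilde{\text{IP}}$ (respectively $\tilde\Sigma_m$) set in $G^d$. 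Once this bookkeeping is done, the proof of \cref{5.2.FiniteSumsInPositiveDensitySets} transfers mutatis mutandis to establish the lemma.
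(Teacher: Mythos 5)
Your proposal is correct and follows essentially the same route as the paper, whose proof is exactly the sketch you fleshed out: use admissibility to produce an $\tilde{\text{IP}}$ set $D'\subseteq H$, then repeat the proof of \cref{5.2.FiniteSumsInPositiveDensitySets} with $D$ replaced by $D'$, using the appropriately relativized version of \cref{5.2.IPPoincare} (translation invariance of $\overline d_{(F_k)}$ under $H$-translates). The bookkeeping you highlight (membership in $H$ and preservation of the $G$-level non-degeneracy/growing-apart conditions) is precisely what the paper leaves to the reader.
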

\begin{proof}
Since $H$ is admissible, there exists an   \text{\rm{$\tilde{\text{IP}}$}} set $D'\subseteq H$. The result in question follows by replacing $D$ by $D'$ in the proof of \cref{5.2.FiniteSumsInPositiveDensitySets} and applying an adequate modification of \cref{5.2.IPPoincare}. 
\end{proof}
\begin{thm}\label{5.3.SigmaOnAdmisibleSubgroups}
Let $(G,+)$ be a countable abelian group, let $d,m\in\N$ and let $H\subseteq G^d$ be an admissible subgroup. Any \rm{$H$-}$\tilde\Sigma_m^*$ set is an \rm{$H$-}\text{\rm{$\tilde{\text{IP}}\rm{^*}$}} set and has uniform density one in $H$.  
\end{thm}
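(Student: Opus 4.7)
The plan is to treat this theorem as the relativized version of Corollaries \ref{5.1.EverySigma*IsIP*} and \ref{5.2.Sigma_ell^*HasDensityOne}, so both halves should follow by essentially the same arguments that established those two corollaries, but applied inside the ambient admissible subgroup $H$ rather than inside $G^d$.

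For the first claim (any $H$-$\tilde\Sigma_m^*$ set is an $H$-$\tilde{\text{IP}}^*$ set), I would let $E \subseteq H$ be $H$-$\tilde\Sigma_m^*$ and let $D \subseteq H$ be an arbitrary $\tilde{\text{IP}}$ set. Invoking Lemma \ref{5.1.SigmaInEveryIP} directly, $D$ contains a $\tilde\Sigma_m$ set $D'$. The key observation is that the $\tilde\Sigma_m$ set produced in the proof of Lemma \ref{5.1.SigmaInEveryIP} is generated by sub-sums of the original $\tilde{\text{IP}}$-generating sequence, which lie in $D \subseteq H$, and since $H$ is a subgroup, all these sub-sums (and sums of them) remain in $H$. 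Hence $D' \subseteq H$, so $E \cap D' \neq \emptyset$, which gives $E \cap D \neq \emptyset$, establishing the $H$-$\tilde{\text{IP}}^*$ property.

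For the second claim (uniform density one in $H$), I would argue by contradiction. Suppose $E \subseteq H$ is $H$-$\tilde\Sigma_m^*$ but fails to have uniform density one in $H$. Then there is a F{\o}lner sequence $(F_k)_{k\in\N}$ in $H$ with $\overline d_{(F_k)}(H \setminus E) > 0$. Applying Lemma \ref{5.3.FiniteSumsInPositiveDensitySetsInH} to $H \setminus E \subseteq H$, we obtain a $\tilde\Sigma_m$ set $D \subseteq H \setminus E$. But $D \subseteq H$ and $D \cap E = \emptyset$ contradicts the $H$-$\tilde\Sigma_m^*$ property of $E$.

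There is no serious obstacle: both pieces reduce immediately to results already established earlier. If anything, the only subtlety worth flagging is making sure that the sequences used in Lemma \ref{5.1.SigmaInEveryIP} to build a $\tilde\Sigma_m$ set inside an $\tilde{\text{IP}}$ set really do yield elements of $H$ when the generating sequence is in $H$, but this is automatic from the subgroup property since every element produced is a finite sum of elements of $H$.
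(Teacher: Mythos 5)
Your proposal is correct and follows essentially the same route as the paper: the $H$-$\tilde{\text{IP}}^*$ claim comes from Lemma \ref{5.1.SigmaInEveryIP} (the $\tilde\Sigma_m$ set it produces is already a subset of the given $\tilde{\text{IP}}$ set $D\subseteq H$, so containment in $H$ is immediate), and the uniform density statement follows by running the argument of Corollary \ref{5.2.Sigma_ell^*HasDensityOne} with Lemma \ref{5.3.FiniteSumsInPositiveDensitySetsInH} in place of Proposition \ref{5.2.FiniteSumsInPositiveDensitySets}.
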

\begin{proof}
Let $E'\subseteq H$ be an \rm{$H$-}$\tilde\Sigma_m^*$ set. By \cref{5.1.SigmaInEveryIP}, every \text{\rm{$\tilde{\text{IP}}$}} set contains a $\tilde\Sigma_m$ set. It follows that $E'$ is an \rm{$H$-}\text{\rm{$\tilde{\text{IP}}\rm{^*}$}} set. By \cref{5.3.FiniteSumsInPositiveDensitySetsInH}, we can argue as in the proof of  \cref{5.2.Sigma_ell^*HasDensityOne} to show that $E'$ has uniform density one in $H$. 
\end{proof}
\begin{cor}
Let $(G,+)$ be a countable abelian group, let $d\in \N$, let $H$ be an admissible subgroup of $G^d$  and let $(X,\mathcal A,\mu,(T_g)_{g\in G})$ be a strongly mixing system.  For any $\textbf g\in G^d$, each set of the form $R_\epsilon(A_0,...,A_\ell)\cap(\textbf g+ H)$ is the translation of a set with uniform density one in $H$.
\end{cor}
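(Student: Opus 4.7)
The plan is to read the corollary with $d=\ell$, so that $R_\epsilon(A_0,\ldots,A_\ell)$ and $H$ both live in the ambient group $G^\ell$, and then assemble three previously-established results: \cref{3.MainResult}, the coset observation in \cref{5.3.SigmaOnLateralClasses}, and \cref{5.3.SigmaOnAdmisibleSubgroups}.

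First, I would apply the implication (i)$\Rightarrow$(iii) of \cref{3.MainResult}: since $(T_g)_{g\in G}$ is strongly mixing, the set $R_\epsilon(A_0,\ldots,A_\ell)$ is $\tilde\Sigma_\ell^*$ in $G^\ell$. Next, for any $\textbf g\in G^\ell$ I would decompose
$$R_\epsilon(A_0,\ldots,A_\ell)\cap(\textbf g+H)=\textbf g+\bigl[(-\textbf g+R_\epsilon(A_0,\ldots,A_\ell))\cap H\bigr],$$
and note that translation manifestly sends $\tilde\Sigma_\ell$ sets to $\tilde\Sigma_\ell$ sets, so $-\textbf g+R_\epsilon(A_0,\ldots,A_\ell)$ is still $\tilde\Sigma_\ell^*$ in $G^\ell$. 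As recorded in \cref{5.3.SigmaOnLateralClasses} (the observation being that any $\tilde\Sigma_\ell$ subset of the admissible subgroup $H$ is a fortiori a $\tilde\Sigma_\ell$ subset of $G^\ell$), the intersection of a $\tilde\Sigma_\ell^*$ set in $G^\ell$ with $H$ is an $H$-$\tilde\Sigma_\ell^*$ set in the sense of \cref{5.3.DefnSigma_m*InH}.

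Finally, I would invoke \cref{5.3.SigmaOnAdmisibleSubgroups}, according to which every $H$-$\tilde\Sigma_\ell^*$ set has uniform density one in $H$. Applying this to the $H$-$\tilde\Sigma_\ell^*$ set $E:=(-\textbf g+R_\epsilon(A_0,\ldots,A_\ell))\cap H$, one concludes that $E\subseteq H$ has uniform density one in $H$ and that
$$R_\epsilon(A_0,\ldots,A_\ell)\cap(\textbf g+H)=\textbf g+E,$$
exhibiting $R_\epsilon(A_0,\ldots,A_\ell)\cap(\textbf g+H)$ as a translate of a set of uniform density one in $H$, which is the claim.

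There is no real obstacle here: the corollary is a packaging statement, combining the $\tilde\Sigma_\ell^*$ characterization of strong mixing (\cref{3.MainResult}) with the relative largeness/ubiquity results of Subsection 5.3. The only mild point is the verification that translation preserves the $\tilde\Sigma_\ell$ class in $G^\ell$ (immediate from the definition since translating each summand sequence $(\mathbf g_k^{(j)})_{k\in\N}$ by a constant shift affects only finitely many terms, keeping the non-degenerated and growing-apart conditions intact), and the observation that $H$-$\tilde\Sigma_\ell^*$-ness of an intersection $E'\cap H$ follows from $\tilde\Sigma_\ell^*$-ness of $E'$ in $G^\ell$ because every $\tilde\Sigma_\ell$ set contained in $H$ is also a $\tilde\Sigma_\ell$ set in $G^\ell$.
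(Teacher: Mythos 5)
Your proposal is correct and follows exactly the paper's intended route: the paper's own proof is a one-line citation of \cref{3.MainResult}, \Cref{5.3.SigmaOnLateralClasses} and \cref{5.3.SigmaOnAdmisibleSubgroups}, which is precisely the chain (strong mixing $\Rightarrow$ $R_\epsilon$ is $\tilde\Sigma_\ell^*$, then translate-and-intersect to get an $H$-$\tilde\Sigma_\ell^*$ set, then uniform density one in $H$) that you spell out. Your extra remarks on translation invariance of the $\tilde\Sigma_\ell$ class and on why intersecting with $H$ yields an $H$-$\tilde\Sigma_\ell^*$ set are exactly the verifications recorded in \Cref{5.3.SigmaOnLateralClasses}.
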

\begin{proof}
This result follows from \cref{3.MainResult}, \Cref{5.3.SigmaOnLateralClasses} and \cref{5.3.SigmaOnAdmisibleSubgroups}.
\end{proof}

A natural class of admissible subgroups in $G^d$ is provided by the one-parameter subgroups of the form $$H_{\phi_1,...,\phi_d}=\{(\phi_1(g),...,\phi_d(g))\,|\,g\in G\},$$ where  $\phi_1,...,\phi_d:G\rightarrow G$ are homomorphisms such that for any $j\in\{1,...,d\}$, $|\ker(\phi_j)|<\infty$ and for any $i\neq j$, $|\ker(\phi_j-\phi_i)|<\infty$. The following proposition, alluded to in \Cref{4.CombinatorialRemark}, involves preimages of sets in $G^d$ via the elements of $H_{\phi_1,...,\phi_d}$  and provides an alternative proof of \cref{4.InjectiveDiagonalResult}.  
\begin{prop}\label{5.3.UsefulInSection4}
Let $(G,+)$ be a countable abelian group, let $d,m\in\N$ and let $\phi_1,...,\phi_d:G\rightarrow G$ be homomorphisms such that for any $j\in\{1,...,d\}$, $\ker(\phi_j)$ is finite and for any $i\neq j$, $\ker(\phi_j-\phi_i)$ is also finite. If $E\subseteq G^d$ is a $\tilde\Sigma_m^*$ set, then $E'=\{g\in G\,|\,(\phi_1(g),...,\phi_d(g))\in E\}$ is a $\Sigma_m^*$ set in $G$.
\end{prop}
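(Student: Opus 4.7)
The plan is to prove $E'$ is $\Sigma_m^*$ by taking an arbitrary $\Sigma_m$ set $D \subseteq G$ and showing $E' \cap D \neq \emptyset$. Write $D = \{h^{(1)}_{k_1} + \cdots + h^{(m)}_{k_m} \mid k_1 < \cdots < k_m\}$ for sequences $(h^{(j)}_k)_{k \in \mathbb{N}}$ in $G$ with $\lim_{k \to \infty} h^{(j)}_k = \infty$ for each $j \in \{1,\ldots,m\}$.

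The key idea is to push $D$ forward into $G^d$ via the map $g \mapsto (\phi_1(g),\ldots,\phi_d(g))$ and obtain a $\tilde{\Sigma}_m$ set there. Specifically, for each $j \in \{1,\ldots,m\}$, define
\begin{equation*}
(\textbf{g}^{(j)}_k)_{k\in\N} = (\phi_1(h^{(j)}_k),\ldots,\phi_d(h^{(j)}_k))_{k\in\N}.
\end{equation*}
By linearity of the $\phi_t$, the sum $\textbf{g}^{(1)}_{k_1} + \cdots + \textbf{g}^{(m)}_{k_m}$ equals $(\phi_1(h^{(1)}_{k_1}+\cdots+h^{(m)}_{k_m}),\ldots,\phi_d(h^{(1)}_{k_1}+\cdots+h^{(m)}_{k_m}))$, so the set $\tilde{D} = \{\textbf{g}^{(1)}_{k_1}+\cdots+\textbf{g}^{(m)}_{k_m} \mid k_1 < \cdots < k_m\}$ is precisely the image of $D$ under this map.

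The main verification is that $\tilde{D}$ is a $\tilde{\Sigma}_m$ set in $G^d$, which requires checking the two conditions in \cref{1.SigmaTildeDefn}: each $(\textbf{g}^{(j)}_k)_k$ must be non-degenerated, and for distinct $t,t' \in \{1,\ldots,d\}$ the component sequences $(\phi_t(h^{(j)}_k))_k$ and $(\phi_{t'}(h^{(j)}_k))_k$ must grow apart. Both reduce to the observation that if $\psi : G \to G$ is a homomorphism with $|\ker(\psi)| < \infty$ and $g_k \to \infty$ in $G$, then $\psi(g_k) \to \infty$ (since for every finite $F \subseteq G$, the preimage $\psi^{-1}(F)$ is finite). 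Applying this to $\psi = \phi_t$ gives non-degeneracy, and applying it to $\psi = \phi_t - \phi_{t'}$ gives the grow-apart condition; both hypotheses on the kernels are used exactly once.

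Once $\tilde{D}$ is shown to be a $\tilde{\Sigma}_m$ set, the hypothesis that $E$ is $\tilde{\Sigma}_m^*$ immediately yields $\textbf{x} \in E \cap \tilde{D}$ for some $\textbf{x}$, and unwinding the definitions gives $k_1 < \cdots < k_m$ with $h^{(1)}_{k_1}+\cdots+h^{(m)}_{k_m} \in E' \cap D$. No further obstacle arises — the argument is essentially a one-line transfer principle once the kernel-finiteness hypotheses are unpacked correctly; the only subtlety is making sure the ``grow apart'' condition on cross-components of a single $(\textbf{g}^{(j)}_k)_k$ is handled using $\ker(\phi_t - \phi_{t'})$ rather than being confused with the essentially-distinct condition across different $j$'s (which is not required by \cref{1.SigmaTildeDefn}).
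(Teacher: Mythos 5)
Your proposal is correct and follows essentially the same route as the paper's proof: push the generating sequences of an arbitrary $\Sigma_m$ set $D\subseteq G$ forward via $g\mapsto(\phi_1(g),\ldots,\phi_d(g))$, use the finiteness of $\ker(\phi_t)$ and $\ker(\phi_t-\phi_{t'})$ to see the image is a $\tilde\Sigma_m$ set in $G^d$, and then intersect it with $E$ to get a point of $E'\cap D$. The paper states the kernel-finiteness step more tersely, but your justification (preimages of finite sets under a homomorphism with finite kernel are finite) is exactly the argument it relies on.
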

\begin{proof}
Let $D\subseteq G$ be the $\Sigma_m$ set in $G$ generated by the  non-degenerated sequence $(\textbf g_k)_{k\in\N}=(g_{k,1},...,g_{k,m})_{k\in\N}$ in $G^m$ (i.e. $D=\{g_\alpha\,|\,\alpha\in\N^{(m)}\}$). We will show that $D\cap E'\neq\emptyset$.\\   
By our assumption on $\phi_1,...,\phi_d$, for each $j\in\{1,...,m\}$, the sequences $(\phi_1(g_{k,j}))_{k\in\N}$,....,$(\phi_d(g_{k,j}))_{k\in\N}$ are non-degenerated and essentially distinct. Thus, the set $D'=\{(\phi_1(g_\alpha),...,\phi_d(g_\alpha))\,|\,\alpha\in\N^{(m)}\}$ is a $\tilde\Sigma_m$ set in $G^d$. Noting that $D'\cap E\neq\emptyset$, we obtain $D\cap E'\neq\emptyset$.
\end{proof}

So far we have been focusing on the massivity and ubiquity of \textit{general} $\tilde\Sigma_\ell^*$ sets. However the "dynamical" $\tilde\Sigma_\ell^*$ sets  $R_\epsilon(A_0,...,A_\ell)$, are even more prevalent in $G^\ell$. For example, assuming for convenience that $G=\Z$, one can show that  the sets of the form $R_\epsilon(A_0,...,A_\ell)$ have an ample presence in "polynomial" subsets of $\Z^\ell$. This is illustrated by the following polynomial extension  of \cref{4.ZExample} (which is proved in a companion paper \cite{BerZel-StronglyMixingPET}).
\begin{thm}\label{5.3.StronglyMixingPet}
Let $\ell\in\N$ and let $p_1,...,p_\ell\in \Z[x]$ be non-constant polynomials such that for any distinct $i,j\in\{1,...,\ell\}$, $\deg(p_j-p_i)>0$. There exists an $m\in\N$ such that for any strongly mixing system  $(X,\mathcal A,\mu, T)$, 
any $\epsilon>0$ and any $A_0,...,A_\ell\in\mathcal A$, the set 
\begin{equation}\label{5.3.SetOfPolyReturns}
R_\epsilon^{p_1,...,p_\ell}(A_0,...,A_\ell)=\{n\in\Z\,|\,|\mu(A_0\cap T^{p_1(n)}A_1\cap\cdots\cap T^{p_\ell(n)}A_\ell)-\prod_{j=0}^\ell\mu(A_j)|<\epsilon\}
\end{equation}
is $\Sigma_m^*$.
\end{thm}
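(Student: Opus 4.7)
The plan is to adapt the PET (polynomial ergodic theorem) induction scheme of Bergelson to the $\mathcal R$-limit and $\Sigma_m^*$ framework developed in this paper. Assign to each admissible family $(p_1,\ldots,p_\ell)$ a lexicographic complexity, for instance the pair $(d,w)$ with $d=\max_j\deg p_j$ and $w=\#\{j:\deg p_j=d\}$. The base of the induction is the linear case $d=1$: when $p_j(n)=a_jn$ for pairwise distinct non-zero $a_j\in\Z$, \cref{4.ZExample} immediately supplies $m=\ell$.

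For the inductive step I would argue contrapositively. Assume $R_\epsilon^{p_1,\ldots,p_\ell}(A_0,\ldots,A_\ell)$ is not $\Sigma_m^*$ for a prescribed large $m$, and pick a $\Sigma_m$ set $D=\{g_\alpha:\alpha\in\N^{(m)}\}$, generated by a non-degenerate sequence $(g_k)_{k\in\N}$, that avoids it. Using the coupling machinery of Section~2 together with \cref{1.RBolzanoWierstrass}, extract an infinite $S\subseteq\N$ and a coupling $\lambda$ of $\ell+1$ copies of $(X,\mathcal A,\mu)$ such that
\[
\rlim{\alpha\in S^{(m)}}\mu\bigl(A_0\cap T^{p_1(g_\alpha)}A_1\cap\cdots\cap T^{p_\ell(g_\alpha)}A_\ell\bigr)=\lambda(A_0\times\cdots\times A_\ell),
\]
with $|\lambda(A_0\times\cdots\times A_\ell)-\prod_j\mu(A_j)|\ge\epsilon$. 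A Ramsey-theoretic van der Corput step, modeled on the decomposition in \cref{2.DecomposingRlimLemma} and implemented by fixing $k_0\in S$ and comparing this $\mathcal R$-limit with the iterated limit over $\{k_0\}\cup\alpha'$ with $\alpha'\in (S\cap(k_0,\infty))^{(m-1)}$, then reduces the question to a derived polynomial family of the form $\{p_j(n+h)-p_i(n)\}_{(i,j)}$ with $h$ a partial sum of the $g_k$. Taylor-expansion shows that this derived family has strictly smaller PET complexity while still satisfying the hypothesis $\deg(\tilde p_j-\tilde p_i)>0$ for $i\neq j$.

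Applying the inductive hypothesis to the derived family (with a suitably larger value $m'$) gives that its associated return sets are $\Sigma_{m'}^*$. Feeding this back into the coupling produced above, and invoking \cref{2.PropTheIndependentjoining} (together with an iterated application of \cref{2.MainResult}-style limit identifications), forces $\lambda=\bigotimes_{j=0}^\ell\mu$, contradicting the strict inequality. Iterating the reduction yields an $m$ bounded by $\ell+c(p_1,\ldots,p_\ell)$, where $c$ counts the number of PET reductions needed to descend to the linear case.

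The main obstacle is executing the van der Corput step in the absence of C\'esaro averages: the classical argument relies on Hilbert-space averaging along F\o lner sequences, whereas here one must extract a ``derived'' $\Sigma_{m'}$ configuration purely from Ramsey-theoretic refinements of $S$. The key leverage is that every $\Sigma_m$ sum $g_\alpha=g_{k_1}+\cdots+g_{k_m}$ can be written as $g_{k_1}+g_{\alpha'}$ with $\alpha'\in(S\cap(k_1,\infty))^{(m-1)}$, so that the iterated-$\mathcal R$-limit formalism described in \cref{2.ItteratedLimits} and \cref{1.ItteratedLimitsRemark} translates polynomial differencing into manipulations of $\mathcal R$-limits. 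Carefully tracking how the complexity of $(p_1,\ldots,p_\ell)$ and the parameter $m$ co-evolve through these manipulations is the combinatorial heart of the argument; this is what the companion paper \cite{BerZel-StronglyMixingPET} carries out in detail.
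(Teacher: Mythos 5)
Your outline has a genuine gap, and it is worth noting at the outset that the paper itself contains no proof of \cref{5.3.StronglyMixingPet}: the theorem is stated with a pointer to the companion paper \cite{BerZel-StronglyMixingPET}, so the only question is whether your sketch is self-contained, and it is not. The decisive step -- a van der Corput-type reduction carried out purely with Ramsey refinements and $\mathcal R$-limits, with no C\'esaro averaging available -- is exactly the step you describe as ``the main obstacle'' and then defer to the companion paper; nothing in your proposal actually executes it. Moreover, if one tries to push your plan through with the Section 2 machinery you invoke, there is a concrete obstruction: \cref{2.MainResult} and \cref{2.PropTheIndependentjoining} rely on the fact that along a $\tilde\Sigma_m$ configuration the acting transformation splits as a composition, $T_{g_\alpha}=T_{g_{k_1,1}}\circ\cdots\circ T_{g_{k_m,m}}$, so that freezing the first index produces an honest sequence of measure preserving maps acting on a fixed coupling, to which the mixing hypothesis and \cref{2.PropTheIndependentjoining} can be applied. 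For a polynomial, $T^{p_j(g_{k_1}+g_{\alpha'})}$ does not factor into a piece depending on $k_1$ and a piece depending on $\alpha'$; the cross terms in the expansion of $p_j(g_{k_1}+g_{\alpha'})$ are precisely what makes a PET induction with a new $\mathcal R$-limit van der Corput lemma necessary, and that lemma is not formulated, let alone proved, in your proposal.

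There are further unaddressed points in the reduction itself. The derived family $\{p_j(\cdot+h)-p_i(\cdot)\}$ has coefficients depending on the parameter $h=g_{k_1}$, so the inductive hypothesis must be applied uniformly over an infinite family of polynomial systems while preserving the nondegeneracy condition $\deg(\tilde p_j-\tilde p_i)>0$ (which can fail naively when leading terms coincide); and the theorem asserts that $m$ depends only on $p_1,\dots,p_\ell$, so your ``suitably larger $m'$'' bookkeeping is exactly where uniformity in the system and in the sets $A_0,\dots,A_\ell$ must be established, not assumed. Finally, the base case via \cref{4.ZExample} covers the fixed linear family $a_1n,\dots,a_\ell n$, but the PET descent passes through families with parameter-dependent linear coefficients, so the base case needs a correspondingly stronger formulation. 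The overall strategy is the natural one and is presumably close in spirit to what the companion paper does, but as submitted it is an outline whose central analytic content is missing.
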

The following proposition shows that, in general, $\tilde\Sigma_\ell^*$ sets, unlike the sets of the form $R_\epsilon(A_0,...,A_\ell)$, can be disjoint from the polynomial sets $H_{p_1,...,p_\ell}=\{(p_1(n),...,p_\ell(n))\,|\,n\in\Z\}$, where $p_1,...,p_\ell\in\Z[x]$.
\begin{prop}\label{5.3.PolynomialPathsAreSmall}
Let $\ell\in\N$ and let $p_1,...,p_\ell\in \Z[x]$ be non-constant polynomials such that  for any distinct  $i,j\in\{1,...,\ell\}$, $\deg(p_j-p_i)>0$. Suppose that $\deg(p_1)>1$. Then, for any $m\geq 2$, $H_{p_1,...,p_\ell}$ contains no $\tilde\Sigma_m$ sets. Equivalently, $\Z^\ell\setminus H_{p_1,...,p_\ell}$ is a $\tilde\Sigma_m^*$ set for each $m\geq 2$.
\end{prop}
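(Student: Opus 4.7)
My plan is to project everything onto the first coordinate and exploit the sparseness of $p_1(\Z)$ against the additive structure built into a $\tilde\Sigma_m$ set. The key elementary input is the following \emph{polynomial gap lemma}: if $p\in\Z[x]$ satisfies $\deg(p)=d\ge 2$ and $c\in\Z\setminus\{0\}$, then the Diophantine equation $p(x)-p(y)=c$ has only finitely many integer solutions $(x,y)$. Indeed, by the mean value theorem $p(x)-p(y)=(x-y)\,p'(\xi)$ for some $\xi$ between $x$ and $y$, and for $|x|,|y|>B$ with $B$ large enough one has $|p'(\xi)|>|c|$, forcing $|x-y|<1$; hence $x=y$, giving $c=0$, a contradiction. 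So any solution has $\min(|x|,|y|)\le B$, leaving at most finitely many.

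Assume, for contradiction, that some $\tilde\Sigma_m$ set with $m\ge 2$ is contained in $H_{p_1,\ldots,p_\ell}$, witnessed by sequences $(\mathbf g_k^{(j)})_{k\in\N}=(g_{k,1}^{(j)},\ldots,g_{k,\ell}^{(j)})_{k\in\N}$, $j=1,\ldots,m$, satisfying the non-degeneracy and within-sequence growing-apart requirements of \cref{1.SigmaTildeDefn}. Projecting onto the first coordinate, for every $k_1<\cdots<k_m$ there exists an integer $n(k_1,\ldots,k_m)$ with
\begin{equation*}
\sum_{j=1}^m g_{k_j,1}^{(j)} \;=\; p_1\!\bigl(n(k_1,\ldots,k_m)\bigr).
\end{equation*}
In particular, the $\Sigma_m$ set $\bigl\{\sum_{j=1}^m g_{k_j,1}^{(j)}\,:\,k_1<\cdots<k_m\bigr\}$ is contained in $p_1(\Z)$.

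Next I would apply a \emph{difference trick}. Because $(g_{k,1}^{(1)})_{k\in\N}$ tends to infinity in $\Z$, it takes infinitely many distinct values, so one may pick $k_1<k_1'$ with $g_{k_1,1}^{(1)}\neq g_{k_1',1}^{(1)}$ and then pick $k_2<k_3<\cdots<k_{m-1}$ all larger than $k_1'$ (this middle block is empty when $m=2$). Set
\begin{equation*}
C:=g_{k_1,1}^{(1)}+\sum_{j=2}^{m-1}g_{k_j,1}^{(j)},\qquad C':=g_{k_1',1}^{(1)}+\sum_{j=2}^{m-1}g_{k_j,1}^{(j)},
\end{equation*}
so that $c:=C-C'=g_{k_1,1}^{(1)}-g_{k_1',1}^{(1)}$ is a nonzero integer. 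For each $k_m>k_{m-1}$ there are integers $N_{k_m},N_{k_m}'$ with $p_1(N_{k_m})=C+g_{k_m,1}^{(m)}$ and $p_1(N_{k_m}')=C'+g_{k_m,1}^{(m)}$; subtracting,
\begin{equation*}
p_1(N_{k_m})-p_1(N_{k_m}')\;=\;c\;\neq\;0.
\end{equation*}
Since $|g_{k_m,1}^{(m)}|\to\infty$, both $|N_{k_m}|$ and $|N_{k_m}'|$ tend to infinity, so as $k_m$ varies we obtain infinitely many distinct integer solutions of $p_1(x)-p_1(y)=c$, contradicting the polynomial gap lemma.

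There is no real obstacle beyond verifying the gap lemma, which is elementary; the rest is bookkeeping with the indices. It is worth noting that the hypotheses $\deg(p_j-p_i)>0$ for $i\ne j$ play no role in this direction and the argument only uses $\deg(p_1)\ge 2$, in fact seeing only the first coordinate of $H_{p_1,\ldots,p_\ell}$.
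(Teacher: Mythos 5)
Your overall route is the same as the paper's: project onto the first coordinate, reduce to showing that $p_1(\Z)$ contains no $\Sigma_m$ set, and then run a shift/difference argument to produce infinitely many integer pairs $(x,y)$ with $p_1(x)-p_1(y)$ equal to a fixed nonzero constant. The index bookkeeping in your difference trick is correct. The genuine gap is in your proof of the ``polynomial gap lemma''. The mean value theorem gives you \emph{some} $\xi$ strictly between $x$ and $y$, and from $|x|,|y|>B$ you may conclude $|p'(\xi)|>|c|$ only when $x$ and $y$ have the \emph{same} sign; if $x>B$ and $y<-B$, the interval $(y,x)$ contains all critical points of $p$, and in fact if $p(x)-p(y)=c$ with $|x-y|$ large then the MVT point satisfies $p'(\xi)=c/(x-y)$, which is tiny—so the inequality you assert is exactly what fails. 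This opposite-sign regime (which occurs only for even $\deg p$) is not vacuous: for $p(t)=t^2+t$ one has $p(n)-p(-n-1)=0$ for every $n$, so the two branches of an even-degree polynomial can interleave with bounded (here zero) differences, and excluding a constant \emph{nonzero} difference needs a real argument (for instance: along infinitely many solutions $x+y$ must be bounded, pass to a subsequence with $x+y=k$ constant, deduce the polynomial identity $p(x)-p(k-x)\equiv c$, and apply the substitution $x\mapsto k-x$ twice to force $2c=0$). In your application this case cannot be waved away: nothing you have said prevents the pairs $(N_{k_m},N'_{k_m})$ from having opposite signs for every $k_m$. A smaller point in the same lemma: from $\min(|x|,|y|)\le B$ you should still note that each admissible value of the smaller variable admits at most $\deg p$ partners, which is what makes the solution set finite.

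This is precisely where the paper's argument diverges from yours: instead of two shifted copies it uses three ($I$, $a+I$, $b+I$) and pigeonholes over the two branches $\{p_1(n)\,|\,n\in\N\}$ and $\{p_1(-n)\,|\,n\in\N\}$, so that two of the shifted copies land on a \emph{common} branch; on a single branch the same-sign gap argument (the part your MVT step does establish) already gives the contradiction. So your proposal becomes correct either by repairing the lemma with an argument covering the opposite-sign case as above, or by importing the paper's three-shift pigeonhole device. Your closing remark that the hypotheses $\deg(p_j-p_i)>0$ are not used and that only the first coordinate matters is consistent with the paper's own proof.
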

\begin{proof}
Since the projection onto the first coordinate of any $\tilde\Sigma_m$ set $E\subseteq \Z^\ell$ is a $\Sigma_m$ set in $\Z$, it suffices to show that the set $\{p_1(n)\,|\,n\in\Z\}$ contains no $\Sigma_m$ sets. Suppose for contradiction that $\{p_1(n)\,|\,n\in\Z\}$ contains a $\Sigma_m$ set 
$$D=\{n_{k_1}^{(1)}+\cdots+n_{k_m}^{(m)}\,|\,k_1<\cdots<k_m\},$$
where $(n_k^{(1)})_{k\in\N}$,...,$(n_k^{(m)})_{k\in\N}$ are non-degenerated sequences in $\Z$.\\ 
Choose $t_1,t_2,t_3\in\N$ to be such that $n^{(1)}_{t_1}<n^{(1)}_{t_2}<n^{(1)}_{t_3}$ and let $$I=\{n_{t_1}^{(1)}+n_{k_2}^{(2)}+\cdots+n_{k_m}^{(m)}\,|\,\max\{t_1,t_2,t_3\}<k_2<\cdots<k_m\}.$$
Clearly $I$ is an infinite subset of $D$. So, letting $a=n_{t_2}^{(1)}-n_{t_1}^{(1)}$ and  $b=n_{t_3}^{(1)}-n_{t_1}^{(1)}$, we have $a+I\subseteq D$ and $b+I\subseteq D$.\\
Let $(n_k)_{k\in\N}$ be an enumeration of the elements of $I$. One can find an increasing sequence $(k_j)_{j\in\N}$ for which at least two of the sets $\{n_{k_j}\,|\,j\in\N\}$, $\{a+n_{k_j}\,|\,j\in\N\}$ and $\{b+n_{k_j}\,|\,j\in\N\}$ are  contained in at least one of the sets $\{p_1(n)\,|\,n\in\N\}$ and $\{p_1(-n)\,|\,n\in\N\}$. We will assume that $\{a+n_{k_j}\,|\,j\in\N\}$ and $\{b+n_{k_j}\,|\,j\in\N\}$ are contained in $\{p_1(n)\,|\,n\in \N\}$ (the other cases can be handled similarly). 
It follows that there exist infinitely many  pairs $(n,m)\in\N\times \N$ such that $p_1(n)-p_1(m)=b-a$. Since $b>a$, this contradicts the fact that $\deg(p_1)>1$.
\end{proof}


\section{Multiple recurrence for mildly and weakly mixing systems via $\mathcal R$-limits}
As we saw above, $\mathcal R$-limits are adequate for characterizing strong mixing and obtaining higher order mixing properties. In this section, we will show that $\mathcal R$-limits can be also useful in dealing with mildly and weakly mixing systems. In particular, we will obtain analogues of \cref{3.MainResult} for midly and weakly mixing systems.

\subsection{Mildly mixing systems}
    In this subsection we will deal with mildly mixing systems (introduced in \Cref{6.1.MildlyMixingDefn} below) from the perspective of $\mathcal R$-limits. The notion of mild mixing has multiple equivalent forms (see \cite{walters1972someMildMixing}, \cite{walters1982MildMixingEquivalence} and \cite{FurWeissMildMixing}) and plays a fundamental role in IP ergodic theory, including various refinements of the classical Szemer{\'e}di theorem (see \cite{berMcCuIPPolySzemeredi} and \cite{FKIPSzemerediLong}).  The multiple recurrence theorems for mildly mixing systems (see \cite{FBook} and \cite{FKIPSzemerediLong}) utilize the notion of IP-limit which we will presently define.  We will then establish a connection between   IP-limits and   $\mathcal R$-limits and, finally, prove an analogue of \cref{3.MainResult} for mildly mixing actions.
\begin{defn}
(Cf. \cite[Definitions 1.1 and 1.3]{FKIPSzemerediLong}) Let $(X,d)$ be a compact metric  space and let $(x_\alpha)_{\alpha\in\mathcal F}$ be an $\mathcal F$-sequence in $X$. A set $\mathcal F^{(1)}\subseteq \mathcal F$ is an \text{\rm{IP}}-ring if there exists a sequence $(\alpha_k)_{k\in\N}$ in $\mathcal F$ with $\alpha_k<\alpha_{k+1}$ for each $k\in\N$ for which 
$$\mathcal F^{(1)}=\{\bigcup_{j\in\alpha}\alpha_j\,|\,\alpha\in\mathcal F\}.$$
For any  \text{\rm{IP}}-ring $\mathcal F^{(1)}$, we write 
$$\mathop{\text{\rm{IP-lim}}}_{\alpha\in\mathcal F^{(1)}}x_\alpha=x$$
if for every $\epsilon>0$, there exists an $\alpha_0\in\mathcal F^{(1)}$ such that for any $\alpha\in\mathcal F^{(1)}$ with $\alpha>\alpha_0$, 
$$d(x_\alpha,x)<\epsilon.$$
\end{defn}
It follows from a result of Hindman \cite{HIPPartitionRegular}  that if $(x_\alpha)_{\alpha\in\mathcal F}$ is an $\mathcal F$-sequence in a compact metric space $X$, then for any \text{\rm{IP}}-ring $\mathcal F^{(1)}\subseteq \mathcal F$ one can always find an $x\in  X$ and an \text{\rm{IP}}-ring $\mathcal F^{(2)}\subseteq \mathcal F^{(1)}$ such that 
\begin{equation}\label{6.1.IPAlwaysExists}
\mathop{\text{\rm{IP-lim}}}_{\alpha\in\mathcal F^{(2)}}x_\alpha=x
\end{equation}
(see \cite[Theorem 8.14]{FBook}).
In particular,  for any countable abelian group $(G,+)$, any  sequence $(g_k)_{k\in\N}$ in $G$ and any probability measure preserving system $(X,\mathcal A,\mu, (T_g)_{g\in G})$, there exists an \text{\rm{IP}}-ring $\mathcal F^{(1)}$ for which 
$$\mathop{\text{\rm{IP-lim}}}_{\alpha\in\mathcal F^{(1)}}T_{g_\alpha}$$
exists in the weak operator topology of $L^2(\mu)$. This implies (and is equivalent to) the fact that for any $A_0,A_1\in\mathcal A$,
$$\mathop{\text{\rm{IP-lim}}}_{\alpha\in\mathcal F^{(1)}}\mu(A_0\cap T_{g_\alpha} A_1)$$
exists.
\begin{thm}\label{6.1.EquivalenceOfConvergenceIP}
Let $(X,d)$ be a compact metric space, let $(G,+)$ be a countable abelian group, let $(x_g)_{g\in G}$ be a sequence in $X$, let $x_0\in X$  and let $(g_k)_{k\in\N}$ be a sequence in $G$. The following statements are equivalent:
\begin{enumerate}[(i)]
    \item For any \text{\rm{IP}}-ring $\mathcal F^{(1)}\subseteq \mathcal F$ for which $\mathop{\text{\rm{IP-lim}}}_{\alpha\in\mathcal F^{(1)}}x_{g_\alpha}$ exists, one has
    \begin{equation}\label{6.1.IPLimitForSubRing}
    \mathop{\text{\rm{IP-lim}}}_{\alpha\in\mathcal F^{(1)}}x_{g_\alpha}=x_0.
    \end{equation}
    \item For any \text{\rm{IP}}-ring $\mathcal F^{(1)}\subseteq \mathcal F$ there exist  an $m\in\N$ and a sequence $(h_{k,1},...,h_{k,m})_{k\in\N}$ in $G^m$ such that $\{h_\alpha\,|\,\alpha\in\N^{(m)}\}\subseteq \{g_\alpha\,|\,\alpha\in\mathcal F^{(1)}\}$ and 
    \begin{equation}\label{6.1.RLimitForSubSequence}
    \rlim{\alpha\in\N^{(m)}}x_{h_\alpha}=x_0.
    \end{equation}
\end{enumerate}
\end{thm}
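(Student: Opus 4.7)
The plan is to prove the two implications separately, using Hindman's theorem (as recorded in~\eqref{6.1.IPAlwaysExists}) as the main bridge between IP-limits on sub-IP-rings and $\mathcal R$-limits on $\N^{(m)}$-sequences generated by their generating sequences.

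For (i)$\implies$(ii), fix an arbitrary IP-ring $\mathcal F^{(1)}$.  By~\eqref{6.1.IPAlwaysExists} there is a sub-IP-ring $\mathcal F^{(2)}\subseteq\mathcal F^{(1)}$ along which $\mathop{\text{\rm{IP-lim}}}_{\alpha\in\mathcal F^{(2)}}x_{g_\alpha}$ exists; hypothesis (i) forces this IP-limit to equal $x_0$.  Let $(\beta_k)_{k\in\N}$ be an increasing sequence in $\mathcal F$ generating $\mathcal F^{(2)}$, so that in particular $\beta_k\in\mathcal F^{(2)}\subseteq\mathcal F^{(1)}$ for every $k\in\N$.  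Taking $m=1$ and setting $h_{k,1}=g_{\beta_k}$, I obtain for every $\alpha=\{k\}\in\N^{(1)}$ that $h_\alpha=g_{\beta_k}\in\{g_\gamma\,|\,\gamma\in\mathcal F^{(1)}\}$; and since $\min\beta_k\to\infty$ in $\N$ makes $\beta_k$ eventually dominate any prescribed $\gamma_0\in\mathcal F^{(2)}$, the IP-limit along $\mathcal F^{(2)}$ yields $x_{g_{\beta_k}}\to x_0$, that is, $\rlim{\alpha\in\N^{(1)}}x_{h_\alpha}=x_0$, establishing~(ii).

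For (ii)$\implies$(i), suppose for contradiction that $\mathcal F^{(1)}$ is an IP-ring with $y=\mathop{\text{\rm{IP-lim}}}_{\alpha\in\mathcal F^{(1)}}x_{g_\alpha}$ existing and $\delta:=d(x_0,y)>0$.  Set $\epsilon=\delta/3$.  By the IP-limit, choose $\alpha_0\in\mathcal F^{(1)}$ so that $d(x_{g_\gamma},y)<\epsilon$ for every $\gamma\in\mathcal F^{(1)}$ with $\gamma>\alpha_0$.  If $(\alpha_k)_{k\in\N}$ is an increasing sequence in $\mathcal F$ generating $\mathcal F^{(1)}$, pick $k_0$ with $\min\alpha_{k_0}>\max\alpha_0$ and let $\mathcal F^{(2)}$ be the sub-IP-ring generated by $(\alpha_{k_0+j-1})_{j\in\N}$; every $\gamma\in\mathcal F^{(2)}$ then satisfies $\gamma>\alpha_0$.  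Apply (ii) to $\mathcal F^{(2)}$ to obtain $m\in\N$ and a sequence $(h_{k,1},\ldots,h_{k,m})_{k\in\N}$ in $G^m$ with $\{h_\alpha\,|\,\alpha\in\N^{(m)}\}\subseteq\{g_\gamma\,|\,\gamma\in\mathcal F^{(2)}\}$ and $\rlim{\alpha\in\N^{(m)}}x_{h_\alpha}=x_0$.  Then for any $\alpha\in\N^{(m)}$ with $\min\alpha$ sufficiently large, the $\mathcal R$-limit gives $d(x_{h_\alpha},x_0)<\epsilon$; the inclusion $h_\alpha=g_\gamma$ for some $\gamma\in\mathcal F^{(2)}$ (hence $\gamma>\alpha_0$) gives $d(x_{h_\alpha},y)<\epsilon$; and the triangle inequality forces $\delta=d(x_0,y)<2\epsilon<\delta$, a contradiction.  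Hence $y=x_0$, proving~(i).

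The genuine technical point lies in the second direction: in order to compare the IP-limit value $y$ with the $\mathcal R$-limit value $x_0$ along the same elements $h_\alpha$, one needs the IP-limit's tail estimate to hold uniformly along \emph{every} element of the sub-IP-ring to which (ii) is applied, which is exactly what passing from $\mathcal F^{(1)}$ to $\mathcal F^{(2)}$ (all of whose elements dominate $\alpha_0$) achieves.  The opposite direction is comparatively routine, being a direct consequence of Hindman's theorem combined with~(i).
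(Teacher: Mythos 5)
Your proof is correct and follows essentially the same route as the paper's: for (i)$\implies$(ii) one passes (via compactness/Hindman) to a sub-IP-ring where the IP-limit exists, which must be $x_0$, and then takes $m=1$ along the generators; for (ii)$\implies$(i) one applies (ii) to the tail IP-ring $\{\alpha\in\mathcal F^{(1)}\,|\,\alpha>\alpha_0\}$ and derives a contradiction with the assumed IP-limit value $y\neq x_0$. Your version merely makes explicit (tail generators, triangle inequality with $\epsilon=\delta/3$) what the paper leaves implicit.
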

\begin{proof}
(i)$\implies$(ii): Let $\mathcal F^{(1)}$ be an \text{\rm{IP}}-ring. Since $X$ is compact, we can assume (by  passing, if needed, to a sub \text{\rm{IP}}-ring) that $\mathop{\text{\rm{IP-lim}}}_{\alpha\in\mathcal F^{(1)}}x_{g_\alpha}$ exists. Thus, by (i), \eqref{6.1.IPLimitForSubRing} holds. It follows from the definition of an IP-limit that there exists a sequence $(h_k)_{k\in\N}$ in $G$ such that $\{h_k\,|\,k\in\N\}\subseteq \{g_\alpha\,|\,\alpha\in\mathcal F^{(1)}\}$ and $\lim_{k\rightarrow\infty}x_{h_k}=x_0$. This completes the proof of (i)$\implies$(ii).\\
(ii)$\implies$(i): Let $\mathcal F^{(1)}$ be an \text{\rm{IP}}-ring for which $\mathop{\text{\rm{IP-lim}}}_{\alpha\in\mathcal F^{(1)}}x_{g_\alpha}=y$ for some $y\in X$. Suppose for contradiction that there exists an $\epsilon>0$ for which $d(y,x_0)>\epsilon$.  By the definition of an IP-limit, there exists $\alpha_0\in\mathcal F$ such that for any $\alpha\in\mathcal F^{(1)}$  with $\alpha>\alpha_0$, $d(x_{g_\alpha},x_0)>\epsilon$. Since $\{\alpha\in\mathcal F^{(1)}\,|\,\alpha>\alpha_0\}$ is an \text{\rm{IP}}-ring,  it follows from (ii) that there exist an $m\in\N$ and a sequence $(h_{k,1},...,h_{k,m})_{k\in\N}$ in $G^m$ such that $\{h_\alpha\,|\,\alpha\in\N^{(m)}\}\subseteq \{g_\alpha\,|\,\alpha\in\mathcal F^{(1)}\text{ and }\alpha>\alpha_0\}$ and    $\rlim{\alpha\in\N^{(m)}}x_{h_\alpha}=x_0$. In particular, there exists an $h\in  \{g_\alpha\,|\,\alpha\in\mathcal F^{(1)}\text{ and }\alpha>\alpha_0\}$ for which $d(x_h,x_0)<\epsilon$, a contradiction.
\end{proof}
\begin{rem}
\cref{6.1.EquivalenceOfConvergenceIP} shows that IP-limits can be attained via $\mathcal R$-limits. The following example demonstrates that this is not the case the other way around.  Let $G=\Z$, let $X=\{0,1\}$, let  $m\in\N$, and  consider the $\Sigma_m$ set   $E=\{3^{k_1}+\cdots+3^{k_m}\,|\,k_1<\cdots<k_m\}$. The set $E$ is comprised of all the elements of  $3\N$ whose base 3 expansion has exactly $m$ non-zero entries, all of which are 1. It follows that  there are no $a,b,c\in E$ for which $a+b=c$. This, in turn, implies that $E$ contains no IP sets and hence   $\Z\setminus E$ is an IP$^*$ set. Let $(n_k)_{k\in\N}$ be a sequence in $\Z$ and let $\mathcal F^{(1)}\subseteq\mathcal F$ be an \text{\rm{IP}}-ring for which  $\mathop{\text{\rm{IP-lim}}}_{\alpha\in\mathcal  F^{(1)}}\mathbbm 1_E(n_\alpha)$ exists. Since $0\not\in E$ and $\Z\setminus E$ is IP$^*$, one has $\mathop{\text{\rm{IP-lim}}}_{\alpha\in\mathcal  F^{(1)}}\mathbbm 1_E(n_\alpha)=0$. On the other hand, since for any $k_1<\cdots<k_m$, $\mathbbm 1_E(3^{k_1}+\cdots+3^{k_m})=1$, one has that for any infinite set $S\subseteq \N$,  
$$\rlim{\{k_1,...,k_m\}\in S^{(m)}}\mathbbm 1_E(3^{k_1}+\cdots+3^{k_m})=1.$$
\end{rem}
\begin{defn}\label{6.1.MildlyMixingDefn}
Let $(G,+)$ be a countable abelian group and let $(X,\mathcal A,\mu, (T_g)_{g\in G})$ be a  measure preserving system. $(T_g)_{g\in G}$ is mildly mixing if for any sequence $(g_k)_{k\in\N}$ in $G$ for which $\lim_{\alpha\rightarrow\infty}g_\alpha=\infty$, there exists an \text{\rm{IP}}-ring $\mathcal F^{(1)}$ such that for any $f\in L^2(\mu)$,
\begin{equation}\label{6.1.IPLimitMixing}
\mathop{\text{\rm{IP-lim}}}_{\alpha\in\mathcal F^{(1)}}T_{g_\alpha}f=\int_Xf\text{d}\mu
\end{equation}
weakly.
\end{defn}
We are now ready to state and prove the main theorem of this subsection. It can be viewed as an analogue of \cref{3.MainResult} for mildly mixing actions. We remind the reader that a sequence of measure preserving transformations $(T_k)_{k\in\N}$ of a probability space $(X,\mathcal A,\mu)$ has the mixing property if for every $A_0,A_1\in\mathcal A$, $\lim_{k\rightarrow\infty}\mu(A_0\cap T_k^{-1}A_1)=\mu(A_0)\mu(A_1)$. 
\begin{thm}\label{6.1.GlobalMildlyMixing}
Let $\ell\in\N$, let $(G,+)$ be a countable abelian group and let $(X,\mathcal A,\mu,(T_g)_{g\in G})$ be a measure preserving system. The following statements are equivalent:
\begin{enumerate}[(i)]
    \item $(T_g)_{g\in G}$ is mildly mixing.
    \item For any \text{\rm{$\tilde{\text{IP}}$}} set $E\subseteq G^\ell$  and any $m\in\N$, there exist non-degenerated and essentially distinct sequences $(\textbf g_k^{(j)})_{k\in\N}=(g_{k,1}^{(j)},...,g_{k,m}^{(j)})_{k\in\N}$, $j\in\{1,...,\ell\}$,
    in $G^m$ with the properties:
    \begin{enumerate}[(a)]
    \item $\{(g_\alpha^{(1)},...,g_\alpha^{(\ell)})\,|\,\alpha\in\N^{(m)}\}\subseteq E$. 
    \item For any  $t\in\{1,...,m\}$ and any $j\in\{1,...,\ell\}$, $(T_{g_{k,t}^{(j)}})_{k\in\N}$ has the mixing property.
    \item For any $t$ and any  $i\neq j$, $(T_{g_{k,t}^{(j)}-g_{k,t}^{(i)}})_{k\in\N}$  has the mixing property.
    \end{enumerate}
    \item For any \text{\rm{$\tilde{\text{IP}}$}} set $E\subseteq G^\ell$, there  exist an $m\in\N$ and non-degenerated and essentially distinct sequences $(\textbf g_k^{(1)})_{k\in\N},...,(\textbf g_k^{(\ell)})_{k\in\N}$ in $G^m$ with $\{(g_\alpha^{(1)},...,g_{\alpha}^{(\ell)})\,|\,\alpha\in\N^{(m)}\}\subseteq E$ and such that for any $A_0,...,A_\ell\in\mathcal A$,
    \begin{equation}\label{6.1.RMixingInMain}
    \rlim{\alpha\in \N^{(m)}}\mu(A_0\cap T_{g^{(1)}_\alpha} A_1\cap\cdots\cap T_{g^{(\ell)}_\alpha}A_\ell)=\prod_{j=0}^\ell\mu(A_j).
    \end{equation}
    \item Given sequences $(g^{(1)}_k)_{k\in\N}$,...,$(g^{(\ell)}_k)_{k\in\N}$ in $G$ such 
    that for any $j\in\{1,...,\ell\}$, $\lim_{\alpha\rightarrow\infty}g^{(j)}_\alpha=\infty$ 
    and for any $i\neq j$, 
    $\lim_{\alpha\rightarrow\infty}g_\alpha^{(j)}-g_\alpha^{(i)}=\infty$ (and so 
    $E=\{(g_\alpha^{(1)},...,g_\alpha^{(\ell)})\,|\,\alpha\in\mathcal F\}$
    is an \text{\rm{$\tilde{\text{IP}}$}} set), there exists an \text{\rm{IP}}-ring $\mathcal F^{(1)}$ such that for any $A_0,...,A_\ell \in\mathcal A$,
    \begin{equation}\label{6.1.IPMultiCorrelation}
    \mathop{\text{\rm{IP-lim}}}_{\alpha\in\mathcal F^{(1)}}\mu(A_0\cap T_{g_\alpha^{(1)}}A_1\cap\cdots\cap T_{g_\alpha^{(\ell)}}A_\ell)=\prod_{j=1}^\ell \mu(A_j).
    \end{equation}
    \item For any $A_0,...,A_\ell\in\mathcal A$ and any $\epsilon>0$, the set 
    $$R_\epsilon(A_0,...,A_\ell)=\{(g_1,...,g_\ell)\in G^\ell\,|\,|\mu(A_0\cap T_{g_1}A_1\cap\cdots \cap T_{g_\ell}A_\ell)-\prod_{j=0}^\ell \mu( A_j)|<\epsilon\}$$ 
    is an \text{\rm{$\tilde{\text{IP}}\rm{^*}$}} set. 
\end{enumerate}  
\end{thm}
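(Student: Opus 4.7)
I will prove the equivalences via the cyclic chain
$\mathrm{(i)} \Rightarrow \mathrm{(ii)} \Rightarrow \mathrm{(iii)} \Rightarrow \mathrm{(v)} \Rightarrow \mathrm{(iv)} \Rightarrow \mathrm{(i)}$.
The underlying engine is the same as in the strongly mixing case (\cref{3.MainResult}): extract sequences with the one-step mixing property along each coordinate, then feed them into \cref{2.MainResult}. The essential difference here is that mild mixing only supplies such sequences as IP-limits along IP-rings, so the extraction procedure must be filtered through Hindman-type arguments.

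For $\mathrm{(i)} \Rightarrow \mathrm{(ii)}$, write $E = \{(h_\alpha^{(1)},\dots,h_\alpha^{(\ell)}) : \alpha \in \mathcal F\}$, generated by sequences $(h_k^{(j)})$. Apply mild mixing to each $(h_k^{(j)})$ and to each difference $(h_k^{(j)} - h_k^{(i)})$, then iteratively pass to sub-IP-rings (using that IP-limits are preserved on sub-IP-rings) to obtain a single IP-ring $\mathcal F^*$, generated by an increasing sequence $(\alpha_k)_{k\in\N}$ in $\mathcal F$, along which every relevant IP-limit in the weak topology of $L^2(\mu)$ equals $\int f\, d\mu$. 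Specialising $\mathcal F^*$ to its singletons $\{\alpha_k\}$ shows that the sequences $(T_{h_{\alpha_k}^{(j)}})_k$ and $(T_{h_{\alpha_k}^{(j)} - h_{\alpha_k}^{(i)}})_k$ have the mixing property. Finally, enumerate $\N$ via the bijection $(k,t) \mapsto (k-1)m + t$ and set $g_{k,t}^{(j)} := h^{(j)}_{\alpha_{(k-1)m+t}}$; the pairwise disjointness of the $\alpha_s$ realises $(g_\alpha^{(1)},\dots,g_\alpha^{(\ell)})$ as $(h_\gamma^{(1)},\dots,h_\gamma^{(\ell)})$ for the common element $\gamma = \bigcup_{t=1}^m \alpha_{(k_t-1)m+t} \in \mathcal F$, placing it in $E$. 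Properties (b) and (c) descend to the subsequences indexed by each residue class modulo $m$, and the tendency conditions on $E$ force the new sequences to be non-degenerated and essentially distinct.

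For $\mathrm{(ii)} \Rightarrow \mathrm{(iii)}$, apply (ii) with $m = \ell$; the resulting $\ell$ sequences in $G^\ell$ satisfy the hypotheses of \cref{2.MainResult}, producing an infinite $S \subseteq \N$ over which the $\mathcal R$-limit equals the product. Reindexing $\N$ through the enumeration of $S$ turns this into an $\mathcal R$-limit over $\N^{(\ell)}$ while preserving the containment in $E$. The implication $\mathrm{(iii)} \Rightarrow \mathrm{(v)}$ is immediate: for any $\tilde{\mathrm{IP}}$ set $E$ and any $\epsilon > 0$, (iii) produces some $\alpha$ with $(g_\alpha^{(1)},\dots,g_\alpha^{(\ell)}) \in R_\epsilon(A_0,\dots,A_\ell) \cap E$. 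For $\mathrm{(v)} \Rightarrow \mathrm{(iv)}$, Hindman's theorem (in the form of \eqref{6.1.IPAlwaysExists}) combined with a diagonalisation over a countable dense subfamily of $\mathcal A^{\ell+1}$ produces a sub-IP-ring $\mathcal F^{(1)}$ along which every IP-limit $L(A_0,\dots,A_\ell)$ of $\mu(A_0 \cap T_{g_\alpha^{(1)}}A_1 \cap \cdots \cap T_{g_\alpha^{(\ell)}}A_\ell)$ exists. If some $L$ differed from $\prod \mu(A_j)$, then setting $\delta = |L - \prod \mu(A_j)|/2$ and passing to the tail of $\mathcal F^{(1)}$ beyond a suitable $\alpha_0$ would produce an $\tilde{\mathrm{IP}}$ set disjoint from $R_\delta(A_0,\dots,A_\ell)$, contradicting (v). For $\mathrm{(iv)} \Rightarrow \mathrm{(i)}$, given a single sequence $(g_k)$ with $\lim_{\alpha\to\infty} g_\alpha = \infty$, augment it by $\ell - 1$ auxiliary sequences chosen inductively so all IP-sums and pairwise difference IP-sums escape to infinity, then apply (iv) with $A_2 = \cdots = A_\ell = X$; the product collapses to $\mu(A_0)\mu(A_1)$, and linearity together with $L^2$-density upgrades the resulting IP-limit to the weak convergence $T_{g_\alpha} f \to \int f\, d\mu$.

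\textbf{Main obstacle.} The technically heaviest step is $\mathrm{(i)} \Rightarrow \mathrm{(ii)}$: producing a single IP-ring simultaneously witnessing the mixing property along the $\ell + \binom{\ell}{2}$ sequences built from the generators of $E$, and then arranging the index-block structure so that the $\tilde\Sigma_m$-type sums remain inside $E$, is where the IP-machinery interacts most intricately with mild mixing. The step $\mathrm{(v)} \Rightarrow \mathrm{(iv)}$ is a lighter version of the same theme, requiring only the existence (not the explicit construction) of a common IP-ring.
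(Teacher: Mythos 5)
Your proposal is correct and runs on the same engine as the paper's proof: (i)$\implies$(ii) by applying mild mixing to the generators of $E$ and their pairwise differences, refining to a common IP-ring, and reading off one-step mixing along its generators; (ii)$\implies$(iii) by feeding the resulting sequences into \cref{2.MainResult} and re-indexing $S$ as $\N$. Where you genuinely diverge is in how the remaining equivalences are organized. The paper proves (iii)$\implies$(iv) by invoking \cref{6.1.EquivalenceOfConvergenceIP} (the $\mathcal R$-limit/IP-limit bridge), gets (iv)$\implies$(v) for free, and closes the cycle with (v)$\implies$(i); you instead get (iii)$\implies$(v) for free, prove (v)$\implies$(iv) by a direct Hindman-plus-diagonalization argument followed by a tail contradiction (so \cref{6.1.EquivalenceOfConvergenceIP} is never used), and close with (iv)$\implies$(i). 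Your (v)$\implies$(iv) is fine: the tail $\{(g^{(1)}_\alpha,\dots,g^{(\ell)}_\alpha)\,|\,\alpha\in\mathcal F^{(1)},\ \alpha>\alpha_0\}$ is again an $\tilde{\text{IP}}$ set, so a limit $\neq\prod_{j=0}^\ell\mu(A_j)$ would violate the $\tilde{\text{IP}}^*$ property of $R_\delta$. The trade-off appears in the closing step: the paper's (v)$\implies$(i) only needs subsequences of the given data (it splits the generators of an IP-ring into $\ell$ residue classes to pad the given sequence into an $\ell$-parameter $\tilde{\text{IP}}$ set), whereas your (iv)$\implies$(i) must manufacture $\ell-1$ fresh companion sequences whose IP-sums, pairwise difference IP-sums, and differences with $g_\alpha$ all escape to infinity. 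This is the one place where "chosen inductively" deserves two more lines: fix an exhaustion $K_1\subseteq K_2\subseteq\cdots$ of $G$ by finite sets and choose the $N$-th terms of the auxiliary sequences one after another, each avoiding the finite set of values that would place some signed combination (built from $g_1,\dots,g_N$ and all previously chosen terms) inside $K_N$; since every relevant IP-sum with largest index $N$ is ``(newest term) $+$ (element of a finite set of earlier partial sums)'', this greedy avoidance, which uses only that $G$ is infinite, gives all the required escape conditions. Finally, your block re-indexing $g^{(j)}_{k,t}:=h^{(j)}_{\alpha_{(k-1)m+t}}$ in (i)$\implies$(ii) is a cosmetic variant of the paper's constant-vector choice $(h^{(j)}_{\alpha_k},\dots,h^{(j)}_{\alpha_k})$; both land in $E$ because the blocks are pairwise disjoint. (Note also that the displayed product in (iv) should read $\prod_{j=0}^\ell\mu(A_j)$; your argument correctly uses this form when collapsing $A_2=\cdots=A_\ell=X$.)
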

\begin{proof}
(i)$\implies$(ii): Let $m\in\N$, let $E\subseteq G^\ell$ be an \text{\rm{$\tilde{\text{IP}}$}} set and let the sequences $(h_k^{(1)})_{k\in\N}$,...,$(h_k^{(\ell)})_{k\in\N}$ in $G$ be such that $E=\{(h_\alpha^{(1)},...,h_\alpha^{(\ell)})\,|\,\alpha\in\mathcal F\}$. By the stipulation made in  \Cref{5.1.CommonSenseSequence}, for any \text{\rm{IP}}-ring $\mathcal F^{(1)}\subseteq \mathcal F$, the set $\{(h^{(1)}_\alpha,...,h^{(\ell)}_\alpha)\,|\,\alpha\in\mathcal F^{(1)}\}$ is again an \text{\rm{$\tilde{\text{IP}}$}} set. Pick $\mathcal F^{(1)}$ to be an \text{\rm{IP}}-ring such  that for any $A_0,A_1\in\mathcal A$ and any $i,j\in\{1,...,\ell\}$,
\begin{equation}\label{6.1.IPExpressions1}
\mathop{\text{\rm{IP-lim}}}_{\alpha\in\mathcal F^{(1)}}\mu(A_0\cap T_{h^{(j)}_\alpha}A_1)\text{ and if }i\neq j,\,\mathop{\text{\rm{IP-lim}}}_{\alpha\in\mathcal F^{(1)}}\mu(A_0\cap T_{h^{(j)}_\alpha-h^{(i)}_\alpha}A_1)
\end{equation}
exist. Let $(\alpha_k)_{k\in\N}$ be the sequence in $\mathcal  F$ generating $\mathcal F^{(1)}$ (so, in particular, $\alpha_k<\alpha_{k+1}$ for each $k\in\N$).
It follows from (i) that each of the limits appearing in \eqref{6.1.IPExpressions1} equals $\mu(A_0)\mu(A_1)$ (otherwise, we would have a contradiction with formula \eqref{6.1.IPLimitMixing}). Thus,  for any $A_0,A_1\in\mathcal A$ and any $i,j\in\{1,...,\ell\}$,
$$\lim_{k\rightarrow\infty}\mu(A_0\cap T_{h^{(j)}_{\alpha_k}}A_1)=\mu(A_0)\mu(A_1)\text{ and if }i\neq j,\,\lim_{k\rightarrow\infty}\mu(A_0\cap T_{h^{(j)}_{\alpha_k}-h^{(i)}_{\alpha_k}}A_1)=\mu(A_0)\mu(A_1).$$
For each $j\in\{1,...,\ell\}$, let  $(\textbf g^{(j)}_k)_{k\in\N}=(\underbrace{h_{\alpha_k}^{(j)},...,h_{\alpha_k}^{(j)})}_{m\text{ times}}$. It is now easy to check that the sequences $(\textbf g^{(1)}_k)_{k\in\N}$,...,$(\textbf g^{(\ell)}_k)_{k\in\N}$ are non-degenerated, essentially distinct, and satisfy (a)-(c), completing the proof of (i)$\implies$(ii).\\
(ii)$\implies$(iii): This follows from \cref{2.MainResult}.\\
(iii)$\implies$(iv):  We will prove (iv)  by applying \cref{6.1.EquivalenceOfConvergenceIP} to the $G^\ell$-sequence  
$$x_{(g_1,...,g_\ell)}=\mu(A_0\cap T_{g_1}A_1\cap\cdots\cap T_{g_\ell}A_\ell),\,(g_1,...,g_\ell)\in G^\ell$$
and the sequence $(g_k^{(1)},...,g_k^{(\ell)})_{k\in\N}$ in $G^\ell$.\\ 
Note that for any IP-ring $\mathcal F^{(2)}$, $\{(g_\alpha^{(1)},...,g_\alpha^{(\ell)})\,|\,\alpha\in\mathcal F^{(2)}\}$ is an \text{\rm{$\tilde{\text{IP}}$}} set.
By (iii),  there exist an $m\in\N$ and  non-degenerated and essentially distinct sequences $(\textbf h_k^{(1)})_{k\in\N}$,...,$(\textbf h_k^{(\ell)})_{k\in\N}$ in $G^m$ with 
$$\{(h_\alpha^{(1)},...,h_\alpha^{(\ell)})\,|\,\alpha\in\mathcal \N^{(m)}\}\subseteq \{(g_\alpha^{(1)},...,g_\alpha^{(\ell)})\,|\,\alpha\in\mathcal F^{(2)}\}$$
for which \eqref{6.1.RMixingInMain} holds. Letting $\mathcal F^{(1)}$ be an \text{\rm{IP}}-ring  for which the left-hand side of \eqref{6.1.IPMultiCorrelation} exists for any $A_0,...,A_\ell\in\mathcal A$, we obtain by \cref{6.1.EquivalenceOfConvergenceIP} that \eqref{6.1.IPMultiCorrelation} holds.\\
(iv)$\implies$(v): This implication follows from the definition of \text{\rm{$\tilde{\text{IP}}\rm{^*}$}}.\\
(v)$\implies$(i): Let $(g_k)_{k\in\N}$ be a sequence in $G$ with the property that $\lim_{\alpha\rightarrow\infty}g_\alpha=\infty$. It suffices to show that for some \text{\rm{IP}}-ring $\mathcal F^{(1)}$ and any $A_0,A_1\in\mathcal A$, 
$$\mathop{\text{\rm{IP-lim}}}_{\alpha\in\mathcal F^{(1)}}\mu(A_0\cap T_{g_\alpha}A_1)=\mu(A_0)\mu(A_1).$$
By \eqref{6.1.IPAlwaysExists}, there exists an \text{\rm{IP}}-ring $\mathcal F^{(1)}\subseteq \mathcal F$ such that for any $A_0,A_1\in\mathcal A$,
\begin{equation}\label{6.1.MildMixingExitenceOfIPLimit}
\mathop{\text{\rm{IP-lim}}}_{\alpha\in\mathcal F^{(1)}}\mu(A_0\cap T_{g_\alpha}A_1)
\end{equation}
exists. Let $(\gamma_k)_{k\in\N}$ be a sequence in $\mathcal F^{(1)}$ with $\gamma_k<\gamma_{k+1}$ for each $k\in\N$ and such that the sequences $(h_k^{(j)})_{k\in\N}=(g_{\gamma_{j+\ell k}})_{k\in\N}$, $j\in\{1,...,\ell\}$, in $G$  satisfy (a) for any $j\in\{1,...,\ell\}$, $\lim_{\alpha\rightarrow\infty}h_\alpha^{(j)}=\infty$ and (b) for any $i\neq  j$, $\lim_{\alpha\rightarrow\infty}h_\alpha^{(j)}-h_\alpha^{(i)}=\infty$. For each $\alpha_0\in\mathcal F$, let 
$$E_{\alpha_0}=\{(h^{(1)}_\alpha,...,h^{(\ell)}_{\alpha})\,|\,\alpha\in\mathcal F\text{ and }\alpha>\alpha_0\}.$$
Since $E_{\alpha_0}$ is an \text{\rm{$\tilde{\text{IP}}$}} set, (v) implies that for any $\alpha_0\in\mathcal F$, any $A_0,A_1\in\mathcal A$ and any $\epsilon>0$, 
$$E_{\alpha_0}\cap R_\epsilon(A_0,A_1,X,...,X)\neq\emptyset.$$
Thus, for any $\alpha_0\in\mathcal F$, there exists an $\alpha>\alpha_0$ such that $h^{(1)}_\alpha\in R_\epsilon(A_0,A_1)$. Note that
$$\lim_{\alpha\rightarrow\infty}\min(\bigcup_{k\in\alpha}\gamma_{1+\ell k})=\infty.$$
It follows that for any $\beta_0\in\mathcal F$, there is an   $\alpha\in\mathcal F$ such  that  $h_\alpha^{(1)}\in R_\epsilon(A_0,A_1)$ and such that  $\beta=\bigcup_{k\in\alpha}\gamma_{1+\ell k}\in\mathcal F^{(1)}$ satisfies  $\beta>\beta_0$.  But $g_\beta=g_{(\bigcup_{k\in\alpha}\gamma_{1+\ell k})}=h_\alpha^{(1)}$, so
$$|\mu(A_0\cap T_{g_\beta}A_1)-\mu(A_0)\mu(A_1)|<\epsilon.$$
Since $\epsilon$ was arbitrary,  for any $A_0,A_1\in\mathcal A$, 
$$\mathop{\text{\rm{IP-lim}}}_{\alpha\in\mathcal F^{(1)}}\mu(A_0\cap T_{g_\alpha}A_1)=\mu(A_0)\mu(A_1),$$
which completes the proof.
\end{proof}
\begin{rem}
We saw in Section 4 that the  versatility of $\mathcal R$-limits allows one to obtain from the \textit{multiparameter} \cref{3.MainResult} some interesting results of \textit{diagonal} nature. Similarly, one can obtain diagonal results from \cref{6.1.GlobalMildlyMixing}. For example, let $G=\Z$ and assume that $(X,\mathcal A,\mu,T)$ is a mildly mixing system. Then, by \cref{6.1.GlobalMildlyMixing}, (iv), for any strictly increasing sequence $(n_k)_{k\in\N}$ in $\Z$, any non-zero and distinct integers $a_1,...,a_\ell$ and any \text{\rm{IP}}-ring $\mathcal F^{(1)}\subseteq \mathcal F$, there exists an \text{\rm{IP}}-ring $\mathcal F^{(2)}\subseteq \mathcal F^{(1)}$ such that for any $A_0,...,A_\ell\in\mathcal A$,
\begin{equation}\label{6.1.SzemerediExpresion}
\mathop{\textit{IP-lim}}_{\alpha\in\mathcal F^{(2)}}\mu(A_0\cap T^{a_1n_\alpha}A_1\cap \cdots\cap T^{a_\ell n_\alpha}A_\ell)=\prod_{j=0}^\ell \mu(A_j).
\end{equation}
(Cf. \cite[Theorem 9.27]{FBook} and  \cite[Theorem 5.4]{FKIPSzemerediLong}.)
\end{rem}

\subsection{Weakly mixing systems}
This subsection is devoted to weakly mixing systems (which were introduced in Subsection 5.2) and has a similar structure to that of Subsection 6.1. We will first establish a technical lemma which connects $\mathcal R$-limits with C{\'e}saro convergence. We will then prove an analogue of \cref{3.MainResult} (see \cref{6.2.GlobalWeaklyMixing}  below) for weakly mixing systems and derive a corollary which has  diagonal nature.    
\begin{lem}\label{6.2.LimitEquivalenceWM}
Let $(G,+)$ be a countable abelian group, let $(X,d)$ be a compact metric space, let $(x_g)_{g\in G}$ be a sequence in $X$, let $x_0\in X$, let $(F_k)_{k\in\N}$ be a F{\o}lner sequence in $G$ and let $E\subseteq G$ be such that $\overline d_{(F_k)}(E)>0$. The following statements are equivalent:
\begin{enumerate}[(i)]
    \item 
    \begin{equation}\label{6.2.DensityLimit}
        \lim_{k\rightarrow\infty}\frac{1}{|F_k|}\sum_{g\in F_k}\mathbbm 1_E(g)d(x_g,x_0)=0.
    \end{equation}
    \item For any $D\subseteq  E$ with $\overline d_{(F_k)}(D)>0$, there exist an $m\in\N$ and a sequence $(g_{k,1},...,g_{k,m})_{k\in\N}$ in $G^m$ for which $\{g_\alpha\,|\,\alpha\in\N^{(m)}\}\subseteq D$ and 
    \begin{equation}\label{6.2.RlimitDensityResult}
    \rlim{\alpha\in\N^{(m)}}x_{g_\alpha}=x_0.
    \end{equation}
\end{enumerate}
\end{lem}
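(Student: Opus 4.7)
The plan is to exploit the compactness of $X$ (so $d(\cdot,x_0)$ is bounded on the orbit, by some $M<\infty$) together with a Chebyshev-style bound: the uniform Ces\`aro vanishing in (i) forces the set where $d(x_g,x_0)$ stays bounded away from zero to have density zero within $E$.

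For (i)$\Longrightarrow$(ii), I would take $m=1$, since a $\Sigma_1$ set is just the range $\{g_k\}_{k\in\N}$ of a sequence in $G$ with $g_k\to\infty$, and $\rlim{\alpha\in\N^{(1)}}x_{g_\alpha}$ reduces to the ordinary sequential limit $\lim_{k\to\infty}x_{g_k}$. Given $D\subseteq E$ with $\overline d_{(F_k)}(D)>0$, for each $n\in\N$ the ``bad'' set $B_n=\{g\in E:d(x_g,x_0)\geq 1/n\}$ satisfies $\overline d_{(F_k)}(B_n)=0$, because $\tfrac{1}{n}\cdot\tfrac{|F_k\cap B_n|}{|F_k|}\leq\tfrac{1}{|F_k|}\sum_{g\in F_k}\mathbbm 1_E(g)\,d(x_g,x_0)\to 0$. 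Hence $D_n:=D\setminus B_n$ retains the density of $D$ and in particular is infinite. Enumerating $G=\{h_1,h_2,\ldots\}$, pick $g_n\in D_n\setminus\{h_1,\ldots,h_{n-1}\}$ for each $n$; this is possible by the infinitude of $D_n$. Then $g_n\to\infty$ in $G$, $\{g_n\}_{n\in\N}\subseteq D$, and $d(x_{g_n},x_0)<1/n$, so $\rlim{\alpha\in\N^{(1)}}x_{g_\alpha}=x_0$, as required.

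For (ii)$\Longrightarrow$(i), I argue by contrapositive. If (i) fails, there exist $\delta>0$ and a subsequence $(F_{k_j})_{j\in\N}$ with $\tfrac{1}{|F_{k_j}|}\sum_{g\in F_{k_j}}\mathbbm 1_E(g)\,d(x_g,x_0)\geq\delta$. Splitting $E=E^{<}\sqcup E^{\geq}$ at the threshold $\delta/2$ (where $E^{\geq}=\{g\in E:d(x_g,x_0)\geq\delta/2\}$), I bound the Ces\`aro average above by $\delta/2+M\cdot|F_{k_j}\cap E^{\geq}|/|F_{k_j}|$, which forces $\overline d_{(F_k)}(E^{\geq})\geq\delta/(2M)>0$. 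Now apply (ii) with $D:=E^{\geq}$ to obtain $m\in\N$ and a sequence with $\{g_\alpha:\alpha\in\N^{(m)}\}\subseteq E^{\geq}$ and $\rlim{\alpha\in\N^{(m)}}x_{g_\alpha}=x_0$. Taking $\epsilon=\delta/4$ in the definition of the $\mathcal{R}$-limit, there exists $\alpha_0\in\N^{(m)}$ with $d(x_{g_\alpha},x_0)<\delta/4$ for all $\alpha\in\N^{(m)}$ satisfying $\min\alpha>\max\alpha_0$; but such $g_\alpha$ lie in $E^{\geq}$, so $d(x_{g_\alpha},x_0)\geq\delta/2$, a contradiction.

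There is essentially no deep obstacle here, and in particular no need to invoke Proposition~\ref{5.2.FiniteSumsInPositiveDensitySets}: the $m=1$ case already witnesses (ii), and boundedness of $d$ together with the Chebyshev split handles the reverse direction. The only points requiring care are ensuring the diagonal construction genuinely produces $g_n\to\infty$ in $G$ (handled by enumerating $G$) and choosing the threshold in the splitting so that the resulting positive-density subset of $E$ is automatically incompatible with the $\mathcal{R}$-limit equalling $x_0$.
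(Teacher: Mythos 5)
Your proposal is correct and follows essentially the same route as the paper: in (i)$\implies$(ii) the paper also takes $m=1$ and simply selects $g_k\in D$ with $d(x_{g_k},x_0)<1/k$ (your Chebyshev bound on the sets $B_n$ and the enumeration forcing $g_n\to\infty$ are harmless extras, since (ii) does not require non-degeneracy), and in (ii)$\implies$(i) the paper likewise splits $E$ at a threshold, uses boundedness of $d$ on the compact space to show the super-threshold set $D_\epsilon$ has positive upper density if (i) fails, and then contradicts the $\mathcal R$-limit obtained from (ii) applied to that set.
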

\begin{proof}
(i)$\implies$(ii):  Let $D\subseteq E$ be such that $\overline d_{(F_k)}(D)>0$. It follows from  \eqref{6.2.DensityLimit} that 
$$\lim_{k\rightarrow\infty}\frac{1}{|F_k|}\sum_{g\in F_k}\mathbbm 1_D(g)d(x_g,x_0)=0.$$
Let $\epsilon>0$. There exist infinitely many $g\in D$ such that $d(x_g,x_0)<\epsilon$ (otherwise, we would have 
$\limsup_{k\rightarrow\infty}\frac{1}{|F_k|}\sum_{g\in F_k}\mathbbm 1_D(g)d(x_g,x_0)>0$). Thus, for each $k\in\N$, there is a $g_k\in D$ with $d(x_{g_k},x_0)<\frac{1}{k}$. It follows now that
$$\rlim{\{k\}\in\N^{(1)}}x_{g_{\{k\}}}=\lim_{k\rightarrow\infty}x_{g_k}=x_0.$$
(ii)$\implies$(i): It suffices to show that for any given $\epsilon>0$, $\overline d_{(F_k)}(D_\epsilon)=0$, where 
$$D_\epsilon=\{g\in E\,|\,d(x_g,x_0)>\epsilon\}.$$ 
(This will imply that for each $\epsilon>0$, $$\limsup_{k\rightarrow\infty}\frac{1}{|F_k|}\sum_{g\in F_k}\mathbbm 1_E(g)d(x_g,x_0)\leq\limsup_{k\rightarrow\infty}(\frac{1}{|F_k|}\sum_{g\in F_k}\epsilon\mathbbm 1_{E\setminus D_\epsilon}(g)+\frac{1}{|F_k|}\sum_{g\in F_k}\mathbbm 1_{D_\epsilon}(g)d(x_g,x_0))\leq \epsilon.)$$ Fix $\epsilon>0$ and suppose for contradiction that $\overline d_{(F_k)}(D_\epsilon)>0$. It follows from (ii) that there exist an $m\in\N$ and a  sequence $(g_{k,1},...,g_{k,m})_{k\in\N}$ in $G^m$ with $\{g_\alpha\,|\,\alpha\in\N^{(m)}\}\subseteq D_\epsilon$ for which \eqref{6.2.RlimitDensityResult} holds. In particular, for some $g\in D_\epsilon$, $d(x_g,x_0)<\epsilon$, which gives us the desired contradiction.
\end{proof}

We collect in the following proposition some equivalent definitions of weak mixing which will be needed for the proof of \cref{6.2.GlobalWeaklyMixing} below. The proof is totally analogous to the classical case $G=\Z$ and is omitted.
\begin{prop}\label{6.2.EquivalentFormsOFWM}
Let $(G,+)$ be a countable abelian group and let  $(X,\mathcal A,\mu, (T_g)_{g\in G})$ be a measure preserving system. The following statements are equivalent:
\begin{enumerate}[(i)]
    \item $(T_g)_{g\in G}$ is weakly mixing.
    \item For any ergodic probability measure preserving system $(Y,\mathcal B,\nu, (S_g)_{g\in G})$,  the system 
    $$(X\times Y,\mathcal A\otimes \mathcal B,\mu\otimes\nu, (T_g\times S_g)_{g\in G})$$
    is ergodic.
    \item For any F{\o}lner sequence $(F_k)_{k\in\N}$ in $G$ there exists a set $B\subseteq G$ with $\overline d_{(F_k)}(B)=0$ such that for any $A_0,A_1\in\mathcal A$,
    $$\lim_{g\rightarrow\infty,\,g\not\in B}\mu(A_0\cap T_gA_1)=\mu(A_0)\mu(A_1).$$
    \item There exists a sequence $(g_k)_{k\in\N}$ in $G$ with $\lim_{k\rightarrow\infty}g_k=\infty$ such that for any $A_0,A_1\in\mathcal A$,
    $$\lim_{k\rightarrow\infty}\mu(A_0\cap T_{g_k}A_1)=\mu(A_0)\mu(A_1).$$
\end{enumerate}
\end{prop}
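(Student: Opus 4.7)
The plan is to prove $(i) \Rightarrow (iii) \Rightarrow (iv) \Rightarrow (i)$ and, separately, $(i) \Leftrightarrow (ii)$, mimicking the classical argument for $G = \Z$ with Cesàro averages replaced by Følner averages and unit-modulus eigenvalues replaced by characters of the Pontryagin dual $\hat G$.

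For $(i) \Rightarrow (iii)$, the engine is the Koopman--von Neumann lemma for amenable groups: a bounded non-negative function $(a_g)_{g \in G}$ with $\frac{1}{|F_k|} \sum_{g \in F_k} a_g \to 0$ tends to zero at infinity outside some $B \subseteq G$ with $\overline d_{(F_k)}(B) = 0$. I would apply this to $a_g = |\mu(A_0 \cap T_g A_1) - \mu(A_0)\mu(A_1)|^2$: the mean ergodic theorem for the action $(T_g \times T_g)_{g \in G}$, which is ergodic by (i), applied to the zero-mean function $(\mathbf{1}_{A_1} - \mu(A_1)) \otimes (\mathbf{1}_{A_1} - \mu(A_1))$, forces the Cesàro average of $a_g$ to zero. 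A diagonalization over a countable dense subalgebra of $\mathcal A$, together with a standard $L^2$-approximation, then yields a single $B$ with $\overline d_{(F_k)}(B) = 0$ that works uniformly in $A_0, A_1 \in \mathcal A$.

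The implication $(iii) \Rightarrow (iv)$ is immediate: fix any Følner sequence, take the associated $B$ from (iii), and enumerate a sequence in $G \setminus B$ tending to infinity. For $(iv) \Rightarrow (i)$, I would reuse verbatim the Bochner-theorem argument from the proof of $(iv) \Rightarrow (i)$ in \cref{3.MainResult}: for $f \in L^2(\mu)$ with $\int f \, d\mu = 0$, the spectral measure $\rho_f$ on $\hat G$ defined by $\int \overline f\, T_g f \, d\mu = \int_{\hat G} \chi(g) \, d\rho_f(\chi)$ must be continuous, because any atom of $\rho_f$ at some $\chi_0 \in \hat G$ would produce a non-zero eigenfunction $\phi$ with $T_g \phi = \chi_0(g) \phi$ and $\int \phi \, d\mu = 0$, whence $\int \overline \phi \,T_{g_k} \phi \, d\mu = \chi_0(g_k) \|\phi\|_2^2$ would have constant modulus $\|\phi\|_2^2 > 0$ and could not converge to $0$, contradicting (iv). Continuity of the spectrum on the orthogonal complement of the constants in $L^2(\mu)$ is the standard spectral characterization of weak mixing.

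The equivalence $(i) \Leftrightarrow (ii)$ is handled separately via the same spectral characterization. The direction $(ii) \Rightarrow (i)$ is immediate upon taking $(Y, \mathcal B, \nu, S_g) = (X, \mathcal A, \mu, T_g)$, which is ergodic since (i) already makes $(T_g)$ ergodic. For $(i) \Rightarrow (ii)$, given any ergodic $(S_g)$, a non-constant $(T_g \times S_g)$-invariant $h \in L^2(\mu \otimes \nu)$ would, via the spectral decomposition of $(S_g)$ applied in the $Y$-variable, decompose into fibers that are common eigenfunctions of $(T_g)$ for individual characters of $\hat G$; continuity of the spectrum of $(T_g)$ forces every such fiber to be a constant multiple of $\mathbf{1}_X$, and ergodicity of $(S_g)$ then forces $h$ itself to be constant. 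The principal technical obstacle is the uniformization step in $(i) \Rightarrow (iii)$: since zero Følner density is not preserved under countable unions, producing a single exceptional $B$ valid for \emph{all} pairs $(A_0, A_1)$ requires the standard nested-tails diagonal construction, but this is entirely parallel to the $\Z$-case, which is why the authors omit the details.
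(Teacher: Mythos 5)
The paper gives no proof of this proposition (it is declared ``totally analogous to the classical case $G=\Z$'' and omitted), so your proposal can only be measured against that classical argument — and your cycle (i)$\Rightarrow$(iii)$\Rightarrow$(iv)$\Rightarrow$(i) is indeed that argument and is sound. The level sets $\{g\in G : |\mu(A_0\cap T_gA_1)-\mu(A_0)\mu(A_1)|\geq\epsilon\}$ have $(F_k)$-upper density zero by the mean ergodic theorem for the ergodic diagonal action applied to $f\otimes f$ with $f=\mathbbm 1_{A_1}-\mu(A_1)$; the nested-tails construction does produce a single exceptional set $B$ for a countable dense family of pairs (using that each $F_k$ is finite, this works for an arbitrary F{\o}lner sequence), and $L^2$-approximation handles general $A_0,A_1$. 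For (iv)$\Rightarrow$(i), an atom of $\rho_f$ at $\chi_0$ does yield an eigenfunction $\phi\neq 0$ with $\int_X\phi\,\text{d}\mu=0$ (automatic for $\chi_0$ nontrivial; for $\chi_0$ trivial because the projection of the zero-mean $f$ onto the invariant functions is orthogonal to the constants), and $|\int_X\overline\phi\,T_{g_k}\phi\,\text{d}\mu|=\|\phi\|_2^2>0$ contradicts (iv) — but only after you upgrade (iv) from indicators to arbitrary $L^2$ functions by bilinearity and approximation, a step you should state explicitly.

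There is one genuine flaw: your proof of (ii)$\Rightarrow$(i) is circular as written. You take $(Y,\mathcal B,\nu,(S_g)_{g\in G})=(X,\mathcal A,\mu,(T_g)_{g\in G})$ and justify its ergodicity ``since (i) already makes $(T_g)_{g\in G}$ ergodic'' — but (i) is precisely what is being proved, and it is not available. The standard repair is to apply (ii) first to the trivial one-point system (which is ergodic) to conclude that $(T_g)_{g\in G}$ itself is ergodic, and only then apply (ii) with $S_g=T_g$ to obtain ergodicity of $(T_g\times T_g)_{g\in G}$, which is the paper's definition of weak mixing. Relatedly, your sketch of (i)$\Rightarrow$(ii) (the ``fibers'' of an invariant function being common eigenfunctions via the spectral decomposition of $(S_g)_{g\in G}$ in the $Y$-variable) is too vague to verify as stated: the fibers of an invariant $h$ are not literally eigenfunctions, and the classical proofs either run the Hilbert--Schmidt kernel argument (the operator with kernel $h$ intertwines the actions, and its nonzero eigenspaces are finite-dimensional $(T_g)$-invariant subspaces, hence spanned by eigenfunctions, which weak mixing excludes), or, more in the spirit of your own scheme, use the already-established (iii) together with the mean ergodic theorem for the ergodic action $(S_g)_{g\in G}$: averaging $\mu(A_0\cap T_gA_1)\,\nu(B_0\cap S_gB_1)$ over $F_k$ gives $\mu(A_0)\mu(A_1)\nu(B_0)\nu(B_1)$, which yields ergodicity of the product. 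With these two repairs your proposal matches the intended classical proof.
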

\begin{rem}\label{6.2.ProductOfWeaklyMixingIsWeaklyMixing}
It follows from (ii) that for any two weakly mixing systems  $(X,\mathcal A,\mu, (T_g)_{g\in G})$ and $(Y,\mathcal B,\nu, (S_g)_{g\in G})$, $(T_g\times S_g)$ is again weakly mixing. 
\end{rem}

\begin{thm}\label{6.2.GlobalWeaklyMixing}
Let $\ell\in\N$, let $(G,+)$ be a countable abelian group and let $(X,\mathcal A,\mu,(T_g)_{g\in G})$ be a measure preserving system. The following statements are equivalent:
\begin{enumerate}[(i)]
    \item $(T_g)_{g\in G}$ is weakly mixing.
    \item For any F{\o}lner sequence $(F_k)_{k\in\N}$ in $G^\ell$, any set $E\subseteq G^\ell$ with $\overline d_{(F_k)}(E)>0$ and any $m\in\N$, there exist non-degenerated and essentially distinct sequences $(\textbf g_k^{(j)})_{k\in\N}=(g_{k,1}^{(j)},...,g_{k,m}^{(j)})_{k\in\N}$, $j\in\{1,...,\ell\}$,
    in $G^m$
    with the properties:   
    \begin{enumerate}[(a)]
    \item $\{(g_\alpha^{(1)},...,g_\alpha^{(\ell)})\,|\,\alpha\in\N^{(m)}\}\subseteq E$, 
    \item For any  $t\in\{1,...,m\}$ and any $j\in\{1,...,\ell\}$, $(T_{g_{k,t}^{(j)}})_{k\in\N}$ has the mixing property and  
    \item For any $t$ and any  $i\neq j$, $(T_{g_{k,t}^{(j)}-g_{k,t}^{(i)}})_{k\in\N}$  has the mixing property.
    \end{enumerate}
    \item For any F{\o}lner sequence $(F_k)_{k\in\N}$ in $G^\ell$ and any set $E\subseteq G^\ell$ with $\overline  d_{(F_k)}(E)>0$, there exist an $m\in\N$ and sequences $(\textbf g_k^{(1)})_{k\in\N},...,(\textbf g_k^{(\ell)})_{k\in\N}$
    in $G^m$ with $\{(g_\alpha^{(1)},...,g_{\alpha}^{(\ell)})\,|\,\alpha\in\N^{(m)}\}\subseteq E$ and such that for any $A_0,...,A_\ell\in\mathcal A$,
    $$\rlim{\alpha\in \N^{(m)}}\mu(A_0\cap T_{g^{(1)}_\alpha} A_1\cap\cdots\cap T_{g^{(\ell)}_\alpha}A_\ell)=\prod_{j=0}^\ell\mu(A_j).$$
    \item For any $A_0,...,A_\ell\in\mathcal A$ and any $\epsilon>0$, the set
    $$R_\epsilon(A_0,...,A_\ell)=\{(g_1,...,g_\ell)\in G^\ell\,|\,|\mu(A_0\cap T_{g_1}A_1\cap\cdots \cap T_{g_\ell}A_\ell)-\prod_{j=0}^\ell \mu( A_j)|<\epsilon\}$$
    has uniform density one. 
\end{enumerate}  
\end{thm}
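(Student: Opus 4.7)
The plan is to establish the four statements cyclically as (i)$\implies$(ii)$\implies$(iii)$\implies$(iv)$\implies$(i), mirroring the structure of the proof of \cref{6.1.GlobalMildlyMixing} but with the uniform-density-one machinery of Subsection 5.2 replacing the IP-limit machinery. The hard step will be (i)$\implies$(ii): simultaneously forcing the mixing property along all $m\ell$ single-coordinate base sequences and all $m\ell(\ell-1)$ pairwise-difference sequences, while keeping the induced $\tilde\Sigma_m$-set inside $E$.

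For (i)$\implies$(ii), I would first apply \cref{5.2.FiniteSumsInPositiveDensitySets} to obtain non-degenerated and essentially distinct base sequences $(\textbf g_k^{(j)})_{k\in\N}=(g^{(j)}_{k,1},\dots,g^{(j)}_{k,m})_{k\in\N}$, $j\in\{1,\dots,\ell\}$, whose generated $\tilde\Sigma_m$-set already sits inside $E$. What remains is to refine them so that (b) and (c) hold. The idea is to fix a countable family $\{A^{(n)}\}_{n\in\N}\subseteq\mathcal A$ dense in the symmetric-difference pseudo-metric (available by the standing regularity assumption) and to run a diagonalization. For each fixed triple $(t,j,n)$ and each pair $A^{(n_1)},A^{(n_2)}$, weak mixing together with \cref{6.2.EquivalentFormsOFWM}, (iii), guarantees that $R_{1/n}(A^{(n_1)},A^{(n_2)})$ has uniform density one in $G$, so all but a density-zero subset of indices $k$ will satisfy $g^{(j)}_{k,t}\in R_{1/n}(A^{(n_1)},A^{(n_2)})$. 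Since essential distinctness forces each difference $g^{(j)}_{k,t}-g^{(i)}_{k,t}$ to tend to infinity, the analogous density-one statement applies to the difference sequences as well. A standard diagonal extraction then produces a subsequence $(k_p)_{p\in\N}$ of $\N$ along which every single-coordinate sequence and every pairwise-difference sequence becomes genuinely mixing. Passing to the restricted base sequences $(g^{(j)}_{k_p,t})_{p\in\N}$ preserves the inclusion of the generated $\tilde\Sigma_m$-set in $E$, so all of (a), (b) and (c) hold.

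The implication (ii)$\implies$(iii) is an immediate invocation of \cref{2.MainResult}, whose hypotheses are exactly (b) and (c). For (iii)$\implies$(iv) I would argue by contradiction: if $R_\epsilon(A_0,\dots,A_\ell)$ fails to have uniform density one, then its complement $E$ has positive upper density with respect to some F\o lner sequence in $G^\ell$, and applying (iii) to this $E$ would yield an $\mathcal R$-limit of the multicorrelation equal to $\prod_{j=0}^\ell\mu(A_j)$, contradicting the fact that every point of $E$ is at distance at least $\epsilon$ from this product. Finally, for (iv)$\implies$(i), I would specialize $A_2=\cdots=A_\ell=X$ and observe that $R_\epsilon(A_0,A_1,X,\dots,X)=R_\epsilon(A_0,A_1)\times G^{\ell-1}$; using a product F\o lner sequence built from an arbitrary F\o lner sequence in $G$, uniform density one in $G^\ell$ of the left-hand set translates to uniform density one of $R_\epsilon(A_0,A_1)$ in $G$, and weak mixing follows from \cref{6.2.EquivalentFormsOFWM}, (iii).
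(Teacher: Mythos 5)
Your implications (ii)$\implies$(iii), (iii)$\implies$(iv) and (iv)$\implies$(i) are fine and essentially match the paper (the last one the paper dismisses as trivial; your product-F{\o}lner argument is a correct way to spell it out). The genuine gap is in (i)$\implies$(ii), in the step where you first produce the sequences $(\textbf g_k^{(j)})_{k\in\N}$ inside $E$ via \cref{5.2.FiniteSumsInPositiveDensitySets} and only afterwards try to ``repair'' them by a diagonal extraction. Your key claim --- that since $R_{1/n}(A^{(n_1)},A^{(n_2)})$ has uniform density one, ``all but a density-zero subset of indices $k$'' satisfy $g^{(j)}_{k,t}\in R_{1/n}(A^{(n_1)},A^{(n_2)})$ --- is unjustified and false in general. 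Uniform density one of a subset of $G$ says nothing about how often a \emph{prescribed} sequence tending to infinity visits it: a density-zero set is typically infinite, and the coordinate sequences handed to you by the purely combinatorial \cref{5.2.FiniteSumsInPositiveDensitySets} may lie entirely inside it. Indeed, if $(T_g)_{g\in G}$ is weakly but not strongly mixing, there are $A_0,A_1$, $\epsilon>0$ and $g_k\to\infty$ with $|\mu(A_0\cap T_{g_k}A_1)-\mu(A_0)\mu(A_1)|\geq\epsilon$ for all $k$, so \emph{no} subsequence of $(g_k)$ has the mixing property; nothing prevents a coordinate sequence $(g^{(j)}_{k,t})_{k\in\N}$ from being such a sequence. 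The principle you invoke (every sequence going to infinity admits a mixing subsequence) is exactly strong mixing, not weak mixing, so the diagonal extraction cannot get off the ground.

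The fix is to build the dynamical constraint into the construction from the start rather than refining afterwards, which is what the paper does. One forms the single weakly mixing $G^\ell$-action $(S_{\textbf g})_{\textbf g\in G^\ell}=(\prod_{j}T_{\pi_j(\textbf g)}\times\prod_{i\neq j}T_{(\pi_j-\pi_i)(\textbf g)})_{\textbf g\in G^\ell}$ on the $\ell^2$-fold product, uses \cref{6.2.EquivalentFormsOFWM}, (iii), to get a single set $B\subseteq G^\ell$ of upper density zero off which $S$ mixes, and then applies \cref{5.2.FiniteSumsInPositiveDensitySets} to $E\setminus B$ (still of positive upper density); for $m=1$ this immediately yields (a)--(c). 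For $m>1$ a further subtlety remains: one cannot simply quote \cref{5.2.FiniteSumsInPositiveDensitySets}, because properties (b) and (c) must hold for \emph{every} coordinate $t\in\{1,\dots,m\}$ of the generating sequences, not only for the last one. The paper handles this by taking base sequences that already have the mixing property, passing to a subsequence so that the associated $\mathcal F$-sums satisfy IP-limit mixing (formulas of the type \eqref{6.2.DiagonalizingForIP1} and \eqref{6.2.DiagonalizingForIP2}), and then re-running the positive-density construction in the style of \cref{5.2.IPPoincare}, \cref{5.2.FiniteSumsInPositiveDensitySets} and \cref{5.1.SigmaInEveryIP} along that IP set, so that all coordinate sequences produced inherit the mixing property. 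In short, the construction of the $\tilde\Sigma_m$ configuration inside $E$ and the mixing requirements have to be interleaved; your two-stage ``construct, then refine by subsequence'' scheme cannot work under weak mixing alone.
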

\begin{proof}
(i)$\implies$(ii): For each $j\in\{1,...,\ell\}$, let $\pi_j:G^\ell\rightarrow G$ be defined by $\pi_j(g_1,...,g_\ell)=g_j$. Note that $(T_{\pi_j(\textbf g)})_{\textbf g\in G^\ell}$ is a weakly mixing action and  for any $i\neq j$, $(T_{(\pi_j-\pi_i)(\textbf g)})_{\textbf g\in G^\ell}$ is also weakly mixing. Moreover (see \Cref{6.2.ProductOfWeaklyMixingIsWeaklyMixing}), 
$$(S_{\textbf g})_{\textbf g\in G^\ell}=(\prod_{j=1}^\ell T_{\pi_j(\textbf g)}\times\prod_{i\neq j}T_{(\pi_j-\pi_i)(\textbf g)})_{\textbf g\in G^\ell}$$
is a weakly mixing $G^\ell$-action on the probability space $$(X^{\ell^2},\bigotimes_{j=1}^{\ell^2}\mathcal A,\nu),$$ where 
$\nu=\underbrace{\mu\times\cdots\times \mu}_{\ell^2\text{ times}}$.\\
By \cref{6.2.EquivalentFormsOFWM}, (iii), there exists a set $B\subseteq G^\ell$ with $\overline d_{(F_k)}(B)=0$ such that for any $A_0,A_1\in\bigotimes_{j=1}^{\ell^2}\mathcal A$, 
\begin{equation}\label{6.2.BigProductIsWeaklyMixing}
\lim_{\textbf g\rightarrow\infty,\,\textbf g\not\in B}\nu(A_0\cap S_{\textbf g}A_1)=\nu(A_0)\nu(A_1).
\end{equation}
We start with proving  (ii) for  $m=1$. Let $E\subseteq G^\ell$ with $\overline d_{(F_k)}(E)>0$. By \cref{5.2.FiniteSumsInPositiveDensitySets} (applied to $d=\ell$, $m=1$ and  the set
$(E\setminus B)\subseteq G^\ell$) there exist non-degenerated and essentially distinct sequences $(g_k^{(1)})_{k\in\N}$,...,$(g_k^{(\ell)})_{k\in\N}$ in $G$ with the property that for each $k\in\N$, $\textbf g_k=(g_k^{(1)},...,g_k^{(\ell)})\in E\setminus B$. It follows now from \eqref{6.2.BigProductIsWeaklyMixing} that  $(S_{\textbf g_k})_{k\in\N}$ has the mixing property and hence  for any $j\in\{1,...,\ell\}$, $(T_{g_k^{(j)}})_{k\in\N}$ has the mixing property and for any $i\neq j$, $(T_{g_k^{(j)}-g_k^{(i)}})_{k\in\N}$ has the mixing property as well.\\ 

Assume now that $m>1$. Let $(g_k^{(1)})_{k\in\N}$,...,$(g_k^{(\ell)})_{k\in\N}$ be non-degenerated and essentially distinct sequences in $G$ such that for any distinct $i,j\in\{1,...,\ell\}$, $(T_{g_k^{(j)}})_{k\in\N}$ and $(T_{g_k^{(j)}-g_k^{(i)}})_{k\in\N}$ have the mixing property. Let $(\textbf h_k)_{k\in\N}=(h^{(1)}_k,...,h^{(\ell)}_k)_{k\in\N}$ be a subsequence of $(g^{(1)}_k,...,g_k^{(\ell)})_{k\in\N}$ such that for  any $i,j\in\{1,...,\ell\}$,
\begin{equation}\label{6.2.hkGoingToInfty}
\lim_{\alpha\rightarrow\infty}h^{(j)}_\alpha=\infty\text{ and if }i\neq j,\,\lim_{\alpha\rightarrow\infty}(h^{(j)}_\alpha-h^{(i)}_\alpha)=\infty.
\end{equation}
Observe that, by \eqref{6.2.hkGoingToInfty}, $\{(h^{(1)}_\alpha,...,h^{(\ell)}_\alpha)\,|\,\alpha\in\mathcal F\}$ is an \text{\rm{$\tilde{\text{IP}}$}} set. It follows from our choice of $(g_k^{(1)})_{k\in\N}$,...,$(g_k^{(\ell)})_{k\in\N}$, that for any $M\in\N$, any non-empty set $\alpha\subseteq\{1,...,M\}$, any $A_0,A_1\in\mathcal A$ and any $j\in\{1,...,\ell\}$,
\begin{equation}\label{6.2.DiagonalizingForIP1}
\lim_{k\rightarrow\infty}\mu(T_{-h^{(j)}_\alpha}A_0\cap T_{h^{(j)}_k}A_1)=\mu(A_0)\mu(A_1),
\end{equation}
and for any  $i\neq j$,
\begin{equation}\label{6.2.DiagonalizingForIP2}
\lim_{k\rightarrow\infty}\mu(T_{-(h^{(j)}_\alpha-h^{(i)}_\alpha)}A_0\cap T_{h^{(j)}_k-h^{(i)}_k}A_1)=\mu(A_0)\mu(A_1).
\end{equation}
Passing, if needed, to a subsequence of $(\textbf h_k)_{k\in\N}$, we can derive now from \eqref{6.2.DiagonalizingForIP1} and \eqref{6.2.DiagonalizingForIP2} the following equations
\begin{equation*}\label{6.2.IPlimitForWeaklyMixing}
\mathop{\text{\rm{IP-lim}}}_{\alpha\in\mathcal F}\mu(A_0\cap T_{h^{(j)}_\alpha}A_1)=\mu(A_0)\mu(A_1),
\end{equation*}
and if $i\neq j$,
\begin{equation*}
\mathop{\text{\rm{IP-lim}}}_{\alpha\in\mathcal F}\mu(A_0\cap T_{h^{(j)}_\alpha-h^{(i)}_\alpha}A_1)=\mu(A_0)\mu(A_1).
\end{equation*}
We can conclude now the proof of (i)$\implies$(ii) by arguing as in the proof of \cref{5.2.FiniteSumsInPositiveDensitySets} and imitating the constructions in the proofs of \cref{5.2.IPPoincare} and \cref{5.1.SigmaInEveryIP}.\\  
(ii)$\implies$(iii): This follows from \cref{2.MainResult}.\\
(iii)$\implies$(iv): Let $E=G^\ell\setminus R_\epsilon(A_0,...,A_\ell)$. It suffices to show that for any F{\o}lner sequence $(F_k)_{k\in\N}$ in $G^\ell$, $\overline d_{(F_k)}(E)=0$. To see this, note that  if this was not the case, (iii) would imply that $E\cap R_\epsilon(A_0,...,A_\ell)\neq \emptyset$, a contradiction.\\
(iv)$\implies$(i): This implication is trivial and is omitted. 
\end{proof}
We conclude this section with a corollary of \cref{6.2.GlobalWeaklyMixing} which has diagonal nature (This corollary can also be obtained   from the main result in \cite{BerRosJointErgodicity}). 
\begin{cor}\label{6.2.DiagonalDensityResult}
Let $(G,+)$ be a countable abelian group, let $(X,\mathcal A,\mu,(T_g)_{g\in G})$ be a measure preserving system and let $\phi_1,...,\phi_\ell:G\rightarrow G$ be homomorphisms with the property that for any $j\in\{1,...,\ell\}$,  $(T_{\phi_j(g)})_{g\in G}$ is weakly mixing and  for any $i\neq j$,  $(T_{(\phi_j-\phi_i)(g)})_{g\in G}$ is also weakly mixing. For any F{\o}lner sequence $(F_k)_{k\in\N}$ in $G$ and any $A_0,...,A_\ell\in\mathcal A$,
\begin{equation}\label{6.2.DensityLimit2}
\lim_{k\rightarrow\infty}\frac{1}{|F_k|}\sum_{g\in F_k}|\mu(A_0\cap T_{\phi_1(g)}A_1\cap\cdots\cap T_{\phi_\ell(g)}A_\ell)-\prod_{j=0}^\ell \mu(A_j)|=0.
\end{equation}
\end{cor}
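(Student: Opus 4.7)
The plan is to deduce the claimed limit from \cref{6.2.LimitEquivalenceWM}. Fix $A_0,\ldots,A_\ell\in\mathcal A$ and a F{\o}lner sequence $(F_k)_{k\in\N}$ in $G$, and set $x_g=\mu(A_0\cap T_{\phi_1(g)}A_1\cap\cdots\cap T_{\phi_\ell(g)}A_\ell)$ and $x_0=\prod_{j=0}^\ell\mu(A_j)$. Taking $E=G$ in \cref{6.2.LimitEquivalenceWM} reduces the problem to showing that for every $D\subseteq G$ with $\overline d_{(F_k)}(D)>0$ there exist an $m\in\N$ and a sequence $(h_{k,1},\ldots,h_{k,m})_{k\in\N}$ in $G^m$ with $\{h_\alpha:\alpha\in\N^{(m)}\}\subseteq D$ and $\rlim{\alpha\in\N^{(m)}}x_{h_\alpha}=x_0$; I will take $m=\ell$.

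Fix such a $D$. Applying \cref{6.2.EquivalentFormsOFWM}, (iii), to each of the finitely many weakly mixing $G$-actions $(T_{\phi_j(g)})_{g\in G}$ and $(T_{(\phi_j-\phi_i)(g)})_{g\in G}$ --- with a routine diagonalization against a countable dense subalgebra of $\mathcal A$ to make the exceptional set uniform in the test sets --- and taking the finite union of the resulting sets, I obtain $B\subseteq G$ with $\overline d_{(F_k)}(B)=0$ such that every sequence $(g_k)_{k\in\N}$ in $G\setminus B$ with $g_k\to\infty$ produces mixing sequences $(T_{\phi_j(g_k)})_{k\in\N}$ and $(T_{(\phi_j-\phi_i)(g_k)})_{k\in\N}$ for all $j\in\{1,\ldots,\ell\}$ and all $i\neq j$. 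Since $B$ has density zero, a greedy induction --- at stage $k+1$ pick $h_{k+1}$ large and outside the density-zero set $B\cup\bigcup_{\alpha\subseteq\{1,\ldots,k\}}(B-h_\alpha)$ --- furnishes an IP set $D_0=\{h_\alpha:\alpha\in\mathcal F\}\subseteq G\setminus B$.

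Apply \cref{5.2.IPPoincare} with the positive-density set $D$ and the $\tilde{\mathrm{IP}}$ set $D_0\subseteq G\setminus B$ to pass to a sub-IP set still inside $G\setminus B$, generated by a sequence I continue to call $(h_k)_{k\in\N}$, with $\overline d_{(F_k)}\bigl(\bigcap_{\emptyset\neq\beta\subseteq\alpha}(D-h_\beta)\bigr)>0$ for every $\alpha\in\mathcal F$. Set $f_{k,t}=h_k$ for $t\in\{1,\ldots,\ell-1\}$, as in the proof of \cref{5.1.SigmaInEveryIP}; then every $\tilde\Sigma_{\ell-1}$-sum $f_{\{k_1,\ldots,k_{\ell-1}\}}=h_{k_1}+\cdots+h_{k_{\ell-1}}$ lies in $D_0\subseteq G\setminus B$. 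Imitating the inductive step in the proof of \cref{5.2.FiniteSumsInPositiveDensitySets}, for each $k$ choose $h_{k,\ell}\in E_k\cap(G\setminus B)$, where $E_k=\bigcap_{|\beta|=\ell-1,\,\beta\subseteq\{1,\ldots,k+\ell-1\}}(D-f_\beta)$, and arrange $h_{k,\ell}\to\infty$; the intersection with $G\setminus B$ retains positive density because $\overline d_{(F_k)}(B)=0$. Letting $h_{k,t}=f_{k,t}$ for $t<\ell$, the resulting sequence $(h_{k,1},\ldots,h_{k,\ell})_{k\in\N}$ has all entries in $G\setminus B$, each coordinate sequence tending to infinity, and $h_\alpha\in D$ for every $\alpha\in\N^{(\ell)}$.

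Finally, set $\textbf g_k^{(j)}=(\phi_j(h_{k,1}),\ldots,\phi_j(h_{k,\ell}))$ for $j\in\{1,\ldots,\ell\}$. By construction, each of $(T_{\phi_j(h_{k,t})})_{k\in\N}$ and $(T_{(\phi_j-\phi_i)(h_{k,t})})_{k\in\N}$ has the mixing property, so \cref{2.MainResult} yields an infinite $S\subseteq\N$ with $\rlim{\alpha\in S^{(\ell)}}\mu(A_0\cap T_{g_\alpha^{(1)}}A_1\cap\cdots\cap T_{g_\alpha^{(\ell)}}A_\ell)=\prod_{j=0}^\ell\mu(A_j)$; using $g_\alpha^{(j)}=\phi_j(h_\alpha)$ and $h_\alpha\in D$ for $\alpha\in S^{(\ell)}$, re-enumerating $S$ as $\N$ supplies the sequence demanded by \cref{6.2.LimitEquivalenceWM}. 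The main obstacle is the combinatorial construction in the third paragraph --- producing a $\tilde\Sigma_\ell$-array whose individual entries all avoid the mixing-obstruction set $B$ while the diagonal sums $h_\alpha$ land in $D$ --- which is managed by exploiting the density-zero property of $B$ to intersect freely with the positive-density sets furnished by \cref{5.2.IPPoincare}.
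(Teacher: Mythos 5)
Your proof is correct, and its skeleton coincides with the paper's: reduce via \cref{6.2.LimitEquivalenceWM} (with $E=G$), produce inside an arbitrary positive-density set $D$ an $\ell$-fold array whose sums lie in $D$ and whose individual entries generate mixing sequences for all the relevant actions, and finish with \cref{2.MainResult}. The one place where you diverge is the middle step. The paper gets the array in a single stroke by applying \cref{6.2.GlobalWeaklyMixing}, (ii) (with the roles of $\ell$ and $m$ there being $1$ and $\ell$) to the product action $(S_g)_{g\in G}=(\prod_{j=1}^\ell T_{\phi_j(g)}\times\prod_{i\neq j}T_{(\phi_j-\phi_i)(g)})_{g\in G}$, which is weakly mixing by \Cref{6.2.ProductOfWeaklyMixingIsWeaklyMixing}; mixing of $(S_{g_{k,t}})_{k\in\N}$ then automatically gives mixing of every coordinate and difference sequence. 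You instead re-derive that special case of \cref{6.2.GlobalWeaklyMixing}, (i)$\implies$(ii) by hand: separate density-zero exceptional sets from \cref{6.2.EquivalentFormsOFWM}, (iii) for each of the finitely many actions $(T_{\phi_j(g)})_{g\in G}$, $(T_{(\phi_j-\phi_i)(g)})_{g\in G}$, a greedy IP set inside $G\setminus B$, then \cref{5.2.IPPoincare} and the inductive construction from \cref{5.2.FiniteSumsInPositiveDensitySets}, with the harmless extra constraint of avoiding the density-zero set $B$ at each choice. This is exactly the machinery the paper uses to prove \cref{6.2.GlobalWeaklyMixing} itself, so nothing new is needed and all steps check out (in particular $E_k\setminus B$ keeps positive density, the generators of the sub-IP set stay in $G\setminus B$, and re-enumerating the set $S$ from \cref{2.MainResult} as $\N$ meets the requirement of \cref{6.2.LimitEquivalenceWM}, (ii)). What the paper's packaging buys is brevity; what your unpacked version buys is that it bypasses the product-system trick and makes the role of the density-zero obstruction set explicit.
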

\begin{proof}
By \cref{6.2.LimitEquivalenceWM}, in order to prove \eqref{6.2.DensityLimit2}, it suffices to show that for any $E\subseteq G$ with $\overline d_{(F_k)}(E)>0$, there exists a non-degenerated sequence $(\textbf g_k)_{k\in\N}=(g_{k,1},...,g_{k,\ell})_{k\in\N}$ in $G^\ell$ with $\{g_\alpha\,|\,\alpha\in\N^{(\ell)}\}\subseteq E$ such that 
\begin{equation}\label{6.2.PreDensityDiagonalLimit}
\rlim{\alpha\in\N^{(\ell)}}\mu(A_0\cap T_{\phi_1(g_\alpha)}A_1\cap\cdots\cap T_{\phi_\ell(g_\alpha)}A_\ell)=\prod_{j=0}^\ell \mu(A_j).
\end{equation}
By  \cref{6.2.GlobalWeaklyMixing}, (ii), applied to the weakly mixing $G$-action
$$(S_g)_{g\in G}=(\prod_{j=1}^\ell T_{\phi_j(g)}\times\prod_{i\neq j}T_{(\phi_j-\phi_i)(g)})_{g\in G},$$ there exists a non-degenerated sequence $(g_{k,1},...,g_{k,\ell})_{k\in\N}$ in $G$, with $\{g_\alpha\,|\,\alpha\in\N^{(\ell)}\}\subseteq E$, and such that for any $t\in\{1,...,\ell\}$, the sequence $(S_{g_{k,t}})_{k\in\N}$ has the mixing property. It follows that for any $t\in\{1,...,\ell\}$ and any $j\in\{1,...,\ell\}$, $(T_{\phi_j(g_{k,t})})_{k\in\N}$ has the mixing property and for any $t$ and $i\neq j$, $(T_{(\phi_j-\phi_i)(g_{k,t})})_{k\in\N}$ has the mixing property as well.  The result now follows from  \cref{2.MainResult}.
\end{proof}
\begin{rem}
Taking in \cref{6.2.DiagonalDensityResult} $G=\Z$, one obtains the following classical result due to Furstenberg (Cf. Theorem 4.11 in \cite{FBook}):\\
\begin{adjustwidth}{0.5cm}{0.5cm}
\textit{For any weakly mixing system $(X,\mathcal A,\mu, T)$, any non-zero and distinct integers  $a_1,...,a_\ell$  and any $A_0,...,A_\ell\in\mathcal A$,
$$\lim_{N-M\rightarrow\infty}\frac{1}{N-M}\sum_{n=M+1}^N|\mu(A_0\cap T^{a_1n}A_1\cap \cdots\cap T^{a_\ell n}A_\ell)-\prod_{j=0}^\ell\mu(A_j)|=0.$$}
\end{adjustwidth}
\end{rem}
\bibliography{Bib.bib}
\bibliographystyle{plain}

\bigskip
\footnotesize

\noindent
Vitaly Bergelson\\
\textsc{Department of Mathematics, The Ohio State University, Columbus, OH 43210, USA}\par\nopagebreak
\noindent
\href{mailto:vitaly@math.ohio-state.edu}
{\texttt{vitaly@math.ohio-state.edu}}

\medskip

\noindent
Rigoberto Zelada\\
\textsc{Department of Mathematics, The Ohio State University, Columbus, OH 43210, USA}\par\nopagebreak
\noindent
\href{mailto:zeladacifuentes.1@osu.edu}
{\texttt{zeladacifuentes.1@osu.edu}}
\end{document}